\theoremstyle{definition}
\newtheorem{theorem}{Theorem}
\newtheorem{definition}[theorem]{Definition}
\newtheorem{conjecture}[theorem]{Conjecture}
\newtheorem{lemma}[theorem]{Lemma}
\newtheorem{proposition}[theorem]{Proposition}
\newtheorem{corollary}[theorem]{Corollary}
\newtheorem*{theorem*}{Theorem}
\theoremstyle{remark}
\newtheorem{example}[theorem]{Example}
\newtheorem*{namedtheorem}{Theorem}
\definecolor{darkgreen}{rgb}{0.0, 0.5, 0.0}
\newcommand{\com}[1]{\textcolor{blue}{[$\star$ #1\ $\star$]}}
\newcommand{\avecom}[1]{\textcolor{red}{[$\star$ #1\ $\star$]}}
\newcommand{\jcom}[1]{\textcolor{purple}{[$\star$ #1\ $\star$]}}
\newcommand{\tojunecom}[1]{\textcolor{darkgreen}{[$\star$ #1\ $\star$]}}
\def\S{\mathfrak{S}}
\def\G{\mathfrak{G}}
\def\Z{\mathbb{Z}}
\def\x{{\bf x}}
\DeclareMathOperator{\wt}{wt}
\DeclareMathOperator{\GL}{GL}
\newcommand{\C}{\mathbb{C}}
\newcommand{\R}{\mathbb{R}}
\newcommand{\fD}{\mathfrak{D}}
\newcommand{\fsl}{\mathfrak{sl}}
\newcommand{\cL}{\mathcal{L}}
\title{Logarithmic Concavity of Schur and related polynomials}
\author{June Huh}
\address{School of Mathematics, Institute for Advanced Study, Princeton, NJ.}
\email{junehuh@ias.edu}
\author{Jacob P. Matherne}
\address{School of Mathematics, Institute for Advanced Study, Princeton, NJ.}
\email{matherne@math.ias.edu}
\author{Karola M\'esz\'aros}
\address{Cornell University, Ithaca, NY, and Institute for Advanced Study, Princeton, NJ.}
\email{karola@math.cornell.edu}
\author{Avery St.~Dizier}
\address{Department of Mathematics, Cornell University, Ithaca, NY.}
\email{ajs624@cornell.edu}
\thanks{June Huh received support from NSF Grant DMS-1638352 and the Ellentuck Fund. 
Jacob Matherne received support from NSF Grant DMS-1638352 and the Association of Members of the Institute for Advanced Study.  
Karola M\'esz\'aros received support from NSF Grant DMS-1501059, CAREER NSF Grant DMS-1847284
and  a von Neumann Fellowship funded by the Friends of the Institute for Advanced Study.}
\begin{document}

\maketitle

\begin{abstract} 
We show that  normalized Schur polynomials are strongly log-concave. 
As a consequence, we obtain Okounkov's log-concavity conjecture for Littlewood--Richardson coefficients in the special case of Kostka numbers. 
\end{abstract}

\section{Introduction}

Schur polynomials are the characters of finite-dimensional irreducible polynomial representations of the general linear group $\mathrm{GL}_m(\mathbb{C})$.
Combinatorially, the \emph{Schur polynomial} of a partition $\lambda$ in $m$ variables is the generating function
\[
s_\lambda(x_1,\ldots,x_m)= \sum_{\mathrm{T}} \ x^{\mu(\mathrm{T})}, \quad x^{\mu(\mathrm{T})}= x_1^{\mu_1(\mathrm{T})} \cdots x_m^{\mu_m(\mathrm{T})},
\] 
where the sum is over all Young tableaux $\mathrm{T}$ of shape $\lambda$ with entries from $[m]$,
and 
\[
\mu_i(\mathrm{T})=\text{the number of $i$'s among the entries of $\mathrm{T}$}, \ \  \text{for $i=1,\ldots,m$.}
\]
Collecting Young tableaux of the same weight together, we get
\[
s_\lambda(x_1,\ldots,x_m)= \sum_{\mu} K_{\lambda\mu}\hspace{0.3mm} x^{\mu},
\] 
where $K_{\lambda\mu}$ is the \emph{Kostka number} counting  Young tableaux of given shape $\lambda$ and  weight $\mu$ \cite{Kostka}.
Correspondingly, the \emph{Schur module} $\mathrm{V}(\lambda)$,   an irreducible representation of  the general linear group with highest weight $\lambda$, has the weight space decomposition
\[
\mathrm{V}(\lambda)=\bigoplus_\mu \mathrm{V}(\lambda)_\mu \ \ \text{with} \ \  \dim \mathrm{V}(\lambda)_\mu=K_{\lambda\mu}.
\]
Schur polynomials were first studied by Cauchy  \cite{Cauchy}, who defined them as ratios of alternants.
The connection to the representation theory of $\mathrm{GL}_m(\mathbb{C})$ was found by Schur \cite{Schur}.
For a gentle introduction to these remarkable polynomials, and for all undefined terms, we refer to \cite{FultonYoung}.

We prove several log-concavity properties of Schur polynomials.
An operator that turns generating functions into exponential generating functions will play an important role.
This linear operator, denoted $\mathrm{N}$, is defined by the condition
\[
\mathrm{N}(x^\mu)=\frac{x^\mu}{\mu!}=\frac{x_1^{\mu_1}}{\mu_1!} \cdots \frac{x_m^{\mu_m}}{\mu_m!} \ \ \text{for all $\mu \in \mathbb{N}^m$}.
\]
Recall that a \emph{partition} is a weakly decreasing sequence of nonnegative integers.

\begin{theorem}[Continuous]\label{Continuous}
For any partition $\lambda$, the normalized Schur polynomial 
\[
\mathrm{N}(s_{\lambda}(x_1,\ldots,x_m)) = \sum_{\mu} K_{\lambda\mu}  \frac{x^\mu}{\mu!}
\]
is  either identically zero or its logarithm is concave on the positive orthant $\mathbb{R}^m_{>0}$.
\end{theorem}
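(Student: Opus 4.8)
The plan is to deduce Theorem~\ref{Continuous} from the stronger assertion that the polynomial $\mathrm{N}(s_\lambda)$ is \emph{Lorentzian} in the sense of Br\"and\'en--Huh. If $\lambda$ has more than $m$ parts then $s_\lambda(x_1,\dots,x_m)=0$ and there is nothing to prove, so assume $\ell(\lambda)\le m$; then $\mathrm{N}(s_\lambda)$ is a nonzero homogeneous polynomial of degree $d=|\lambda|$ with nonnegative coefficients, hence strictly positive on $\mathbb{R}^m_{>0}$. Granting that it is Lorentzian, Theorem~\ref{Continuous} follows from the general fact that a nonzero Lorentzian polynomial $f$ has concave logarithm on the positive orthant (equivalently, $f^{1/d}$ is concave on the nonnegative orthant for $d=\deg f$, and $\log \mathrm{N}(s_\lambda)=d\log\bigl(\mathrm{N}(s_\lambda)^{1/d}\bigr)$ with $\log$ concave and increasing). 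So the entire problem is to show that $\mathrm{N}(s_\lambda)$ is Lorentzian.

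Recall that a homogeneous polynomial $f$ of degree $d$ with nonnegative coefficients is Lorentzian precisely when (i) its support is an M-convex subset of $\mathbb{N}^m$, and (ii) for every $\beta$ with $|\beta|=d-2$ the quadratic form $\partial^\beta f$ has a Hessian with at most one positive eigenvalue. Condition (i) is the easy half here: $\mathrm{supp}(s_\lambda)=\{\mu: K_{\lambda\mu}>0\}$ is exactly the set of lattice points of the permutohedron $\conv(\mathfrak{S}_m\cdot\lambda)$, a generalized permutohedron, and lattice points of a generalized permutohedron form an M-convex set; moreover M-convexity is preserved when one passes from $\mathrm{supp}(g)$ to $\mathrm{supp}(\partial_i g)$, so all the supports arising below are automatically M-convex. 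Thus the genuine content is the Hessian signature condition (ii).

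For this I would argue by induction on $d=|\lambda|$ using the recursive criterion of Br\"and\'en--Huh: a homogeneous polynomial $f$ of degree $d\ge 2$ with nonnegative coefficients is Lorentzian if and only if $\mathrm{supp}(f)$ is M-convex and $\partial_i f$ is Lorentzian for every $i$. The base cases $d\le 2$ are immediate, since $\mathrm{N}(s_\lambda)$ is then a monomial, $\mathrm{N}(e_2)=e_2$, or $\mathrm{N}(h_2)=(x_1+\dots+x_m)^2/2$, all visibly Lorentzian. For the inductive step one must control $\partial_i\mathrm{N}(s_\lambda)$. Because $\mathrm{N}$ replaces a generating function by its exponential generating function, $\partial_i$ corresponds to a lowering of the $x_i$-degree, and the branching rule for Schur polynomials yields
\[
\partial_m\mathrm{N}(s_\lambda)=\sum_\mu \mathrm{N}\bigl(s_\mu(x_1,\dots,x_{m-1})\bigr)\cdot\frac{x_m^{\,|\lambda/\mu|-1}}{(|\lambda/\mu|-1)!},
\]
the sum being over partitions $\mu$ with $\lambda/\mu$ a horizontal strip of size $\ge 1$, and the remaining $\partial_i$ obtained by symmetry in $x_1,\dots,x_m$.

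The main obstacle lies exactly here: the right-hand side is a \emph{sum} of products of normalized (flagged, skew) Schur polynomials with powers of a single variable, and Lorentzian polynomials are not closed under addition, so one cannot conclude from an inductive hypothesis about Schur polynomials alone. The remedy is to enlarge the class over which one inducts to one that contains $\mathrm{N}(s_\lambda)$, is closed under $f\mapsto\partial_i f$ modulo the operations that preserve the Lorentzian property (multiplication by powers of linear forms, and setting variables equal to positive constants or to one another), and is small enough that its degree-two members can be checked by hand; the normalized skew and flagged skew Schur polynomials, together with the strip-sums above, are natural candidates, and the ``related polynomials'' of the title (e.g.\ Schubert polynomials) suggest that a single uniform such class exists. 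Pinning down this class --- equivalently, identifying the precise combinatorial identity governing the lowering operators and verifying the signature condition at the bottom of the recursion --- is where essentially all the difficulty resides; everything else above is formal manipulation together with an appeal to the Br\"and\'en--Huh calculus of Lorentzian polynomials.
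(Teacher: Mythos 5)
Your reduction of Theorem \ref{Continuous} to the statement that $\mathrm{N}(s_\lambda)$ is Lorentzian is correct and is exactly the reduction the paper makes (a nonzero Lorentzian polynomial is log-concave on $\mathbb{R}^m_{>0}$, by condition (2) of Definition \ref{LorentzianDefinition}), and your treatment of the support condition via Rado's theorem and lattice points of the permutohedron $\operatorname{conv}(\mathcal{S}_m\cdot\lambda)$ is fine. But the proposal does not prove the one thing that carries all the content: the Hessian signature condition for $\mathrm{N}(s_\lambda)$, i.e.\ Theorem \ref{MainTheorem}. Your inductive scheme via the branching rule stalls precisely where you say it does: $\partial_m \mathrm{N}(s_\lambda)$ is a \emph{sum} over horizontal strips of products $\mathrm{N}(s_\mu(x_1,\ldots,x_{m-1}))\cdot x_m^{k}/k!$, sums of Lorentzian polynomials need not be Lorentzian, and you neither exhibit the enlarged class of polynomials over which the induction is supposed to run nor verify that it is closed under the lowering operators and satisfies the degree-two base case. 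As written, the argument is a plan with its central step explicitly left open, so the proof is incomplete in an essential way; it is not clear that this induction can be closed at all within the combinatorial framework you set up.

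For comparison, the paper's proof of the Lorentzian property is not an induction on degree but an algebro-geometric identification. One first proves Theorem \ref{SchubertComplement}: for any $w\in\mathcal{S}_n$, the polynomial $\mathfrak{S}_w^\vee=\mathrm{N}(x_1^{n-1}\cdots x_n^{n-1}\mathfrak{S}_w(x_1^{-1},\ldots,x_n^{-1}))$ is the volume polynomial of the irreducible degeneracy locus $Y_w\subset(\mathbb{P}^{n-1})^n$ (whose class is $\mathfrak{S}_w(\mathrm{H}_1,\ldots,\mathrm{H}_n)$ by Fulton's formula) with respect to the nef classes $\mathrm{H}_i$, hence Lorentzian by the volume-polynomial criterion of Br\"and\'en--Huh. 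One then takes $w$ to be the Grassmannian permutation whose Schubert polynomial is $s_\kappa$ for $\kappa$ complementary to $\lambda$ in an $m\times\ell$ rectangle, so that $\mathfrak{S}_w^\vee=\mathrm{N}(x^\mu s_\lambda)$ for some monomial $x^\mu$, and finally removes the monomial factor by Lemma \ref{Translation} (the identity $\partial^\mu\circ\mathrm{N}\circ x^\mu=\mathrm{N}$ together with closure of Lorentzian polynomials under partial derivatives). If you want to salvage your approach, you would need to supply the missing closed class and base-case verification; otherwise the geometric route is the one that actually completes the argument.
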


Let $e_i$ be the $i$-th standard unit vector in $\mathbb{N}^m$. 
For $\mu \in \mathbb{Z}^m$ and distinct $i,j \in [m]$, we set
\[
\mu(i,j)=\mu+e_i-e_j.
\]
We show that the sequence of weight multiplicities of $\mathrm{V}(\lambda)$ we encounter is always log-concave if we walk in the weight diagram along any root direction $e_i-e_j$.

\begin{theorem}[Discrete]\label{Discrete}
For any partition $\lambda$ and any  $\mu \in \mathbb{N}^m$, we have
\[
K_{\lambda\mu}^2 \ge K_{\lambda \mu(i,j)} K_{\lambda\mu(j,i)} \ \ \text{for any $i,j \in [m]$.}
\]
\end{theorem}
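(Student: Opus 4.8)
The proof rests on the stronger statement that $\mathrm{N}(s_\lambda)$ is \emph{strongly} log-concave: every iterated partial derivative $\partial_1^{\gamma_1}\cdots\partial_m^{\gamma_m}\mathrm{N}(s_\lambda)$ is either identically zero or has concave logarithm on $\R^m_{>0}$. This is the form that the proof of Theorem~\ref{Continuous} actually produces (equivalently, $\mathrm{N}(s_\lambda)$ is a Lorentzian polynomial); it is needed here because plain log-concavity of a polynomial is \emph{not} inherited by its partial derivatives. Granting it, the plan is to exhibit the three relevant Kostka numbers as the coefficients of one binary quadratic form obtained from $\mathrm{N}(s_\lambda)$ by differentiation and specialization, and then to invoke the elementary fact that a binary quadratic $a\,x^2+b\,xy+c\,y^2$ with $a,b,c\ge 0$, not all zero, has concave logarithm on $\R^2_{>0}$ if and only if $b^2\ge 4ac$ --- the ``if'' because it then factors into nonnegative linear forms, the ``only if'' by applying Cauchy--Schwarz to the Hessian of its logarithm.

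Fix a partition $\lambda$, a weight $\mu\in\mathbb{N}^m$, and distinct indices, which after relabelling we take to be $1$ and $2$. If $\mu_1=0$ then $\mu(2,1)\notin\mathbb{N}^m$, so $K_{\lambda\mu(2,1)}=0$ and the asserted inequality reads $K_{\lambda\mu}^2\ge 0$; the case $\mu_2=0$ is symmetric, and if $|\mu|\ne|\lambda|$ all three Kostka numbers vanish. So assume $\mu_1,\mu_2\ge 1$ and $|\mu|=|\lambda|$. For any polynomial $f=\sum_\nu c_\nu x^\nu$ and multi-index $\gamma$ one has $\partial^\gamma\mathrm{N}(f)=\sum_{\nu\ge\gamma}c_\nu\,x^{\nu-\gamma}/(\nu-\gamma)!$, and the further substitution $x_k\mapsto 0$ for $k\ge 3$ retains exactly the terms with $\nu_k=\gamma_k$ for all $k\ge 3$. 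Applying this to $f=s_\lambda$ with $\gamma=(\mu_1-1,\mu_2-1,\mu_3,\ldots,\mu_m)$ and using $|\mu|=|\lambda|$, the surviving monomials have degree exactly $2$ in $x_1,x_2$, and reading off the three of them gives
\[
\Bigl(\partial^\gamma\mathrm{N}(s_\lambda)\Bigr)\Big|_{x_3=\cdots=x_m=0}=\tfrac{1}{2}\Bigl(K_{\lambda\mu(1,2)}\,x_1^2+2K_{\lambda\mu}\,x_1x_2+K_{\lambda\mu(2,1)}\,x_2^2\Bigr).
\]

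By strong log-concavity, $\partial^\gamma\mathrm{N}(s_\lambda)$ is zero or has concave logarithm on $\R^m_{>0}$. Restricting to the coordinate face $x_3=\cdots=x_m=0$ preserves this (the restriction is a pointwise limit of restrictions to the affine planes $x_3=\cdots=x_m=\varepsilon$, and a pointwise limit of concave functions is concave wherever the limit is finite), so the binary quadratic $Q:=K_{\lambda\mu(1,2)}\,x_1^2+2K_{\lambda\mu}\,x_1x_2+K_{\lambda\mu(2,1)}\,x_2^2$ is zero or has concave logarithm on $\R^2_{>0}$. If $Q=0$, all three Kostka numbers vanish and there is nothing to prove. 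Otherwise the elementary fact above, with $(a,b,c)=(K_{\lambda\mu(1,2)},2K_{\lambda\mu},K_{\lambda\mu(2,1)})$, yields $4K_{\lambda\mu}^2\ge 4K_{\lambda\mu(1,2)}K_{\lambda\mu(2,1)}$, which is precisely $K_{\lambda\mu}^2\ge K_{\lambda\mu(1,2)}K_{\lambda\mu(2,1)}$. This also explains the degenerate cases: if $K_{\lambda\mu}=0$ then $Q$ cannot be a nonzero form with concave logarithm, since $K_{\lambda\mu(1,2)}x_1^2+K_{\lambda\mu(2,1)}x_2^2$ with both coefficients positive is positive definite, so another Kostka number must vanish.

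The one genuine obstacle is the opening claim --- upgrading Theorem~\ref{Continuous} to the strong log-concavity of $\mathrm{N}(s_\lambda)$, so that the differentiation step is legitimate. Everything afterward is bookkeeping. That the strengthening is truly needed is visible already in two variables: $x_1^4x_2^2+x_1^2x_2^4$ has concave logarithm on $\R^2_{>0}$, but its $\partial_1^2\partial_2^2$-derivative is proportional to $x_1^2+x_2^2$, whose logarithm is not concave --- so Theorem~\ref{Continuous} alone would not license the reduction above.
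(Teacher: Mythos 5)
Your argument is correct and is essentially the paper's own proof: both deduce the inequality from the Lorentzian property of $\mathrm{N}(s_\lambda)$ (Theorem \ref{MainTheorem}) by applying the $(\mu-e_i-e_j)$-th partial derivative and extracting the discriminant inequality of the resulting quadratic in $x_i,x_j$, whose coefficients are exactly $K_{\lambda\mu(i,j)}$, $2K_{\lambda\mu}$, $K_{\lambda\mu(j,i)}$. The only difference is cosmetic: the paper finishes with the ``at most one positive eigenvalue'' formulation, Cauchy's interlacing theorem, and the nonpositivity of the $2\times 2$ principal minor, whereas you use the equivalent strong log-concavity formulation together with a limit-to-the-boundary restriction and the bivariate discriminant criterion.
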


For partitions $\nu,\kappa,\lambda$, the  \emph{Littlewood--Richardson coefficient} $c^\nu_{\kappa\lambda}$ is given by the decomposition 
\[
\mathrm{V}(\kappa) \otimes \mathrm{V}(\lambda)\simeq \bigoplus_\nu \mathrm{V}(\nu)^{\oplus \hspace{0.3mm} c^\nu_{\kappa\lambda}}.
\]
When the skew shape $\nu/\kappa$ has at most one box in each column,
$c^\nu_{\kappa\lambda}$ is the Kostka number $\mathrm{K}_{\lambda\mu}$, where $\mu=\nu-\kappa$.\footnote{The equality between the Littlewood--Richardson coefficient and the Kostka number follows from Pieri's formula 
\[
h_{\mu_1}(x_1,\ldots,x_m) \cdots h_{\mu_m}(x_1,\ldots,x_m)=\sum_\lambda K_{\lambda \mu}s_\lambda(x_1,\ldots,x_m),
\]
 where $h_{\mu_i}$ is the $\mu_i$-th complete symmetric function  \cite[Section 6.1]{FultonYoung}.
When  $\nu/\kappa$ has at most one box in each column,
the left-hand side is  the skew Schur function  $s_{\nu/\kappa}$, given by the Littlewood--Richardson rule
\[
s_{\nu/\kappa}(x_1,\ldots,x_m)=\sum_\lambda c^\nu_{\kappa \lambda} s_\lambda(x_1,\ldots,x_m).
\]
}
Conversely, for any partition $\lambda$ and any $\mu$, we have
\[
K_{\lambda\mu}=c^{\nu}_{\kappa \lambda},
\]
where $\nu$ and $\kappa$ are the partitions given by
$\nu_i=\sum_{j=i}^n \mu_j$ and $\kappa_i=\sum_{j=i+1}^n \mu_j$.
Thus Theorem \ref{Discrete} verifies a special case of Okounkov's conjecture that the discrete function 
\[
(\nu,\kappa,\lambda) \longmapsto \log c^\nu_{\kappa\lambda}
\]
 is concave  \cite[Conjecture 1]{OkounkovWhy}.\footnote{
 The conjecture holds in the ``classical limit'' \cite[Section 3]{OkounkovWhy}, but
 the general case is refuted in \cite{CHJ}: 
 \[
 c^{(4^n, 3^{n}, 2^n, 1^n)}_{(3^n,2^n,1^n)(2^n,1^n,1^n)}={n+2 \choose 2}  \ \ \text{and} \ \   c^{(8^n, 6^{n}, 4^n, 2^n)}_{(6^n,4^n,2^n)(4^n,2^n,2^n)}={n+5 \choose 5} \ \ \text{for all $n$}.
 \]
 The same example shows that the log-concavity conjecture for \emph{parabolic Kostka numbers} \cite[Conjecture 6.17]{Kirillov} also fails.
 }

We point out that, for any fixed $\lambda$, the log-concavity of $K_{\lambda\mu}$ along \emph{any} direction is known to hold \emph{asymptotically}.
By \cite{Heckman}, the \emph{Duistermaat--Heckman measure} obtained from the orbit of $\lambda$ under $\mathrm{SU}_m$  is a translate of the weak limit 
\[
\lim_{k \to \infty} \frac{\sum_\mu K_{k\lambda\hspace{0.3mm} \mu}\delta_{\frac{1}{k}\mu}}{\sum_\mu K_{k\lambda\hspace{0.3mm} \mu}},
\]
where  $\delta_{\frac{1}{k}\mu}$ is the point mass at $\frac{1}{k}\mu$.
It follows from \cite{Graham} that, in this case, the density function of the Duistermaat--Heckman measure is log-concave.\footnote{Let $(\mathrm{M},\omega)$ be a symplectic manifold of dimension $2n$ with an action of a torus $T$ and a moment map $\mathrm{M} \to \mathfrak{t}^*$.
The Duistermaat--Heckman measure is the push-forward of the Liouville measure $\int \omega^n$ via the moment map.
In this generality, Karshon shows that the density function need not be log-concave \cite{Karshon}.}
We refer to  \cite[Section 3]{BGR} for an exposition.

In \cite{Lorentzian}, the authors introduce Lorentzian polynomials as a generalization of volume polynomials in algebraic geometry and stable polynomials in optimization theory.
See Section \ref{SectionMain} for a brief introduction.
We show that normalized Schur polynomials are Lorentzian in the sense of \cite{Lorentzian},
and deduce Theorems \ref{Continuous} and \ref{Discrete} from the Lorentzian property.

\begin{theorem}\label{MainTheorem}
The normalized Schur polynomial $\mathrm{N}(s_{\lambda}(x_1,\ldots,x_m))$ is Lorentzian for any  $\lambda$.
\end{theorem}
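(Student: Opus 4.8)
The plan is to show that $\mathrm{N}(s_\lambda)$ satisfies the defining conditions of a Lorentzian polynomial: it is a homogeneous polynomial with nonnegative coefficients whose support is $\mathrm{M}$-convex (the exponent set is the set of lattice points of a generalized permutohedron), and all of whose second-order partial derivatives of order $\deg - 2$ have Hessians with at most one positive eigenvalue. Since $s_\lambda$ is homogeneous of degree $|\lambda|$, so is $\mathrm{N}(s_\lambda)$, and the coefficients $K_{\lambda\mu}/\mu!$ are manifestly nonnegative, so the content is in the support condition and the signature-of-Hessian condition. For the support: the exponents $\mu$ with $K_{\lambda\mu}>0$ are exactly the compositions of $|\lambda|$ that are dominated by $\lambda$ in dominance order after sorting, and this set is well known to be the set of lattice points of the permutohedron $\mathrm{P}(\lambda)$ (the convex hull of the $\mathfrak{S}_m$-orbit of $\lambda$), which is a generalized permutohedron; hence the support is $\mathrm{M}$-convex. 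I would cite the standard characterization of Kostka-positivity via dominance and the fact that permutohedra are generalized permutohedra, both recorded in \cite{Lorentzian} or its references.

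The heart of the argument is the Hessian condition. The key structural input is that the space of Lorentzian polynomials is closed under the differential operators $\partial/\partial x_i$ and, more importantly, is preserved under certain linear changes of variables and under taking products; \cite{Lorentzian} shows in particular that it suffices to verify the Hessian criterion, and that normalization interacts well with the relevant operations. Concretely, I would argue by induction on $m$ and on $|\lambda|$. The base cases ($m \le 2$, or $\lambda$ a single row/column) are direct: for $m=2$ the polynomial is $\sum_{k} x_1^{\lambda_1-k}x_2^{\lambda_2+k}/((\lambda_1-k)!(\lambda_2+k)!)$ over the allowed range, and log-concavity of a bivariate homogeneous polynomial with nonnegative coefficients along the segment is equivalent to log-concavity of the coefficient sequence after dividing by binomials — which here reduces to the elementary fact $K_{\lambda\mu}^2 \ge K_{\lambda\mu(1,2)}K_{\lambda\mu(2,1)}$ in two variables, provable by an explicit bijection or a short tableau argument. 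For the inductive step I would use the branching rule $s_\lambda(x_1,\dots,x_m) = \sum_{\nu} s_\nu(x_1,\dots,x_{m-1})\, x_m^{|\lambda|-|\nu|}$, where $\nu$ runs over partitions interlacing $\lambda$; applying $\mathrm{N}$ and setting $x_m$ to a positive value reduces the derivative-Hessian check to a statement about combinations of normalized Schur polynomials in fewer variables, which one hopes to package as a statement that the relevant operator preserves the Lorentzian cone.

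I expect the main obstacle to be exactly this inductive packaging: the branching rule expresses $\mathrm{N}(s_\lambda)$ not as a single normalized Schur polynomial in fewer variables but as a weighted sum of them, and sums of Lorentzian polynomials are \emph{not} Lorentzian in general, so one cannot simply invoke induction termwise. The resolution I would pursue is to identify the branching operation with a composition of operations known to preserve the Lorentzian property — differentiation, multiplication by a linear form with nonnegative coefficients, and the specialization/contraction operations catalogued in \cite{Lorentzian} — perhaps realizing $\mathrm{N}(s_\lambda)$ via a Lindström--Gessel--Viennot / flagged-tableau description that exhibits it as $\mathrm{N}$ applied to a product of Schur polynomials or to a specialization of a volume polynomial of a variety (e.g. a Schubert variety or a string polytope), for which the Lorentzian property is already available. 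Getting the normalization operator $\mathrm{N}$ to commute appropriately with these operations — $\mathrm{N}$ does not commute with multiplication — will require care, and is where I anticipate the real work lies; one likely needs the fact from \cite{Lorentzian} that $\mathrm{N}$ carries the class of polynomials with $\mathrm{M}$-convex support and "denormalized Lorentzian" signature into Lorentzian polynomials, so that the product structure can be handled before normalizing.
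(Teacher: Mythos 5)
There is a genuine gap. The core of your argument --- the induction on $m$ via the branching rule $s_\lambda(x_1,\ldots,x_m)=\sum_\nu s_\nu(x_1,\ldots,x_{m-1})x_m^{|\lambda|-|\nu|}$ --- does not go through, for exactly the reason you yourself flag: the class of Lorentzian polynomials is not closed under taking sums, so the Hessian condition for $\mathrm{N}(s_\lambda)$ cannot be deduced termwise from the Lorentzian property of the $\mathrm{N}(s_\nu)$, and you never supply the ``packaging'' of the branching step as a composition of Lorentzian-preserving operations. No such packaging is known, and the paper does not verify the eigenvalue condition by any direct or inductive computation either. Your correct observations (nonnegativity, $\mathrm{M}$-convexity of the support as the lattice points of the permutohedron of $\lambda$, the $m=2$ case reducing to $K_{\lambda\mu}^2\ge K_{\lambda\mu(1,2)}K_{\lambda\mu(2,1)}$ with Kostka numbers $0$ or $1$) are the easy parts; the quadratic-form condition is the entire content of the theorem, and as written your proposal leaves it unproven.

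Your fallback --- realize $\mathrm{N}(s_\lambda)$ as a volume polynomial of a variety --- is in fact the route the paper takes, but naming it is not the same as executing it, and the execution is where all the work lies. Concretely, one needs: (i) the complementation step, replacing $\lambda$ by the partition $\kappa$ complementary to it in an $m\times\ell$ rectangle, so that $x_1^\ell\cdots x_m^\ell s_\lambda(x_1^{-1},\ldots,x_m^{-1})=s_\kappa(x_1,\ldots,x_m)$ and $s_\kappa$ is the Schubert polynomial of a Grassmannian permutation; (ii) Fulton's degeneracy-locus theorem producing an \emph{irreducible} subvariety $Y$ of $X=(\mathbb{P}^{n-1})^n$ with $[Y]=\mathfrak{S}_w(\mathrm{H}_1,\ldots,\mathrm{H}_n)\cap[X]$ for the nef classes $\mathrm{H}_i=c_1(\pi_i^*\mathscr{O}(1))$ (irreducibility is essential for applying the volume-polynomial criterion of \cite{Lorentzian} and is a nontrivial input); (iii) the computation, via the projection formula and the top intersection numbers $\int_X \mathrm{H}^\mu$, that $\mathrm{vol}_{Y,\mathrm{H}}$ equals $\mathrm{N}(x^\mu s_\lambda)$ for a suitable monomial $x^\mu$ --- this is precisely how the normalization operator $\mathrm{N}$ enters, and it resolves the ``$\mathrm{N}$ does not commute with multiplication'' worry you raise; and (iv) the lemma that $\mathrm{N}(x^\mu f)$ Lorentzian implies $\mathrm{N}(f)$ Lorentzian, which follows from the operator identity $\partial^\mu\circ\mathrm{N}\circ x^\mu=\mathrm{N}$ and closure of Lorentzian polynomials under partial derivatives. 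None of (i)--(iv) appears in your proposal beyond the level of a hope, so the argument as it stands is a plausible plan pointing toward the correct proof, not a proof.
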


Using general properties of Lorentzian polynomials \cite[Section 6]{Lorentzian},
Theorem \ref{MainTheorem} can be strengthened as follows.

\begin{corollary}\label{CorollaryProduct}
For any sequence of partitions  $\lambda^1,\ldots,\lambda^\ell$ and  any positive integers $m_1,\ldots,m_\ell$,
\begin{enumerate}[(1)]\itemsep 5pt
\item the normalized product of Schur polynomials $\mathrm{N}(\prod_{k=1}^\ell s_{\lambda^k}(x_1,\ldots,x_{m_k}))$ is Lorentzian, and
\item the product of normalized Schur polynomials $\prod_{k=1}^\ell \mathrm{N}(s_{\lambda^k}(x_1,\ldots,x_{m_k}))$ is Lorentzian.
\end{enumerate}
\end{corollary}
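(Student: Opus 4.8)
The plan is to derive Corollary~\ref{CorollaryProduct} formally from Theorem~\ref{MainTheorem} by invoking closure properties of the class of Lorentzian polynomials from \cite[Section~6]{Lorentzian}. The three properties I will use are: (a) the product of two Lorentzian polynomials is Lorentzian; (b) a Lorentzian polynomial remains Lorentzian after adjoining extra variables; and (c) the Lorentzian property is preserved under \emph{normalized pullback} along a linear map with nonnegative coefficients, i.e.\ under the operation $h \mapsto \mathrm{N}\bigl((\mathrm{N}^{-1}h)(A\mathbf{x})\bigr)$ for a matrix $A$ with nonnegative entries; in particular, if $\mathrm{N}(f)$ and $\mathrm{N}(g)$ are Lorentzian, then so is $\mathrm{N}(fg)$. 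Since $\mathrm{N}$ preserves homogeneity and total degree, every polynomial below is homogeneous, and I allow the zero polynomial as a degenerate Lorentzian polynomial — this is the only outcome when some $\lambda^k$ has more than $m_k$ parts.

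For part (2), put $n=\max_k m_k$. By Theorem~\ref{MainTheorem} each $\mathrm{N}\bigl(s_{\lambda^k}(x_1,\ldots,x_{m_k})\bigr)$ is Lorentzian in $x_1,\ldots,x_{m_k}$, hence in $x_1,\ldots,x_n$ by (b), and the product over $k$ is Lorentzian by (a). For part (1), I would argue by separating variables and then merging them. Introduce disjoint variable sets $x^{(k)}=(x^{(k)}_1,\ldots,x^{(k)}_{m_k})$; since the sets are disjoint, $\mathrm{N}$ factors across the product,
\[
\mathrm{N}\Bigl(\prod_{k=1}^\ell s_{\lambda^k}(x^{(k)})\Bigr)=\prod_{k=1}^\ell \mathrm{N}\bigl(s_{\lambda^k}(x^{(k)})\bigr),
\]
and the right-hand side is Lorentzian by Theorem~\ref{MainTheorem} and (a). The normalized pullback of this polynomial along the $0/1$ linear map identifying $x^{(k)}_i$ with $x_i$ for all $k,i$ is exactly $\mathrm{N}\bigl(\prod_k s_{\lambda^k}(x_1,\ldots,x_{m_k})\bigr)$, because denormalizing, substituting, and renormalizing in that order carries $\prod_k \mathrm{N}(s_{\lambda^k}(x^{(k)}))$ through $\prod_k s_{\lambda^k}(x^{(k)})$ and then $\prod_k s_{\lambda^k}(x_1,\ldots,x_{m_k})$ to the desired polynomial. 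By (c), it is therefore Lorentzian. (Alternatively, one can skip the disjoint variables and apply (c) directly, by induction on $\ell$, to $f=s_{\lambda^1}(x_1,\ldots,x_{m_1})$ and $g=\prod_{k\ge 2}s_{\lambda^k}(x_1,\ldots,x_{m_k})$.)

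The only step with real content is (c); the rest is bookkeeping. Normalization interacts with products in the shared variables in a genuinely nontrivial way — the coefficient of $x^\rho$ in $\mathrm{N}(fg)$ equals $\tfrac{1}{\rho!}\sum_{\mu+\nu=\rho} f_\mu g_\nu$, which is not any simple convolution of the coefficients of $\mathrm{N}(f)$ and $\mathrm{N}(g)$ — and one cannot instead expand $\prod_k s_{\lambda^k}$ into Schur polynomials and sum termwise, because a nonnegative combination of Lorentzian polynomials need not be Lorentzian. So the crux is to quote from \cite[Section~6]{Lorentzian} the statement that normalized pullbacks by nonnegative linear maps preserve the Lorentzian class (equivalently, that $\mathrm{N}(fg)$ is Lorentzian whenever $\mathrm{N}(f)$ and $\mathrm{N}(g)$ are), and to confirm that its hypotheses hold here — homogeneity, which is automatic, and the convention that the zero polynomial is permitted.
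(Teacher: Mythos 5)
Your proposal is correct and takes essentially the same route as the paper: part (2) is product-closure of Lorentzian polynomials (\cite[Corollary 5.5]{Lorentzian}) after harmlessly adjoining variables, and part (1) ultimately rests on the fact that $\mathrm{N}(fg)$ is Lorentzian whenever $\mathrm{N}(f)$ and $\mathrm{N}(g)$ are, which is exactly the result \cite[Corollary 6.8]{Lorentzian} that the paper quotes. The only adjustment I would make is to cite that product form directly (as in your alternative inductive argument) rather than the more general ``normalized pullback along a nonnegative matrix'' statement, which is stronger than what you need and not stated in that form in \cite{Lorentzian}.
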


We prove Theorem \ref{MainTheorem} in Section \ref{SectionMain} in a more general context of Schubert polynomials,
but the main idea
 is simple enough to be outlined here.
The \emph{volume polynomial} of   
an irreducible complex projective variety $Y$,
with respect to a sequence of nef divisor classes\footnote{A Cartier divisor on a complete variety $Y$ is \emph{nef} if it intersects every curve in $Y$ nonnegatively.
We refer to  \cite{Lazarsfeld} for a comprehensive introduction.}
$\mathrm{H}=(\mathrm{H}_1,\ldots,\mathrm{H}_m)$,
 is the homogeneous polynomial 
\[
\text{vol}_{Y,\mathrm{H}}(x_1,\ldots,x_m)=\frac{1}{\dim Y!} \int_Y (x_1\mathrm{H}_1+\cdots+x_m\mathrm{H}_m)^{\dim Y}, 
\]
where the intersection product of $Y$ is used to expand the integrand.
Volume polynomials are prototypical examples of Lorentzian polynomials \cite[Section 10]{Lorentzian}.
To show that the normalized Schur polynomial of  $\lambda$ is a volume polynomial, we suppose that the partition $\lambda$ has $m$ parts, and choose a large integer $\ell$ to get a complementary pair of partitions
\[
\lambda=(\lambda_1,\lambda_2, \ldots,\lambda_m) \ \ \text{and} \ \ \kappa=(\ell,\ell,\ldots,\ell)-(\lambda_m,\lambda_{m-1},\ldots, \lambda_1).
\]
The Schur polynomials of the partitions $\lambda$ and $\kappa$ are related by the identity\footnote{The dual of the Schur module $\mathrm{V}(\lambda)$ has highest weight $(-\lambda_m,\ldots,-\lambda_1)$, see \cite[Exercise 15.50]{FultonHarris}.}
\[
s_\kappa(x_1,\ldots,x_m)=x_1^\ell \cdots x_m^\ell s_\lambda(x_1^{-1},\ldots,x_m^{-1}).
\]
Let $X$ be the product of projective spaces $(\mathbb{P}^\ell)^m$, 
and let $Y$ be a subvariety of $X$ whose fundamental class satisfies
\[
[Y]=s_\kappa(\mathrm{H}_1,\ldots,\mathrm{H}_m) \cap [X], \quad \mathrm{H}_i=c_1(\pi_i^* \mathscr{O}(1)),
\]
where $\pi_i$ is the $i$-th projection. 
The volume polynomial of $Y$ with respect to $\mathrm{H}$ is
\begin{align*}
\text{vol}_{Y,\mathrm{H}}(x_1,\ldots,x_m)&=\frac{1}{\dim Y!} \int_Y (x_1\mathrm{H}_1+\cdots+x_n\mathrm{H}_m)^{\dim Y}\\
&=\frac{1}{\dim Y!}  \int_X s_\kappa(\mathrm{H}_1,\ldots,\mathrm{H}_m) (x_1\mathrm{H}_1+\cdots+x_m\mathrm{H}_m)^{\dim Y}=\mathrm{N} (s_\lambda(x_1,\ldots,x_m)).
\end{align*}
Such $Y$ can be constructed from a sequence of generic global sections $\bigoplus_{i=1}^m \pi_i^* \mathscr{O}(1)$ as a degeneracy locus \cite[Example 14.3.2]{FultonIntersection}, completing the argument.

In Section \ref{SectionMain}, we introduce Lorentzian polynomials and prove the main results. 
In Section \ref{SectionConjecture}, we present evidence for the ubiquity of Lorentzian polynomials through a series of results and conjectures.

\noindent
{\bf Acknowledgments.}
We are grateful to Dave Anderson, Alex Fink, Allen Knutson, Thomas Lam, Ricky  Liu,  Alex Postnikov, Pavlo Pylyavskyy, Vic Reiner, Mark Shimozono, and Alex Yong  for fruitful discussions. 
We thank the Institute for Advanced Study for providing an excellent environment for our collaboration. 

\section{Normalized Schur polynomials are Lorentzian}\label{SectionMain}

A subset $\mathrm{J} \subseteq \mathbb{Z}^n$ is \emph{$\mathrm{M}$-convex}\footnote{The letter $\mathrm{M}$ stands for \emph{matroids}. 
When $\mathrm{J} \subseteq \mathbb{N}^n$ consists of zero-one vectors, the $\mathrm{M}$-convexity of $\mathrm{J}$ is the symmetric basis exchange property of matroids \cite[Chapter 4]{White}.}  if, for any index $i \in [n]$ and any $\alpha \in \mathrm{J}$ and $\beta \in \mathrm{J}$ whose $i$-th coordinates satisfy $\alpha_i>\beta_i$, there is an index $j \in [n]$ satisfying
\[
\alpha_j < \beta_j \ \ \text{and} \ \ \alpha-e_i+e_j \in \mathrm{J} \ \ \text{and} \ \ \beta-e_j+e_i \in \mathrm{J}.
\]
The notion of $\mathrm{M}$-convexity forms the foundation of discrete convex analysis \cite{Murota}.
The convex hull of an $\mathrm{M}$-convex set  is a \emph{generalized permutohedron} in the sense of \cite{Postnikov},
and conversely, the set of integral points in an integral generalized permutohedron is an $\mathrm{M}$-convex set \cite[Theorem 1.9]{Murota}.

Lorentzian polynomials connect discrete convex analysis with  many log-concavity phenomena in combinatorics.
See \cite{AOGV18,ALOGV18a,ALOGV18b,BES,BH18,Lorentzian,EH19} for recent applications.
Here we briefly summarize the relevant results, and refer to \cite{Lorentzian} for details.
We fix integers $d$ and $e=d-2$.

\begin{definition}\label{LorentzianDefinition}
Let $h(x_1,\ldots,x_n)$ be a degree $d$ homogeneous polynomial.
We say that $h$ is \emph{strictly Lorentzian} if
all the coefficients of $h$ are positive and 
\[
\text{$\frac{\partial}{\partial x_{i_1}} \cdots \frac{\partial}{\partial x_{i_{e}}}h$ has the signature $(+,-,\ldots,-)$ for any  $i_1,\ldots,i_{e} \in [n]$.}
\]
We say that $h$ is \emph{Lorentzian} if it satisfies any one of the following equivalent conditions.
\begin{enumerate}[(1)]\itemsep 5pt
\item All the coefficients of $h$ are nonnegative, the support of $h$ is $\mathrm{M}$-convex,\footnote{The \emph{support} of a polynomial $h(x_1,\ldots,x_n)$ is the set of monomials appearing in $h$, viewed as a subset of $\mathbb{N}^n$.} and
\[
\text{$\frac{\partial}{\partial x_{i_1}} \cdots \frac{\partial}{\partial x_{i_{e}}}h$ has at most one positive eigenvalue  for any  $i_1,\ldots,i_{e} \in [n]$.}
\]
\item All the coefficients of $h$ are nonnegative and, for any $i_1,i_2,\ldots \in [n]$ and any positive  $k$,
\[
\text{the functions $h$ and $\frac{\partial}{\partial x_{i_1}} \cdots \frac{\partial}{\partial x_{i_{k}}}h$ are either identically zero or log-concave on $\mathbb{R}^n_{>0}$.}
\]
\item The polynomial $h$ is a limit of strictly Lorentzian polynomials.
\end{enumerate}
\end{definition}

For example, a bivariate polynomial $\sum_{k=0}^d a_k x_1^k x_2^{d-k}$ with nonnegative coefficients is Lorentzian
if and only if the sequence $a_0,\ldots,a_d$ has no internal zeros\footnote{The sequence $a_0,\ldots,a_d$ has \emph{no internal zeros} if $a_{k_1} a_{k_3} \neq 0 \Longrightarrow a_{k_2}\neq 0$ for all $0 \le k_1 <k_2<k_3\le d$.} 
and 
\[
\frac{a_k^2}{{d \choose k}^2} \ge \frac{a_{k-1}}{{d \choose k-1}} \frac{a_{k+1}}{{d \choose k+1}} \ \ \text{for all $0<k<d$.}
\]
Polynomials satisfying the second condition of Definition \ref{LorentzianDefinition}, introduced by Gurvits in \cite{Gurvits}, are called \emph{strongly log-concave}.
See \cite[Section 5]{Lorentzian} for a proof of the equivalence of the three conditions in Definition \ref{LorentzianDefinition}.

We write $\mathcal{S}_n$ for the group of permutations of $[n]$.
The \emph{Schubert polynomial} $\mathfrak{S}_w(x_1,\ldots,x_n)$ for $w \in \mathcal{S}_n$ can be defined recursively as follows.
\begin{enumerate}[(1)]\itemsep 5pt
\item If $w=w_\circ$ is the longest permutation $n\ n-1 \ \cdots  \ 2\ 1$, then
\[
\mathfrak{S}_{w}(x_1,\ldots,x_n)=x_1^{n-1}x_2^{n-2} \cdots  x_{n-1}^1.
\]
\item If $w(i)>w(i+1)$ for some $i$ and $s_i$ is the adjacent transposition $(i \ i+1)$, then
\[
\mathfrak{S}_{ws_i}(x_1,\ldots,x_n)=\partial_i \mathfrak{S}_{w}(x_1,\ldots,x_n).
\]
\end{enumerate}
The symbol $\partial_i$ stands for the \emph{$i$-th divided difference operator} defined by the formula
\[
\partial_i \mathfrak{S}_{w}=\frac{\mathfrak{S}_{w}- s_i\mathfrak{S}_{w} }{x_i-x_{i+1}},
\]
where $s_i\mathfrak{S}_w$ is the polynomial obtained from $\mathfrak{S}_w$ by interchanging $x_i$ and $x_{i+1}$.
The divided difference operators satisfy the \emph{braid relations},
and it follows that the Schubert polynomials are well-defined \cite[Exercise 15.3]{MillerSturmfels}.
For any $w \in \mathcal{S}_n$, we define
\[
\mathfrak{S}_w^\vee =\mathrm{N}(x_1^{n-1}\cdots x_n^{n-1}\mathfrak{S}_w(x_1^{-1},\ldots,x_n^{-1})).
\]

\begin{theorem}\label{SchubertComplement}
The polynomial $\mathfrak{S}_w^\vee(x_1,\ldots,x_n)$ is Lorentzian for any $w \in \mathcal{S}_n$.
\end{theorem}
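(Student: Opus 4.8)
The plan is to realize $\mathfrak{S}_w^\vee$ as the volume polynomial of an irreducible projective variety with respect to a sequence of nef divisor classes, and then apply the fact that volume polynomials are Lorentzian \cite[Section 10]{Lorentzian}; this is the Schubert-polynomial analogue of the argument sketched in the introduction. Fix $n$, let $X=(\mathbb{P}^{n-1})^n$ with $i$-th projection $\pi_i$, and set $\mathrm{H}_i=c_1(\pi_i^*\sO(1))$, which are nef classes on $X$. The whole argument rests on producing an irreducible subvariety $Y\subseteq X$ whose fundamental class satisfies
\[
[Y]=\mathfrak{S}_w(\mathrm{H}_1,\ldots,\mathrm{H}_n)\cap[X].
\]
I will first explain why such a $Y$ gives the theorem, and then how to build it.

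Assume $Y$ is given. Since $\mathfrak{S}_w$ has $x_i$-degree at most $n-i$, no monomial of $\mathfrak{S}_w(\mathrm{H}_1,\ldots,\mathrm{H}_n)$ dies in the cohomology ring $\mathbb{Z}[\mathrm{H}_1,\ldots,\mathrm{H}_n]/(\mathrm{H}_1^n,\ldots,\mathrm{H}_n^n)$ of $X$, so $\operatorname{codim}Y=\ell(w)$ and $\dim Y=n(n-1)-\ell(w)$. Expanding $(x_1\mathrm{H}_1+\cdots+x_n\mathrm{H}_n)^{\dim Y}$ multinomially and using that $\int_X\mathrm{H}_1^{b_1}\cdots\mathrm{H}_n^{b_n}$ equals $1$ when $b_1=\cdots=b_n=n-1$ and vanishes otherwise (for $0\le b_i\le n-1$), one obtains
\[
\vol_{Y,\mathrm{H}}(x_1,\ldots,x_n)=\frac{1}{\dim Y!}\int_X\mathfrak{S}_w(\mathrm{H}_1,\ldots,\mathrm{H}_n)\,(x_1\mathrm{H}_1+\cdots+x_n\mathrm{H}_n)^{\dim Y}=\mathfrak{S}_w^\vee(x_1,\ldots,x_n),
\]
just as $\vol_{Y,\mathrm{H}}=\mathrm{N}(s_\lambda)$ in the introduction: pairing $\mathfrak{S}_w(\mathrm{H})$ against the monomials $\mathrm{H}^\alpha$ on the product of projective spaces reads off the coefficients of $\mathfrak{S}_w$ at the complementary degrees, which is exactly the data recorded by $\mathfrak{S}_w^\vee$. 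Since $(x_1\mathrm{H}_1+\cdots+x_n\mathrm{H}_n)|_Y$ is ample, this volume polynomial is not identically zero, so $\mathfrak{S}_w^\vee$ is Lorentzian by \cite[Section 10]{Lorentzian}.

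To construct $Y$, I would invoke Fulton's description of Schubert polynomials as classes of degeneracy loci \cite[Example 14.3.2]{FultonIntersection}. Let $E=\bigoplus_{i=1}^n\pi_i^*\sO(1)$, a globally generated vector bundle of rank $n$ on $X$ whose Chern roots are $\mathrm{H}_1,\ldots,\mathrm{H}_n$, equipped with the evident complete flag of subbundles $E_k=\bigoplus_{i\le k}\pi_i^*\sO(1)$. For a generic bundle map $\varphi\colon\sO_X^{\oplus n}\to E$ — equivalently, $n$ generic global sections of $E$ — let $Y=\Omega_w(\varphi)$ be the locus on which $\varphi$ satisfies the rank conditions indexing $w$, i.e.\ the preimage under $\varphi$ of the matrix Schubert variety $\overline{X_w}\subseteq\mathrm{Mat}_n(\mathbb{C})$. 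Because the reference flag sits inside the trivial bundle $\sO_X^{\oplus n}$, the corresponding family of variables in the double Schubert polynomial is specialized to zero, and Fulton's formula yields $[Y]=\mathfrak{S}_w(\mathrm{H}_1,\ldots,\mathrm{H}_n)\cap[X]$. (Depending on conventions one might instead obtain $\mathfrak{S}_{w^{-1}}$ or $\mathfrak{S}_{w_0ww_0}$; this is immaterial, as these are bijections of $\mathcal{S}_n$ and the theorem asserts the conclusion for every element of $\mathcal{S}_n$.)

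The main obstacle is to ensure that, for generic $\varphi$, the locus $Y$ is genuinely irreducible of the expected dimension $n(n-1)-\ell(w)$, rather than merely a cycle of the right class. For this I would appeal to a Bertini-type irreducibility statement: the matrix Schubert variety $\overline{X_w}$ is irreducible of codimension $\ell(w)$, so the corresponding subvariety $\mathcal{X}_w$ of the total space of the bundle $E^{\oplus n}=\mathcal{H}om(\sO_X^{\oplus n},E)$ dominates $X$ with irreducible fibers; since $E^{\oplus n}$ is globally generated and $\ell(w)\le\binom{n}{2}\le\dim X$, the preimage of $\mathcal{X}_w$ under a generic global section is irreducible of the expected codimension. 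The only delicate point is that $E$ is globally generated but not ample, so one must verify that global generation of $E^{\oplus n}$ already supplies the required transversality, rather than invoking ampleness; alternatively one may run the whole argument with $\bigoplus_i\pi_i^*\sO(a_i)$ for $a_1,\ldots,a_n\gg0$ and afterward undo the substitution $x_i\mapsto a_ix_i$, which preserves the Lorentzian property. (Theorem \ref{MainTheorem} then follows by the same method applied to the Schur polynomial of a globally generated bundle, exactly as sketched in the introduction.)
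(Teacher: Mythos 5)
Your strategy is the paper's: realize $\mathfrak{S}_w^\vee$ as the volume polynomial, with respect to the nef classes $\mathrm{H}_i$, of a subvariety $Y\subseteq(\mathbb{P}^{n-1})^n$ with $[Y]=\mathfrak{S}_w(\mathrm{H}_1,\ldots,\mathrm{H}_n)\cap[X]$, and the formal part of your argument (the degree count and the evaluation of $\vol_{Y,\mathrm{H}}$ via $\int_X\mathrm{H}^\mu$) matches the paper's computation. The genuine gap is precisely the point you flag and defer: irreducibility of $Y$, which is a hypothesis of \cite[Theorem 10.1]{Lorentzian}, is not established by the Bertini-type argument you sketch. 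Irreducibility of the incidence variety $\{(x,\varphi):\varphi(x)\ \text{satisfies the rank conditions}\}$ does follow from global generation together with irreducibility of the matrix Schubert variety, but an irreducible total space dominating the space of maps does not force the \emph{generic fiber} to be irreducible; and the standard irreducibility/connectedness theorems for degeneracy loci of general morphisms (Fulton--Lazarsfeld type) require ampleness of the relevant Hom-bundle, which fails here. Your proposed fallback of replacing $\pi_i^*\mathscr{O}(1)$ by $\pi_i^*\mathscr{O}(a_i)$ with $a_i\gg 0$ does not repair this: a pullback along a projection is trivial on the fibers of that projection, so $\bigoplus_i\pi_i^*\mathscr{O}(a_i)$ is never ample on the product, no matter how large the $a_i$ are.

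The paper avoids Bertini altogether by taking one explicit map rather than a generic one: the tautological $\Psi:\bigoplus_i\mathscr{O}_X\to\bigoplus_j\pi_j^*\mathscr{O}(1)$ whose matrix consists of the homogeneous coordinates $(x_{ij})$, so that $Y_w$ is cut out by the Schubert determinantal conditions in these coordinates. Irreducibility of the expected codimension $\ell(w)$ is then a known theorem, not a transversality statement: the class formula $[Y_w]=\mathfrak{S}_w(\mathrm{H})\cap[X]$ is \cite[Theorem 8.2]{FultonFlags} (see also \cite{KnutsonMiller}), and an elementary proof that the multi-homogeneous Schubert determinantal ideal is prime is in \cite[Section 16.4]{MillerSturmfels}. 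Your construction can be closed the same way with one observation: for a general $\varphi$, the $n$ linear forms appearing in the column mapping to $\pi_j^*\mathscr{O}(1)$ are linearly independent, hence form a new coordinate system on the $j$-th factor, so $\varphi$ is obtained from $\Psi$ by independent linear coordinate changes on the factors of $X$. Thus the generic degeneracy locus is the image of $Y_w$ under an automorphism of $X$, and its irreducibility follows from the primality result above rather than from any Bertini or ampleness argument.
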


We conjecture that $\mathrm{N}(\mathfrak{S}_w(x_1,\ldots,x_n))$ is Lorentzian for any $w \in \mathcal{S}_n$, see Section \ref{SectionSchubert}.

\begin{proof}
Recall that the volume polynomial of   
a projective variety $Y$,
with respect to a sequence of Cartier divisor classes $\mathrm{H}=(\mathrm{H}_1,\ldots,\mathrm{H}_n)$, is the homogeneous polynomial 
\[
\text{vol}_{Y,\mathrm{H}}(x_1,\ldots,x_n)=\frac{1}{\dim Y!} \int_Y (x_1\mathrm{H}_1+\cdots+x_n\mathrm{H}_n)^{\dim Y}.
\]
By  \cite[Theorem 10.1]{Lorentzian}, the volume polynomial is Lorentzian whenever $Y$ is irreducible and $\mathrm{H}_1,\ldots,\mathrm{H}_n$ are nef.
We show that $\mathfrak{S}_w^\vee$ is a volume polynomial for suitable $Y=Y_w$ and $\mathrm{H}$.

Let $X$ be the product of projective spaces $(\mathbb{P}^{n-1})^n$.
We write $x_{i1},x_{i2},\ldots,x_{in}$ for the homogeneous coordinates of the $i$-th projective space,
and write $\pi_i$ for the $i$-th projection. 
We consider the map between the rank $n$ vector bundles
\[
\Psi:\bigoplus_{i=1}^n \mathscr{O}_X \longrightarrow \bigoplus_{j=1}^n \pi_j^*\mathscr{O}(1), \quad \Psi(x)=(x_{ij})_{1 \le i \le n, 1 \le j \le n}.
\]
For $p,q \in [n]$, the induced map
$\bigoplus_{i=1}^p \mathscr{O}_X \rightarrow
 \bigoplus_{j=1}^q \pi_j^*\mathscr{O}(1)
$
will be denoted $\Psi_{p \times q}$.
We set
\[
Y=Y_w\vcentcolon=\Big\{x \in X \mid \text{$\text{rank}\ \Psi_{p\times q}(x) \le \text{rank}\ w_{p\times q}$ for all $p$ and $q$}\Big\},
\]
where $w_{p\times q}$ is the $p \times q$  partial permutation matrix with $ij$-entry $1$ for $w(i)=j$.
The locus $Y$ is defined by all minors of $(x_{ij})_{1 \le i \le p, 1 \le j \le q}$ of size one more than  the rank of $w_{p \times q}$ for all $p$ and $q$.

By \cite[Theorem 8.2]{FultonFlags}, the fundamental class of $Y$ in the Chow group of $X$ is given by
\[
[Y]=\mathfrak{S}_w(\mathrm{H}_1,\ldots,\mathrm{H}_n) \cap [X], \quad \mathrm{H}_i=c_1(\pi_i^* \mathscr{O}(1)).
\]
An alternative proof of the displayed formula, in a more refined setting, was obtained in \cite{KnutsonMiller} through an explicit degeneration of $Y$.
An important point for us is that $Y$ is  irreducible of expected codimension $\text{deg}\ \mathfrak{S}_w$ \cite{FultonFlags}. 
For an elementary proof that the multi-homogeneous ideal defining $Y$ is prime, see  \cite[Section 16.4]{MillerSturmfels}.
The volume polynomial of $Y$ with respect to $\mathrm{H}=(\mathrm{H}_1,\ldots,\mathrm{H}_n)$ is
\begin{align*}
\text{vol}_{Y,\mathrm{H}}(x_1,\ldots,x_n) &=\frac{1}{\dim Y!}\int_{Y} (x_1\mathrm{H}_1 +\cdots+x_n\mathrm{H}_n)^{\dim Y}\\
&=\frac{1}{\dim Y!}\int_X \mathfrak{S}_w(\mathrm{H}_1,\ldots,\mathrm{H}_n) (x_1\mathrm{H}_1 +\cdots+x_n\mathrm{H}_n)^{\dim Y}= \mathfrak{S}_w^\vee(x_1,\ldots,x_n).
\end{align*}
The second equality is the projection formula, and the third equality follows from
\[
\int_X \mathrm{H}^\mu =\left\{\begin{array}{cc} 1 & \text{if $\mu=(n-1,\ldots,n-1)$,} \\ 0& \text{if $\mu\neq (n-1,\ldots,n-1)$.} \end{array}\right.
\]
Now the Lorentzian property of $\mathfrak{S}_w^\vee$ can be deduced from \cite[Theorem 10.1]{Lorentzian}.
\end{proof}

\begin{lemma}\label{Translation}
For any $\mu \in \mathbb{N}^n$ and any polynomial $f=f(x_1,\ldots,x_n)$, 
\[
\text{$\mathrm{N}(f)$ is Lorentzian if and only if $\mathrm{N}(x^\mu f)$ is Lorentzian}. 
\]
\end{lemma}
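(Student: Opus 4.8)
The plan is to reduce everything to the known Lorentzian machinery of \cite{Lorentzian}, treating the statement as an instance of how the operator $\mathrm{N}$ interacts with multiplication by monomials. First I would recall the combinatorial characterization: $\mathrm{N}(g)$ is Lorentzian if and only if the homogenized polynomial obtained from $\mathrm{N}(g)$ is Lorentzian in the sense of Definition \ref{LorentzianDefinition}, and this in turn is governed by the support being $\mathrm{M}$-convex together with the eigenvalue condition on iterated partials. The key observation is that $\mathrm{N}(x^\mu f)$ and $\mathrm{N}(f)$ are related by a very simple transformation on the level of normalized monomials: if $f=\sum_\alpha c_\alpha x^\alpha$, then
\[
\mathrm{N}(x^\mu f)=\sum_\alpha c_\alpha \frac{x^{\alpha+\mu}}{(\alpha+\mu)!}=\sum_\alpha c_\alpha \frac{\alpha!}{(\alpha+\mu)!}\,\mathrm{N}(x^\alpha)\cdot x^\mu,
\]
so passing from $\mathrm{N}(f)$ to $\mathrm{N}(x^\mu f)$ is exactly the effect of a differential operator built from the $x_i\partial_i$. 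Concretely, I expect that $\mathrm{N}(x^\mu f)$ is obtained from $\mathrm{N}(f)$ by first multiplying by $x^\mu$ and then applying the lowering operator $\prod_i \partial_i^{\mu_i}$, or symmetrically, that $\prod_i \partial_i^{\mu_i}\bigl(\mathrm{N}(x^\mu f)\bigr)=\mathrm{N}(f)$ up to reindexing; this is the identity to nail down carefully.

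The second step is to invoke the closure properties of Lorentzian polynomials proved in \cite[Section 6]{Lorentzian}. The relevant facts are: (i) if $h$ is Lorentzian then so is $\partial_i h$ (partial derivatives preserve the Lorentzian class), and (ii) a product $x^\mu h$ of a monomial with a Lorentzian polynomial is again Lorentzian, since monomials are Lorentzian and the class is closed under products. Granting the identity from the first step, one direction is immediate: if $\mathrm{N}(f)$ is Lorentzian, then $x^\mu\,\mathrm{N}(f)$ is Lorentzian by (ii), and since $\mathrm{N}(x^\mu f)$ is obtained from $x^\mu\,\mathrm{N}(f)$ by applying the degree-preserving-up-to-normalization operator that accounts for the factorials — which I claim is precisely a composition of the operations in (i) and (ii), or a direct limit of strictly Lorentzian polynomials — it too is Lorentzian. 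For the converse, if $\mathrm{N}(x^\mu f)$ is Lorentzian, I would apply $\prod_i \partial_i^{\mu_i}$ and use (i) repeatedly to conclude $\mathrm{N}(f)$ (which equals $\prod_i\partial_i^{\mu_i}\mathrm{N}(x^\mu f)$ after matching constants) is Lorentzian. One has to check that the constant relating $\prod_i\partial_i^{\mu_i}\mathrm{N}(x^\mu f)$ to $\mathrm{N}(f)$ is a single positive scalar — it is, namely the monomials match up because $\partial_i^{\mu_i}\bigl(x_i^{\alpha_i+\mu_i}/(\alpha_i+\mu_i)!\bigr)=x_i^{\alpha_i}/\alpha_i!$ — and scaling by a positive constant preserves Lorentzianity.

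The main obstacle, and the only place requiring genuine care, is the bookkeeping in the first step: verifying the exact operator identity $\mathrm{N}(f)=\prod_{i=1}^n \partial_i^{\mu_i}\,\mathrm{N}(x^\mu f)$ monomial-by-monomial, and confirming that both sides are genuinely polynomials (no negative exponents appear) because $f$ has nonnegative-exponent support and $\alpha+\mu$ dominates $\mu$ coordinatewise. Once that identity is in hand, the symmetric statement ``$\mathrm{N}(f)$ Lorentzian $\iff \mathrm{N}(x^\mu f)$ Lorentzian'' follows by combining the derivative-closure and monomial-product-closure results from \cite[Section 6]{Lorentzian} in the two directions as above. A secondary subtlety is the degenerate case where $\mathrm{N}(f)$ is identically zero or has non-$\mathrm{M}$-convex support; but the operator identity shows the supports of $\mathrm{N}(f)$ and $\mathrm{N}(x^\mu f)$ differ by a translation by $\mu$, and translation by a lattice vector preserves $\mathrm{M}$-convexity, so all the defining conditions transport cleanly, and the identically-zero case is trivial in both directions.
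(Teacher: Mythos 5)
Your ``if'' direction is exactly the paper's argument: the operator identity $\partial^\mu \circ \mathrm{N} \circ x^\mu = \mathrm{N}$ (the second of the two identities you wrote down -- note your first guess, $\partial^\mu\bigl(x^\mu\,\mathrm{N}(f)\bigr)=\mathrm{N}(x^\mu f)$, is false, since $\partial^\mu x^{\alpha+\mu}=\tfrac{(\alpha+\mu)!}{\alpha!}x^\alpha$ introduces the wrong constants), combined with the fact that partial derivatives preserve the Lorentzian class, gives $\mathrm{N}(x^\mu f)$ Lorentzian $\Rightarrow$ $\mathrm{N}(f)$ Lorentzian.

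The gap is in the converse. You pass from $\mathrm{N}(f)$ to $x^\mu\,\mathrm{N}(f)$ (fine: products of Lorentzian polynomials are Lorentzian), but $x^\mu\,\mathrm{N}(f)\neq \mathrm{N}(x^\mu f)$: the coefficient of $x^{\alpha+\mu}$ must be rescaled by $\alpha!/(\alpha+\mu)!$, a factor depending on $\alpha$. Your claim that this rescaling is ``precisely a composition of the operations (i) and (ii), or a direct limit of strictly Lorentzian polynomials'' is not an argument -- derivatives strictly lower degree and monomial multiplication rescales nothing, so no composition of them produces this coefficientwise dilation, and ``limit of strictly Lorentzian polynomials'' merely restates what is to be proved. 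This direction is genuinely nontrivial: already for $\mu=e_i$ it is the statement that the partial antiderivative $g\mapsto\int_0^{x_i}g$ preserves Lorentzian polynomials, since $\mathrm{N}(x_i f)=\int_0^{x_i}\mathrm{N}(f)$. The paper disposes of it by citing \cite[Corollary 6.8]{Lorentzian}: because $\mathrm{N}(x^\mu)=x^\mu/\mu!$ is Lorentzian and $\mathrm{N}(f)$ is Lorentzian, the normalized product $\mathrm{N}(x^\mu f)$ is Lorentzian. You should replace the hand-waved step by this citation (or by a proof of the equivalent antiderivative-closure statement, which requires the linear-operator machinery of \cite[Section 6]{Lorentzian}); your remark about supports translating by $\mu$ handles only the $\mathrm{M}$-convexity condition, not the eigenvalue condition, so it cannot substitute for this.
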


\begin{proof} 
If a polynomial $g(x_1,\ldots,x_n)$ is Lorentzian, then so is its partial derivative 
\[
\partial^\mu g=\Big(\frac{\partial}{\partial x_1}\Big)^{\mu_1} \cdots \Big(\frac{\partial}{\partial x_n}\Big)^{\mu_n} g(x_1,\ldots,x_n).
\]
Therefore, the ``if'' direction follows from the equality of linear operators
\[
\partial^\mu \circ  \mathrm{N} \circ x^\mu=\mathrm{N}.
\]
The ``only if'' direction is a special case of \cite[Corollary 6.8]{Lorentzian}.
\end{proof}

\begin{proof}[Proof of Theorem \ref{MainTheorem}]
As in the introduction, given a partition $\lambda$ with $m$ parts, 
we choose a large integer $\ell$ and write $\kappa$ for the partition complementary to $\lambda$ in  the 
$m \times \ell$ rectangle. 
Choose another  large integer $n$, 
and let $w$  be the unique element of $\mathcal{S}_n$ satisfying 
\[
\kappa=\big(w(m)-m, \ldots, w(1)-1\big) \ \ \text{and} \ \ w(m)>w(m+1)<w(m+2)<\cdots<w(n).
\]
The element $w$ is the \emph{Grassmannian permutation} in $\mathcal{S}_n$ with the \emph{Lehmer code} 
\[
L(w)=(w(1)-1,\ldots,w(m)-m,0,\ldots,0)=(\kappa_m,\ldots,\kappa_1,0,\ldots,0).
\]
The Schubert polynomial of $w$ satisfies
\[
\mathfrak{S}_w(x_1,\ldots,x_n)=s_\kappa(x_1,\ldots,x_m)=x_1^\ell \cdots x_m^\ell s_\lambda(x_1^{-1},\ldots,x_m^{-1}),
\]
where the first equality is \cite[Proposition 2.6.8]{Manivel} and the second equality is \cite[Exercise 15.50]{FultonHarris}.
By Theorem \ref{SchubertComplement}, we know that the polynomial $\mathfrak{S}^\vee_w$ is Lorentzian, which is equal to
\[
\mathrm{N}(x_1^{n-1}\cdots x_n^{n-1} s_\kappa(x_1^{-1},\ldots,x_m^{-1}))=
\mathrm{N}(x^{\mu} s_\lambda(x_1,\ldots,x_m)) \ \text{for some $\mu \in \mathbb{N}^n$.}
\]
Therefore, by Lemma \ref{Translation}, the Lorentzian property of $\mathfrak{S}_w^\vee$ implies that of $\mathrm{N}(s_\lambda(x_1,\ldots,x_m))$.
\end{proof}

\begin{proof}[Proofs of Theorems \ref{Continuous} and \ref{Discrete}]
Since any nonzero Lorentzian polynomial  is log-concave on the positive orthant,
Theorem \ref{Continuous} follows from Theorem \ref{MainTheorem}.
For Theorem \ref{Discrete}, we may suppose that 
\[
\mu_1+\cdots+\mu_m=\lambda_1+\cdots+\lambda_m \ge 2 \ \ \text{and} \ \  \kappa\vcentcolon=\mu-e_i-e_j \in \mathbb{N}^m.
\]
We consider the quadratic form with at most one positive eigenvalue
\[
\frac{\partial^{\kappa_1}}{\partial x_1^{\kappa_1}} \cdots \frac{\partial^{\kappa_m}}{\partial x_m^{\kappa_m}}\mathrm{N}(s_\lambda(x_1,\ldots,x_m)),
\]
viewed  as an $m \times m$ symmetric matrix.
Its $2 \times 2$ principal submatrix corresponding to $i$ and $j$ is either identically zero or has exactly one positive eigenvalue,
by Cauchy's interlacing theorem.
The nonpositivity of the $2 \times 2$ principal minor gives the conclusion
\[
K_{\lambda \mu}^2 \ge K_{\lambda \mu(i,j)}K_{\lambda \mu(j,i)}. \qedhere
\]
\end{proof}

\begin{proof}[Proof of Corollary \ref{CorollaryProduct}]
The first part follows from Theorem \ref{MainTheorem} and \cite[Corollary 6.8]{Lorentzian}.
The second part follows from Theorem \ref{MainTheorem} and  \cite[Corollary 5.5]{Lorentzian}.
\end{proof}

In general, if $h$ is a Lorentzian polynomial, then its normalization $\mathrm{N}(h)$ is a Lorentzian polynomial \cite[Corollary 6.7]{Lorentzian}.
We record here that Schur polynomials, before the normalization, need not be Lorentzian.

\begin{example}
The Schur polynomial of the partition $\lambda=(2,0)$ in two variables is 
\[
s_\lambda(x_1,x_2)=x_1^2+x_1x_2+x_2^2.
\]
The quadratic form has eigenvalues $\frac{3}{2}$ and $\frac{1}{2}$, and hence $s_\lambda$ is not Lorentzian.
\end{example}

A polynomial $f(x_1,\ldots,x_m)$ is \emph{stable} if 
$f$ has no zeros in the product of $m$ open upper half planes \cite{Wagner}.
Homogeneous stable polynomials with nonnegative coefficients are motivating examples of Lorentzian polynomials \cite[Proposition 2.2]{Lorentzian}.
We record here that normalized Schur polynomials, although Lorentzian, need not be stable.

\begin{example}
The normalized Schur polynomial of  $\lambda=(3,1,1,1,1)$ in five variables is
\[
\mathrm{N}(s_{\lambda}(x_1,\ldots,x_5))=\frac{1}{12}x_1x_2x_3x_4x_5\Big( \sum_{1\leq i< j\leq 5} 3x_ix_j +\sum_{1 \le i\le 5} 2x_i^2\Big).
\]
By \cite[Lemma 2.4]{Wagner}, if $\mathrm{N}(s_{\lambda})$ is stable, then so is its univariate specialization
\[
\mathrm{N}(s_{\lambda})|_{x_2=x_3=x_4=x_5=1}=\frac{1}{6} x_1 \Big(x_1^2+6x_1+13\Big).
\]
However, the displayed cubic has a pair of nonreal zeros, and hence  $\mathrm{N}(s_{\lambda})$ is not stable. 
\end{example}

\section{Ubiquity of Lorentzian polynomials}\label{SectionConjecture}

\subsection{Multiplicities of highest weight modules}\label{SectionMultiplicity}

We point to \cite{Humphreys} for background on representation theory of semisimple Lie algebras.
Let $\Lambda$ be the integral weight lattice of the Lie algebra $\mathfrak{sl}_m(\C)$,
 let $\varpi_1,\ldots,\varpi_{m-1}$ be the fundamental weights, and let $\rho$ be the sum of the fundamental weights.
For $\lambda \in \Lambda$, we write $\mathrm{V}(\lambda)$ for the irreducible  $\mathfrak{sl}_m(\C)$-module with highest weight $\lambda$,
and consider its decomposition  into finite-dimensional weight spaces
\[
\mathrm{V}(\lambda)=\bigoplus_\mu \mathrm{V}(\lambda)_\mu.
\]
For $\mu \in \Lambda$ and distinct $i,j \in [m]$, we write
$
\mu(i,j)$ for the element $\mu+e_i-e_j \in \Lambda$.

\begin{conjecture}\label{MultiplicityConjecture}
	For any $\lambda \in \Lambda$  and any $\mu \in \Lambda$, we have
\[
(\dim \mathrm{V}(\lambda)_\mu)^2 \ge \dim \mathrm{V}(\lambda)_{\mu(i,j)} \dim \mathrm{V}(\lambda)_{\mu(j,i)} \ \ \text{for any $i,j \in [m]$.}
\]
\end{conjecture}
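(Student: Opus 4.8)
The plan is to split Conjecture \ref{MultiplicityConjecture} according to whether $\lambda$ is dominant. The dominant case reduces to Theorem \ref{Discrete}, while the non-dominant case is where I expect the real difficulty to lie.

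Suppose first that $\lambda$ is dominant, so $\mathrm{V}(\lambda)$ is finite-dimensional. Identify $\Lambda$ with $\Z^m/\Z(1,\ldots,1)$ and lift $\lambda$ to the partition $(\lambda_1 \ge \cdots \ge \lambda_m = 0)$. If $\mu \in \Lambda$ does not lie in the coset of $\lambda$ modulo the root lattice, then $\dim\mathrm{V}(\lambda)_\mu$, $\dim\mathrm{V}(\lambda)_{\mu(i,j)}$, and $\dim\mathrm{V}(\lambda)_{\mu(j,i)}$ all vanish and there is nothing to prove; otherwise $\mu$ has a unique representative $\widetilde\mu \in \Z^m$ with $\widetilde\mu_1+\cdots+\widetilde\mu_m = \lambda_1+\cdots+\lambda_m$, and $\dim\mathrm{V}(\lambda)_\mu = K_{\lambda\widetilde\mu}$, where we set $K_{\lambda\nu} = 0$ for $\nu \notin \mathbb{N}^m$ (consistent, since every weight of $\mathrm{V}(\lambda)$ lies in $\conv(W\lambda)\subseteq\R^m_{\ge 0}$ under this normalization). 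Because $\widetilde{\mu(i,j)} = \widetilde\mu + e_i - e_j$ has the same coordinate sum, the desired inequality becomes $K_{\lambda\widetilde\mu}^2 \ge K_{\lambda(\widetilde\mu+e_i-e_j)}K_{\lambda(\widetilde\mu-e_i+e_j)}$ for $\widetilde\mu \in \Z^m$: this is Theorem \ref{Discrete} when $\widetilde\mu \in \mathbb{N}^m$, and when $\widetilde\mu$ has a negative coordinate a one-line case analysis forces one of the two Kostka numbers on the right to vanish.

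For $\lambda$ not dominant, $\mathrm{V}(\lambda) = L(\lambda)$ is infinite-dimensional: its weights fill the intersection of $\lambda$ minus the monoid generated by the positive roots with a coset of the root lattice, and its character is an alternating sum of Verma characters weighted by Kazhdan--Lusztig polynomials. My proposal is to try to mimic the proof of Theorem \ref{Discrete}: produce, for each $\mu$, a genuine Lorentzian polynomial whose coefficients record $\dim L(\lambda)_\nu$ for $\nu$ in a bounded region containing $\mu$, $\mu(i,j)$, $\mu(j,i)$ --- for instance by truncating a normalized character $\mathrm{N}(\mathrm{ch}\, L(\lambda))$ to a polytope, or by realizing the relevant multiplicities as intersection numbers on a Schubert variety (or an irreducible twisted $\mathcal{D}$-module on the flag variety), in the spirit of the volume-polynomial construction behind Theorem \ref{MainTheorem} --- and then differentiate iteratively to reach a quadratic form with at most one positive eigenvalue and apply Cauchy interlacing to its $2\times 2$ principal submatrix indexed by $i$ and $j$.

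The main obstacle is the construction of such a positive model. Weight multiplicities of a non-dominant $L(\lambda)$ have no known manifestly nonnegative combinatorial or geometric description; Kazhdan--Lusztig positivity gives the nonnegativity of the relevant integers but is ill-suited to log-concavity, and the unboundedness of the support means the homogeneous formalism of Definition \ref{LorentzianDefinition} does not literally apply and would need a local or power-series analogue. A natural first target is the parabolic case, with $\lambda$ dominant for a Levi subalgebra: there $L(\lambda)$ has a BGG-type resolution by parabolic Verma modules whose characters are a Schur polynomial for the Levi times an explicit infinite geometric factor, so the problem would reduce to controlling the signs in that resolution while retaining the Lorentzian property of the Schur factor already supplied by Theorem \ref{MainTheorem}.
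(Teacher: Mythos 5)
This statement is a \emph{conjecture} in the paper (Conjecture \ref{MultiplicityConjecture}); the paper does not prove it in general, so there is no complete argument to measure yours against, only two special cases that the paper settles. Your dominant-case reduction is correct and is exactly the paper's: lifting $\mu$ to the unique representative in $\mathbb{Z}^m$ with the same coordinate sum as the partition lift of $\lambda$, the inequality becomes Theorem \ref{Discrete}, with the off-support and negative-coordinate cases disposed of by vanishing Kostka numbers as you indicate. Your proposed strategy for general $\lambda$ --- truncate the (shifted) character to a genuine polynomial and ask that its normalization be Lorentzian --- is precisely the paper's Conjecture \ref{ConjectureIrrep}: one multiplies $\mathrm{ch}_\lambda$ by $x^\delta$ and applies $\mathrm{N}$, which discards every Laurent monomial with a negative exponent, and Conjecture \ref{MultiplicityConjecture} for a given $(\lambda,\mu)$ follows from the Lorentzian property of $\mathrm{N}(x^\delta\,\mathrm{ch}_\lambda)$ for sufficiently large $\delta$. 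So the ``unbounded support'' obstacle you flag has a standard resolution, and the dominant case of that stronger statement is Theorem \ref{MainTheorem} together with Lemma \ref{Translation}.

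The concrete point you miss is that the antidominant case, where $\mathrm{V}(\lambda)=\mathrm{M}(\lambda)$ is a Verma module, is not part of the open difficulty: the paper proves it (Propositions \ref{VermaLogConcave} and \ref{VermaLorentzian}). There the weight multiplicities are Kostant partition function values, which by the Lidskii--Baldoni--Vergne volume formula are mixed volumes, so Alexandrov--Fenchel gives the log-concavity directly; alternatively, $\underline{\mathrm{ch}}_\lambda$ is a product of geometric series in the variables $x_ix_j^{-1}$, and after the $x^\delta$-truncation one reduces, via partial derivatives and the closure of Lorentzian polynomials under products, to the manifestly Lorentzian binomials $(x_i+x_j)^{\alpha_j}/\alpha_j!$. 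Note that this Verma case is exactly the trivial-Levi instance of the parabolic ``first target'' you propose (no BGG resolution, hence no signs to control), so it is the natural entry point rather than an open subcase; the genuinely parabolic case, and the conjecture itself, remain open, and the paper's $\mathfrak{sp}_4$ example shows the analogous statement already fails outside type $A$, so any argument must use the type-$A$ root directions in an essential way.
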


When $\lambda$ is dominant, the dimension of the weight space $\mathrm{V}(\lambda)_\mu$ is the Kostka number $K_{\lambda\mu}$,
and Theorem \ref{Discrete} shows that Conjecture \ref{MultiplicityConjecture} holds in this case.
When $\lambda$ is antidominant \cite[Section 4.4]{Humphreys}, $\mathrm{V}(\lambda)$ is the \emph{Verma module} $\mathrm{M}(\lambda)$, the universal highest weight module of  highest weight $\lambda$.
We note that Conjecture \ref{MultiplicityConjecture} holds in this case as well.

\begin{proposition}\label{VermaLogConcave}
For any $\lambda \in \Lambda$  and any $\mu \in \Lambda$, we have
\[
(\dim \mathrm{M}(\lambda)_\mu)^2 \ge \dim \mathrm{M}(\lambda)_{\mu(i,j)} \dim \mathrm{M}(\lambda)_{\mu(j,i)} \ \ \text{for any $i,j \in [m]$.}
\]
\end{proposition}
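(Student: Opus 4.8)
The plan is to compute the weight multiplicities of the Verma module $\mathrm{M}(\lambda)$ explicitly and recognize the resulting generating function as (a monomial times) the reciprocal of a product of linear forms, which is a prototypical Lorentzian-adjacent object. Concretely, by the PBW theorem the character of $\mathrm{M}(\lambda)$ is
\[
\operatorname{ch}\mathrm{M}(\lambda) = e^\lambda \prod_{\alpha>0}\frac{1}{1-e^{-\alpha}},
\]
so $\dim \mathrm{M}(\lambda)_\mu$ equals the \emph{Kostant partition function} $\mathcal{K}(\lambda-\mu)$, the number of ways to write $\lambda-\mu$ as a nonnegative integer combination of the positive roots $e_a-e_b$ ($1\le a<b\le m$). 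Fixing $\lambda$ and translating, the function $\mu\mapsto \dim\mathrm{M}(\lambda)_\mu$ is, up to reindexing, the coefficient extraction from $\prod_{a<b}(1-y_a/y_b)^{-1}$. I would first reduce the $i,j$ statement to a two-variable assertion by noting that the inequality only involves three weights lying on a line in direction $e_i-e_j$, and that moving in this direction corresponds to moving along a single "diagonal" in the generating function.

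The key step is then to observe that $\prod_{a<b}(1-x_a/x_b)^{-1}$, suitably homogenized, is a limit of Lorentzian polynomials, or more directly that the relevant univariate restriction is a ratio whose coefficient sequence is log-concave. The cleanest route: the Kostant partition function for type $A$ counts lattice points in a flow polytope (the number of nonnegative integer flows on the complete graph with prescribed netflow $\lambda-\mu$), and Ehrhart-type or volume-polynomial arguments give log-concavity along root directions. Alternatively — and I expect this to be the shortest path given what is already available in the paper — one truncates: for any fixed $\mu$ and any pair $i,j$, only finitely many positive roots can contribute, and one can realize the truncated generating function $\sum_\mu \mathcal{K}_{\le N}(\lambda-\mu)\,x^\mu/\mu!$ (capping each root-multiplicity at some large $N$) as $\mathrm{N}$ applied to a product of geometric-like polynomials $\sum_{k=0}^N (x_a/x_b)^k$-type factors — but homogeneity forces care here, so more likely one passes to the Schubert/volume-polynomial picture: the stable-limit Schubert polynomial (a Schubert polynomial for a permutation in $\mathcal{S}_N$ with $N\gg 0$ that is "Grassmannian-like" so that $\mathfrak{S}_w$ is a product of the requisite linear forms) has the Kostant partition function values as its coefficients, and Theorem \ref{SchubertComplement} together with Lemma \ref{Translation} then yields the Lorentzian property, from which the $2\times 2$ minor argument of the proof of Theorem \ref{Discrete} applies verbatim.

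So the steps, in order, are: (1) identify $\dim\mathrm{M}(\lambda)_\mu$ with the Kostant partition function $\mathcal{K}(\lambda-\mu)$ via PBW; (2) reduce the desired inequality to the log-concavity of $\mathcal{K}$ along the direction $e_i-e_j$, observing it suffices to work with a finite window of weights; (3) exhibit, for that finite window, a Lorentzian polynomial whose normalization has the relevant $\mathcal{K}$-values as coefficients — either directly as a product of linear forms/stable polynomial, or by invoking Theorem \ref{SchubertComplement} for a suitable permutation; (4) extract the $2\times2$ principal minor of an iterated partial derivative exactly as in the proof of Theorem \ref{Discrete} and conclude via Cauchy interlacing.

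The main obstacle is step (3): making precise the claim that the infinite product $\prod_{a<b}(1-x_a/x_b)^{-1}$ "is Lorentzian" after the appropriate truncation and homogenization, since Lorentzian polynomials must be homogeneous with $\mathrm{M}$-convex support, and a priori neither the homogenization nor the support behavior of a truncated Kostant generating function is obvious. I would resolve this by working with the honest finite object — the volume polynomial of a variety cut out inside $(\mathbb{P}^{N-1})^m$ by the equations defining a matrix Schubert variety for a permutation whose Schubert polynomial equals the relevant product of $(x_a - x_b)$-type factors, which is exactly the Grassmannian/dominant permutation case of Theorem \ref{SchubertComplement} — so that no new geometry is needed and the argument closes by citing that theorem plus Lemma \ref{Translation}.
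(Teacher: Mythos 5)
Steps (1), (2), and (4) of your outline are sound, but the load-bearing step (3) has a genuine gap: you never actually produce the Lorentzian polynomial. Your proposed source for it --- a permutation $w\in\mathcal{S}_N$, $N\gg 0$, whose Schubert polynomial ``is a product of the requisite linear forms'' and ``has the Kostant partition function values as its coefficients,'' so that Theorem \ref{SchubertComplement} and Lemma \ref{Translation} apply --- is not exhibited and does not exist in general. After truncating the character $\underline{\mathrm{ch}}_\lambda=\prod_{i>j}(1+x_ix_j^{-1}+x_i^2x_j^{-2}+\cdots)$ and clearing denominators, the polynomial you need is $x^\delta\prod_{i>j}\bigl(x_j^{\alpha_j}+x_ix_j^{\alpha_j-1}+\cdots+x_i^{\alpha_j}\bigr)$, a product of two-variable complete homogeneous polynomials over all pairs $i>j$ with large exponents. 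Grassmannian Schubert polynomials are Schur polynomials, whose coefficients are Kostka numbers, and dominant ones are monomials; neither has Kostant partition function values as coefficients, and such products are not Schubert polynomials in general (already $(x_1+x_2)^2$ is not one). Your fallback remark that ``Ehrhart-type or volume-polynomial arguments give log-concavity'' of flow-polytope point counts is also not a proof as stated: log-concavity of volumes says nothing by itself about lattice-point counts. The paper's alternative proof supplies exactly the missing ingredient there: by the Lidskij formula \cite{BaldoniVergne} the Kostant partition function \emph{values} themselves are mixed volumes of Minkowski sums of polytopes, and the Alexandrov--Fenchel inequality then gives the inequality directly, with no Lorentzian machinery.

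The route you sketched and then abandoned over the homogeneity worry is in fact the paper's actual argument (Proposition \ref{VermaLorentzian}), and it needs no Schubert geometry. For fixed $\delta$ only finitely many terms of the product contribute to $\mathrm{N}(x^\delta\,\underline{\mathrm{ch}}_\lambda)$, so one may choose $\alpha$ large and write $\mathrm{N}(x^\delta\,\underline{\mathrm{ch}}_\lambda)=\partial^\beta\,\mathrm{N}\bigl(x^\delta\prod_{i>j}(x_j^{\alpha_j}+\cdots+x_i^{\alpha_j})\bigr)$ with $\beta_i=(m-i)\alpha_i$, using the operator identity $\partial^\beta\circ\mathrm{N}\circ x^\beta=\mathrm{N}$ from the proof of Lemma \ref{Translation}; the homogeneity issue disappears because each truncated factor is already homogeneous of degree $\alpha_j$. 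Each factor normalizes to $(x_i+x_j)^{\alpha_j}/\alpha_j!$, which is Lorentzian, so the normalized product is Lorentzian by \cite[Corollary 6.8]{Lorentzian}, and partial derivatives preserve the Lorentzian property. Your step (4) (the $2\times 2$ minor and Cauchy interlacing, or \cite[Proposition 9.4]{Lorentzian}) then closes the argument. With step (3) repaired in this way your outline is correct; as written, however, the crucial identification with a Schubert polynomial is false and the proof does not go through.
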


One may deduce Proposition \ref{VermaLogConcave} from its stronger variant Proposition \ref{VermaLorentzian} below.

\begin{proof}[Alternative proof]
The Poincar\'{e}--Birkhoff--Witt theorem shows that  the dimensions of the weight spaces are given by the Kostant partition function $p$:
 \[
 \dim \mathrm{M}(\lambda)_\mu=p(\mu-\lambda)=\text{number of ways to write $\mu-\lambda$ as a sum of negative roots}.
 \]  
Lidskij's volume formula for flow polytopes shows that all Kostant partition function evaluations are mixed volumes of Minkowski sums of polytopes \cite{BaldoniVergne}.
The Alexandrov--Fenchel inequality for mixed volumes \cite[Section 7.3]{Schneider} yields the desired log-concavity property.
\end{proof}

The  diagram below shows some of the weight multiplicities of the  irreducible  $\mathfrak{sl}_4(\mathbb{C})$-module  with highest weight $-2\varpi_1-3\varpi_2$.
We start from the highlighted vertex $\varpi_1-6\varpi_2-3\varpi_3$ and walk along negative root directions in  the hyperplane spanned by $e_2-e_1$ and $e_3-e_2$.  
In the shown region, 
the sequence of weight multiplicities along any line is  log-concave, as predicted by  Conjecture \ref{MultiplicityConjecture}.
\begin{figure}[h]
\includegraphics[scale=.8]{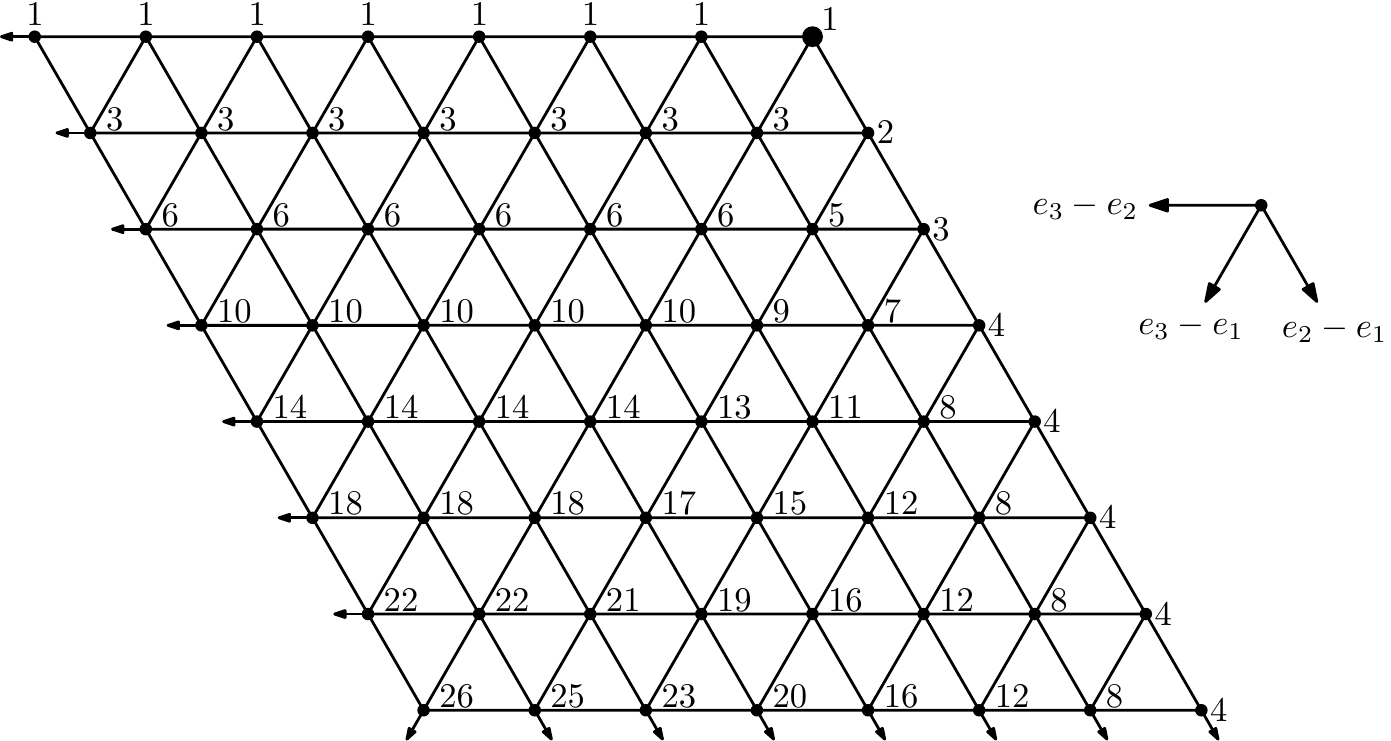}
\end{figure}

We note, however, that a naive analog of Conjecture \ref{MultiplicityConjecture} does not hold for symplectic Lie algebras.
In the weight diagram of the irreducible representation of $\mathfrak{sp}_4(\mathbb{C})$ with highest weight $2\varpi_2$ shown below,
\begin{figure}[h]
		\includegraphics[scale=.73]{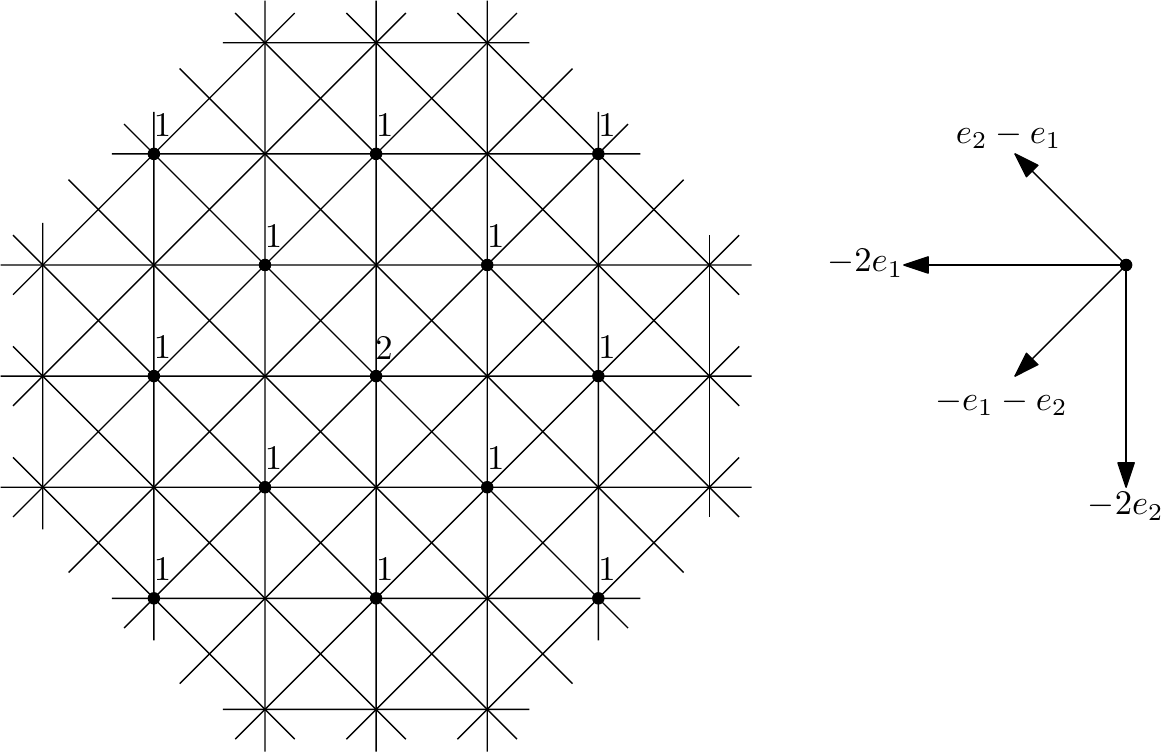}
\end{figure}
the weight multiplicities along the two diagonals of the square do not form log-concave sequences.\footnote{
Note that the Newton polytope of any homogeneous strongly log-concave polynomial is  necessarily a generalized permutohedron of type $A$: Any edge of the Newton polytope should be parallel to $e_i-e_j$ for some $i$ and $j$.}

To strengthen Conjecture \ref{MultiplicityConjecture}, we extend the normalization operator $\mathrm{N}$  to the space of Laurent generating functions
by the formula
\[
\mathrm{N}\Bigg(\sum_{\alpha \in \mathbb{Z}^n} c_\alpha x^\alpha \Bigg)=\sum_{\alpha \in \mathbb{N}^n} c_\alpha \frac{x^\alpha}{\alpha!}.
\]
For $\lambda \in \Lambda$, we introduce the Laurent generating functions 
\[
\mathrm{ch}_{\lambda}(x_1,\ldots,x_m)=\sum_{\mu\in \Lambda} \dim \mathrm{V}(\lambda)_\mu \hspace{0.5mm} x^{\mu-\lambda} \ \ \text{and} \ \  
\underline{\mathrm{ch}}_{\lambda}(x_1,\ldots,x_m)=\sum_{\mu\in \Lambda} \dim \mathrm{M}(\lambda)_\mu \hspace{0.5mm} x^{\mu-\lambda}. 
\]
Note that every monomial 
appearing in the shifted characters $\mathrm{ch}_\lambda$ and $\underline{\mathrm{ch}}_\lambda$
is a product of degree zero monomials of the form $x_ix_j^{-1}$ with $i>j$.

We  tested the following statement for $\lambda=-w \rho-\rho$ and $\delta=(1,\ldots,1)$, for all permutations $w$ in $\mathcal{S}_m$ for $m\leq 6$.\footnote{We point to \url{https://github.com/avstdi/Lorentzian-Polynomials} for code supporting the computations in Section \ref{SectionConjecture}.}

\begin{conjecture}\label{ConjectureIrrep}
The polynomial $\mathrm{N}(x^\delta \hspace{0.3mm} \mathrm{ch}_\lambda(x_1,\ldots,x_m))$ is Lorentzian for any $\lambda \in \Lambda$ and  $\delta \in \mathbb{N}^m$.
\end{conjecture}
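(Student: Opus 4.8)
The plan is to imitate the proof of Theorem \ref{MainTheorem}, realizing $\mathrm{N}(x^\delta\,\mathrm{ch}_\lambda)$ as the volume polynomial of an irreducible projective variety with respect to nef divisors, and then invoking \cite[Theorem 10.1]{Lorentzian}. The starting observation is that for a dominant weight the shifted character $\mathrm{ch}_\lambda$ is (up to the monomial shift by $\lambda$) a Schur polynomial, so Conjecture \ref{ConjectureIrrep} already holds in that case by Theorem \ref{MainTheorem} together with Lemma \ref{Translation}. The content is therefore entirely in the non-dominant weights, where $\mathrm{V}(\lambda)$ is infinite-dimensional but the shifted character is still a genuine polynomial: indeed every monomial in $\mathrm{ch}_\lambda$ is a product of the degree-zero monomials $x_i x_j^{-1}$ with $i>j$, so $\mathrm{N}(\mathrm{ch}_\lambda)$ truncates to the finitely many monomials lying in $\mathbb{N}^m$, and multiplying by $x^\delta$ with $\delta$ large enough records all of them. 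The first step I would carry out is to make this truncation precise: identify, for fixed $\lambda$ and $\delta$, exactly which weights $\mu$ contribute, and check that $\mathrm{N}(x^\delta\,\mathrm{ch}_\lambda)$ is homogeneous of the expected degree.

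Next I would look for a geometric model. For dominant $\lambda$ the relevant variety was a Fulton degeneracy locus inside $(\mathbb{P}^{n-1})^n$ cut out by Schubert-type rank conditions; the weight diagram of $\mathrm{V}(\lambda)$ for general $\lambda\in\Lambda$ is governed by the Weyl character formula, so I would try to produce $\mathrm{ch}_\lambda$ as a $\mathbb{Z}$-linear combination $\sum_{w\in\mathcal{S}_m}(-1)^{\ell(w)}\,[\text{something}]$ and hope that, after the normalization $\mathrm{N}$ and the truncation to $\mathbb{N}^m$, the alternating sum collapses to the fundamental class of a single irreducible variety. A cleaner route, and the one I would pursue first, is to note that $\dim\mathrm{V}(\lambda)_\mu$ can itself be written via Kostant's formula as an alternating sum of Kostant partition function values, and that (as in the alternative proof of Proposition \ref{VermaLogConcave}) Kostant partition function values are mixed volumes of flow polytopes; so the whole shifted character is, coefficient-wise, an alternating combination of mixed volumes. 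The task is then to show this particular combination is again a mixed volume — equivalently, to build the associated polytope / toric variety directly. Concretely I would try: take the product of projective spaces as before, let $Y_w$ be the degeneracy locus attached to $w$ via Fulton's formula, and check whether $\sum_w (-1)^{\ell(w)}[Y_{w\cdot(\text{shift})}]$ is, after intersecting with the appropriate power of $(x_1\mathrm{H}_1+\cdots)$, represented by an \emph{effective irreducible} cycle — perhaps a Schubert variety in a partial flag bundle, or a degeneracy locus for the non-generic map $\Psi$ reflecting the antidominant twist.

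The main obstacle, as I see it, is precisely the \emph{irreducibility} requirement in \cite[Theorem 10.1]{Lorentzian}: for dominant $\lambda$ one gets a matrix Schubert variety, which is known to be irreducible and of expected codimension, but for a general (in particular antidominant) weight the naive analog would be a union of such loci with signs, and an alternating sum of classes of irreducible varieties need not be the class of an irreducible variety. So the crux will be to find the \emph{right} variety — I expect it to live not in $(\mathbb{P}^{n-1})^n$ but in a flag bundle or in a larger degeneracy-locus setup, chosen so that a single Schubert- or Kempf-type variety has fundamental class $\mathrm{ch}_\lambda(\mathrm{H})\cap[X]$ on the nose. A secondary obstacle is verifying $\mathrm{M}$-convexity of the support of $\mathrm{N}(x^\delta\,\mathrm{ch}_\lambda)$ independently (this would at least confirm that no Lorentzian polynomial is being ruled out, and the footnote about Newton polytopes being generalized permutohedra of type $A$ is consistent with that); this can likely be extracted from the string-polytope or Gelfand--Tsetlin description of $\mathrm{V}(\lambda)$, but it is not automatic. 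If the direct geometric construction resists, a fallback plan is to prove Conjecture \ref{ConjectureIrrep} only for $\lambda=-w\rho-\rho$ by relating $\mathrm{ch}_{-w\rho-\rho}$ to a product or quotient of the Verma character $\underline{\mathrm{ch}}$ (which is handled by Proposition \ref{VermaLorentzian}) and Schubert polynomials, and then closing under the operations of \cite[Section 6]{Lorentzian}; but I expect the clean statement for all $\lambda\in\Lambda$ to require the geometric input above.
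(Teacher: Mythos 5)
The statement you are addressing is a conjecture in the paper (Conjecture \ref{ConjectureIrrep}): the authors do not prove it. They establish only the two extreme cases --- $\lambda$ dominant, via Theorem \ref{MainTheorem} and Lemma \ref{Translation}, and $\lambda$ antidominant, via Proposition \ref{VermaLorentzian} --- and report computational verification for $\lambda=-w\rho-\rho$, $\delta=(1,\ldots,1)$, $m\le 6$. Your proposal is likewise not a proof but a research plan, and the cases it actually settles (dominant and antidominant) are exactly the ones already in the paper; note also that the paper's proof of the antidominant case does not go through mixed volumes at all, but writes $\underline{\mathrm{ch}}_\lambda$ as a product of geometric series, reduces to the normalized binomials $\frac{1}{\alpha_j!}(x_i+x_j)^{\alpha_j}$, and invokes the closure properties of Lorentzian polynomials under normalized products and partial derivatives; the Alexandrov--Fenchel argument appears only as an alternative proof of the weaker log-concavity statement (Proposition \ref{VermaLogConcave}).

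Beyond the incompleteness you yourself flag (an alternating sum of classes of irreducible degeneracy loci is not the class of an irreducible variety, so \cite[Theorem 10.1]{Lorentzian} does not apply as stated), there is a concrete mathematical misstep in your plan: for general $\lambda\in\Lambda$, neither the Weyl character formula nor Kostant's multiplicity formula computes $\dim\mathrm{V}(\lambda)_\mu$. Those formulas govern the finite-dimensional (dominant) case; for an arbitrary highest weight, $\mathrm{V}(\lambda)$ is the simple quotient of the Verma module and its character is an alternating combination of Verma characters with coefficients given by Kazhdan--Lusztig polynomial evaluations, not by signs indexed by Weyl group elements --- the paper says precisely this (``the homogeneous polynomial $\mathrm{N}(x^\delta\,\mathrm{ch}_\lambda)$ can be computed using the Kazhdan--Lusztig theory''). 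So the proposed rewriting of the shifted character as a signed sum of Kostant partition function values, and hence as a signed combination of mixed volumes, is not available in the generality the conjecture demands, and the search for a single degeneracy locus whose fundamental class realizes $\mathrm{ch}_\lambda(\mathrm{H})\cap[X]$ would have to absorb this KL-theoretic structure from the start. Even the $\mathrm{M}$-convexity of the support of $\mathrm{N}(x^\delta\,\mathrm{ch}_\lambda)$, which you correctly identify as a necessary check, is not established in the paper for general $\lambda$. In short: the conjecture remains open, and your plan, while pointing in a reasonable direction, does not close it.
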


For example, when $m=4$ and  $\lambda=-w \rho-\rho$  for the transposition $w=(1,2)$, we have
\begin{multline*}
\mathrm{N}(x_1x_2x_3x_4 \mathrm{ch}_\lambda(x_1,x_2,x_3,x_4))=\frac{4}{24} x_4^4+\frac{2}{6} x_1 x_4^3+\frac{2}{6} x_2 x_4^3+\frac{4}{6} x_3 x_4^3+\frac{3}{4} x_3^2 x_4^2 \\ +\frac{1}{2}x_1 x_2 x_4^2  +\frac{2}{2} x_1 x_3 x_4^2+\frac{2}{2} x_2 x_3 x_4^2+\frac{1}{6}x_3^3 x_4+\frac{1}{2}x_1 x_3^2 x_4+\frac{1}{2}x_2 x_3^2 x_4+\frac{1}{1}x_1 x_2 x_3 x_4,
\end{multline*}
which is a Lorentzian polynomial.
In general, the homogeneous polynomial $\mathrm{N}(x^\delta \mathrm{ch}_\lambda)$ can be computed using the Kazhdan--Lusztig theory \cite[Chapter 8]{Humphreys}.

Theorem \ref{MainTheorem} and Lemma \ref{Translation} show that Conjecture \ref{ConjectureIrrep} holds for any $\delta$ when $\lambda$ is dominant.
We show that Conjecture \ref{ConjectureIrrep}  holds  for any $\delta$ when $\lambda$ is antidominant.

\begin{proposition}\label{VermaLorentzian}
The polynomial $\mathrm{N}(x^\delta \hspace{0.3mm} \underline{\mathrm{ch}}_\lambda(x_1,\ldots,x_m))$ is Lorentzian for any $\lambda \in \Lambda$ and  $\delta \in \mathbb{N}^m$.
\end{proposition}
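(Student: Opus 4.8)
The plan is to reduce the statement to Theorem \ref{MainTheorem} by exploiting two structural facts: first, that weight multiplicities of Verma modules are governed by the Kostant partition function, and second, that generating functions of such partition-function data are, up to the normalization operator $\mathrm{N}$, volume polynomials of flow polytopes. Concretely, by the Poincar\'{e}--Birkhoff--Witt theorem one has $\dim \mathrm{M}(\lambda)_\mu = p(\lambda-\mu)$, where $p$ is the Kostant partition function for $\mathfrak{sl}_m(\C)$, so
\[
\underline{\mathrm{ch}}_\lambda(x_1,\ldots,x_m) = \sum_{\mu \in \Lambda} p(\lambda - \mu)\, x^{\mu - \lambda} = \sum_{\beta} p(\beta)\, x^{-\beta},
\]
where $\beta$ ranges over nonnegative integer combinations of positive roots $e_i - e_j$ with $i < j$. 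Notice that this Laurent series is independent of $\lambda$ itself; only the shift by $\delta$ in $x^\delta\underline{\mathrm{ch}}_\lambda$ matters, and by Lemma \ref{Translation} the Lorentzian property of $\mathrm{N}(x^\delta\underline{\mathrm{ch}}_\lambda)$ is independent of the choice of $\delta \in \mathbb{N}^m$ (as long as it is large enough to clear denominators). So the task is to show that $\mathrm{N}(x^\delta\underline{\mathrm{ch}}_\lambda)$, which is a finite homogeneous polynomial once we truncate by homogeneity in the appropriate degree, is Lorentzian.

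Next I would realize this normalized generating function as a volume polynomial, mimicking the proof of Theorem \ref{SchubertComplement}. The key input is Lidskij's volume formula together with the Baldoni--Vergne theory cited in the alternative proof of Proposition \ref{VermaLogConcave}: Kostant partition function values are mixed volumes of Minkowski sums of flow polytopes, and more precisely the full generating function $\sum_\beta p(\beta) x^{-\beta}$ assembles into a volume polynomial of a flow polytope (equivalently, of a toric or Gelfand--Tsetlin-type variety) with respect to a tuple of nef divisor classes. The cleanest route is probably to invoke \cite[Theorem 10.1]{Lorentzian}: one exhibits an irreducible projective variety $Y$ and nef classes $\mathrm{H}_1,\ldots,\mathrm{H}_m$ such that $\mathrm{vol}_{Y,\mathrm{H}} = \mathrm{N}(x^\delta\underline{\mathrm{ch}}_\lambda)$, using that the relevant intersection numbers $\int_X \mathrm{H}^\mu$ recover the Kostant partition function values. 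Alternatively, one may argue more directly that the polynomial $\mathrm{N}(x^\delta\underline{\mathrm{ch}}_\lambda)$ is (after truncation) a product or limit of products of Lorentzian building blocks — each factor $\prod_{i<j}(1 - x_i x_j^{-1})^{-1}$ corresponds, after normalization, to a simple volume polynomial — and then apply the closure of Lorentzian polynomials under products \cite[Corollary 5.5]{Lorentzian} and normalization \cite[Corollary 6.7]{Lorentzian}.

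The main obstacle I anticipate is the bookkeeping needed to pass from the \emph{infinite} Laurent series $\underline{\mathrm{ch}}_\lambda$ to a genuine homogeneous polynomial to which Definition \ref{LorentzianDefinition} applies: one must check that, after multiplying by $x^\delta$ and applying $\mathrm{N}$, the support lies in a single hyperplane $\sum x_i = d$ (which it does not on the nose, since $\underline{\mathrm{ch}}_\lambda$ is supported on an infinite cone), and therefore one has to homogenize or truncate correctly. The resolution is that Lorentzian-ness is tested degree by degree on the homogeneous components, and the normalization $\mathrm{N}$ together with a generic large shift $\delta$ makes each homogeneous piece a bona fide volume polynomial of the appropriate dimension; the Alexandrov--Fenchel / Lorentzian machinery then applies uniformly. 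A secondary technical point is verifying that the flow polytope (equivalently, the variety $Y$) is irreducible and that the divisor classes are nef, but this follows from the standard theory of flow polytopes and their associated toric varieties, exactly as irreducibility of the matrix Schubert variety $Y_w$ was used in the proof of Theorem \ref{SchubertComplement}.
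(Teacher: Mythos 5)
Your starting point is right (PBW gives $\underline{\mathrm{ch}}_\lambda=\prod_{i>j}(1+x_ix_j^{-1}+x_i^2x_j^{-2}+\cdots)$, independent of $\lambda$), but the argument you then propose has a genuine gap at its crux. Your primary route --- realizing $\mathrm{N}(x^\delta\,\underline{\mathrm{ch}}_\lambda)$ as the volume polynomial of some irreducible variety built from flow polytopes and invoking \cite[Theorem 10.1]{Lorentzian} --- is never carried out: you do not construct the variety $Y$ or the nef classes $\mathrm{H}_1,\ldots,\mathrm{H}_m$, nor do you verify that the relevant intersection numbers reproduce the Kostant partition function values, and the Lidskij/Baldoni--Vergne statement you cite (individual evaluations of $p$ are mixed volumes) does not by itself assemble the whole truncated generating function into a single volume polynomial. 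The point you flag as the ``main obstacle'' --- passing from the infinite Laurent series to an honest homogeneous polynomial --- is exactly where the work lies, and your proposed resolution (``the normalization together with a large shift $\delta$ makes each homogeneous piece a bona fide volume polynomial'') asserts the conclusion rather than proving it. The paper resolves this concretely: since only terms of $\underline{\mathrm{ch}}_\lambda$ of degree at least $-\delta$ survive the operator $\mathrm{N}(x^\delta\cdot\,)$, one may truncate each geometric factor at a level $\alpha_j$ depending on $\delta$, so that
\[
\mathrm{N}(x^\delta\,\underline{\mathrm{ch}}_\lambda)=\mathrm{N}\Big(x^\delta x^{-\beta}\prod_{i>j}\big(x_j^{\alpha_j}+x_ix_j^{\alpha_j-1}+\cdots+x_i^{\alpha_j}\big)\Big),\qquad \beta_i=(m-i)\alpha_i ,
\]
and then recognize the right-hand side as $\partial^\beta$ applied to the normalized product of $x^\delta$ with finitely many factors whose normalizations are the Lorentzian polynomials $\tfrac{1}{\alpha_j!}(x_i+x_j)^{\alpha_j}$; the result follows from \cite[Corollary 6.8]{Lorentzian} together with closure of Lorentzian polynomials under partial derivatives. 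Your ``alternative'' sentence about building blocks $\prod(1-x_ix_j^{-1})^{-1}$ is pointing in this direction, but without the explicit truncation, the identity $\partial^\beta\circ\mathrm{N}\circ x^\beta=\mathrm{N}$, and the identification of the normalized factors, it is not yet a proof.

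A secondary error: you invoke Lemma \ref{Translation} to claim the Lorentzian property of $\mathrm{N}(x^\delta\,\underline{\mathrm{ch}}_\lambda)$ is ``independent of $\delta$ as long as it is large enough to clear denominators.'' Lemma \ref{Translation} applies to a fixed polynomial $f$, whereas $\underline{\mathrm{ch}}_\lambda$ is an infinite series and no $\delta$ clears its denominators; different $\delta$ produce genuinely different truncations not related by monomial multiplication. The only implication available is one-sided (the statement for a componentwise larger $\delta'$ implies it for $\delta$, by applying $\partial^{\delta'-\delta}$), as remarked in the paper, so the proposition must be proved for arbitrarily large $\delta$, not just one convenient choice.
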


\begin{proof}
Recall that the dimensions of the weight spaces of $\mathrm{M}(\lambda)$ are given by the Kostant partition function $p$.
In other words, we have
\[
\underline{\mathrm{ch}}_{\lambda}(x_1,\ldots,x_m)=\prod_{i>j} (1+x_ix_j^{-1}+x_i^2x_j^{-2}+\cdots ).
\]
Note that, in the expansion of the above product,\footnote{It is clear that the product is well-defined. Officially, the product occurs in the ring of formal characters of the category $\mathcal{O}$ of $\mathfrak{sl}_m(\mathbb{C})$-modules, denoted $\mathcal{X}$ in \cite[Section 1.15]{Humphreys}.} only the terms of degree at least $-\delta$ contribute to $\mathrm{N}(x^\delta \underline{\mathrm{ch}}_\lambda)$.
Therefore, we may choose a suitably large $\alpha \in \mathbb{N}^m$ depending on $\delta \in \mathbb{N}^m$ so that
\[
\mathrm{N}(x^\delta \underline{\mathrm{ch}}_\lambda)=\mathrm{N}(x^\delta x^{-\beta}\prod_{i>j} (x_j^{\alpha_j}+x_i x_j^{\alpha_j-1}+\cdots+x_i^{\alpha_j})),
\ \ \text{where $\beta_i=(m-i)\alpha_i$ for all $i$.}
\]
Observe that the right-hand side is the $\beta$-th partial derivative of the normalized product of $x^\delta$ and
$\sum_k x_i^{\alpha_j-k}x_j^k$,
whose normalization is the Lorentzian polynomial
\[
 \mathrm{N} (x_j^{\alpha_j}+x_ix_j^{\alpha_j-1}+\cdots+x_i^{\alpha_j})=\frac{1}{\alpha_j!}(x_i+x_j)^{\alpha_j}.
\]
The conclusion now follows from \cite[Corollary 6.8]{Lorentzian}.
\end{proof}

Conjecture \ref{MultiplicityConjecture} for $\lambda$ and $\mu$ 
follows from Conjecture \ref{ConjectureIrrep} for $\lambda$ and a sufficiently large $\delta$.
Conjecture \ref{ConjectureIrrep} for $\lambda$ and $\delta$ follows from
Conjecture \ref{ConjectureIrrep} for $\lambda$ and any $\delta'$ larger than $\delta$ componentwise.

\subsection{Schubert polynomials}\label{SectionSchubert}

For $w \in \mathcal{S}_n$ and $\mu \in \mathbb{Z}^n$,  we define the number $K_{w\mu}$ by 
\[
\mathfrak{S}_w(x_1,\ldots,x_n)=\sum_\mu K_{w\mu} x^\mu.
\]
As before, for $\mu \in \mathbb{Z}^n$ and distinct $i,j \in [m]$, we set
\[
\mu(i,j)=\mu+e_i-e_j.
\]
We note that Theorem \ref{Discrete} can be strengthened as follows.

\begin{proposition}
For any $w \in \mathcal{S}_n$ and any $\mu \in \mathbb{N}^n$, we have
\[
K_{w\mu}^2\geq K_{w\mu(i,j)}K_{w\mu(j,i)} \ \ \text{for any $i,j \in [n]$.}
\]
\end{proposition}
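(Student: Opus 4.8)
The plan is to deduce this statement from Theorem~\ref{SchubertComplement} in exactly the same way that Theorem~\ref{Discrete} was deduced from Theorem~\ref{MainTheorem}. The key observation is that the coefficients $K_{w\mu}$ of the Schubert polynomial $\mathfrak{S}_w$ are, up to the normalization operator $\mathrm{N}$ and the involution $x_i \mapsto x_i^{-1}$, precisely the coefficients of the Lorentzian polynomial $\mathfrak{S}_w^\vee$. Concretely, writing $\delta = (n-1,\ldots,n-1)$, the coefficient of $x^{\delta-\mu}/(\delta-\mu)!$ in $\mathfrak{S}_w^\vee = \mathrm{N}(x^\delta \mathfrak{S}_w(x_1^{-1},\ldots,x_n^{-1}))$ equals $K_{w\mu}$, whenever $\delta - \mu \in \mathbb{N}^n$; so the Lorentzian polynomial $\mathfrak{S}_w^\vee$ literally repackages the array $(K_{w\mu})$.

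First I would reduce to the case where $\mu, \mu(i,j), \mu(j,i)$ all have entries in $\{0,1,\ldots,n-1\}$ after reflection, i.e.\ where the exponent vectors $\delta-\mu$, $\delta - \mu(i,j)$, $\delta - \mu(j,i)$ lie in $\mathbb{N}^n$; outside this range the relevant $K_{w\mu}$ vanish (since $\mathfrak{S}_w$ has degree at most $\binom{n}{2}$ with each variable appearing to power at most $n-1$) and the inequality is trivial. Having made this reduction, set $\nu = \delta - \mu$ and note $\nu(j,i) = \nu - e_i + e_j$ corresponds to $\mu(i,j)$, and similarly for the other. Now I would apply condition~(1) of Definition~\ref{LorentzianDefinition} to $h = \mathfrak{S}_w^\vee$: pick $\kappa \in \mathbb{N}^n$ with $\kappa \le \nu$ componentwise, $\kappa_i = \nu_i - 1$, $\kappa_j = \nu_j - 1$, and $\kappa_k = \nu_k$ for $k \ne i,j$, and take the partial derivative $\partial^\kappa h$. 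Because $h$ is homogeneous of the appropriate degree, $\partial^\kappa h$ is a quadratic form in $x_i, x_j$ (all other variables having been differentiated out completely), and by the Lorentzian property this $2\times 2$ symmetric matrix has at most one positive eigenvalue. Its off-diagonal and diagonal entries are, up to positive factorial constants coming from $\mathrm{N}$, exactly $K_{w\mu}$, $K_{w\mu(i,j)}$, $K_{w\mu(j,i)}$; the nonpositivity of the determinant gives $K_{w\mu}^2 \ge K_{w\mu(i,j)}K_{w\mu(j,i)}$ after checking that the factorial prefactors match up (they do, since differentiating $x_i^{a}x_j^{b}/(a!\,b!)$ down to the quadratic level produces the same combinatorial normalization on all three relevant monomials).

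The one genuine subtlety — and the step I expect to need the most care — is verifying that the support condition and the degree bookkeeping line up so that the $2 \times 2$ principal submatrix indexed by $i,j$ is the one we want and is not identically zero in a way that makes the inequality vacuous rather than meaningful. This is handled exactly as in the proof of Theorem~\ref{Discrete}: one invokes Cauchy's interlacing theorem to pass from ``$\partial^\kappa h$ has at most one positive eigenvalue'' to ``the $2\times 2$ principal submatrix corresponding to $i$ and $j$ is either identically zero or has exactly one positive eigenvalue,'' and in either case the $2\times 2$ principal minor is $\le 0$, which is precisely the claimed inequality. Since $\mathfrak{S}_w^\vee$ is Lorentzian by Theorem~\ref{SchubertComplement}, the argument goes through verbatim with $\mathfrak{S}_w^\vee$ in place of $\mathrm{N}(s_\lambda)$, and the proposition follows; in particular it specializes to Theorem~\ref{Discrete} via the Grassmannian permutation identity $\mathfrak{S}_w = s_\kappa$ used in the proof of Theorem~\ref{MainTheorem}.
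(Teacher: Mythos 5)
Your proposal is correct and follows essentially the same route as the paper: both deduce the inequality from the Lorentzian property of $\mathfrak{S}_w^\vee$ (Theorem \ref{SchubertComplement}), using the fact that its normalized coefficients are the numbers $K_{w\mu}$ reindexed by $\mu \mapsto (n-1,\ldots,n-1)-\mu$. The only difference is that the paper invokes \cite[Proposition 9.4]{Lorentzian} as a black box, whereas you re-derive that coefficient inequality by the same Hessian/Cauchy-interlacing argument used for Theorem \ref{Discrete}, with correct handling of the boundary cases where the right-hand side vanishes.
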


\begin{proof}
	By Theorem \ref{SchubertComplement}, the polynomial $\mathfrak{S}_w^\vee$ is Lorentzian. 
	The inequality follows from \cite[Proposition 9.4]{Lorentzian} applied to the  Lorentzian polynomial $\mathfrak{S}_w^\vee$.
\end{proof}

Are normalized Schubert polynomials Lorentzian?
We  tested the following statement for all permutations in $\mathcal{S}_n$ for $n\leq 8$.

\begin{conjecture}\label{ConjectureSchubert}
The polynomial $\mathrm{N}(\mathfrak{S}_w(x_1,\ldots,x_n))$ is Lorentzian for any $w \in \mathcal{S}_n$.
\end{conjecture}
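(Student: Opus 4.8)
The plan is to mirror, as closely as possible, the proof of Theorem~\ref{SchubertComplement}, realizing the normalized Schubert polynomial $\mathrm{N}(\mathfrak{S}_w)$ itself (rather than its complement $\mathfrak{S}_w^\vee$) as a volume polynomial. Concretely, I would try to find, for each $w \in \mathcal{S}_n$, an irreducible projective variety $Z = Z_w$ inside a product of projective spaces (or a flag-bundle-type ambient space) together with nef classes $\mathrm{H}_1,\ldots,\mathrm{H}_n$ so that
\[
\operatorname{vol}_{Z,\mathrm{H}}(x_1,\ldots,x_n) = \mathrm{N}(\mathfrak{S}_w(x_1,\ldots,x_n)),
\]
after which Lorentzianity is immediate from \cite[Theorem 10.1]{Lorentzian}. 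The natural first move is to pass through the complement: since $\mathfrak{S}_w^\vee = \mathrm{N}(x_1^{n-1}\cdots x_n^{n-1}\mathfrak{S}_w(x_1^{-1},\ldots,x_n^{-1}))$ is already known to be Lorentzian, one would want a relation between $\mathrm{N}(\mathfrak{S}_w)$ and $\mathrm{N}(\mathfrak{S}_{w'})$ for some other permutation $w'$ — ideally $w' = w_\circ w w_\circ$ or $w' = w^{-1}$, whose Schubert polynomial is a genuine polynomial obtained from $\mathfrak{S}_w$ by a coordinate reversal or transpose. The obstruction here is that reversing the variables in a Schubert polynomial does not send it to another Schubert polynomial in general; only for Grassmannian (or more generally dominant/vexillary) $w$ does the Schur-type self-duality make Lemma~\ref{Translation} applicable, which is exactly why Theorem~\ref{MainTheorem} works but the general conjecture is open.

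Failing a direct geometric model, the second approach I would pursue is inductive via divided differences. One has $\mathfrak{S}_{ws_i} = \partial_i \mathfrak{S}_w$ when $\ell(ws_i) < \ell(w)$, and the operator $\partial_i$ interacts with $\mathrm{N}$ in a controlled way: a short computation shows $\mathrm{N} \circ \partial_i = D_i \circ \mathrm{N}$ for some explicit operator $D_i$ built from $\frac{\partial}{\partial x_i}$, $\frac{\partial}{\partial x_{i+1}}$, and the transposition $s_i$. The strategy would be to start from the base case $w = w_\circ$, where $\mathrm{N}(\mathfrak{S}_{w_\circ}) = \mathrm{N}(x_1^{n-1}\cdots x_{n-1}) = \frac{1}{(n-1)!\cdots 1!}x_1^{n-1}\cdots x_{n-1}$, a monomial, hence trivially Lorentzian, and then to show that the operator $D_i$ (restricted to homogeneous polynomials of the relevant degree) preserves the Lorentzian cone. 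This reduces the conjecture to a clean linear-algebra statement about a single degenerate "averaged difference quotient" operator in two variables, tensored with the identity — the kind of statement that \cite[Section 6]{Lorentzian} is designed to handle. The main obstacle is precisely that $D_i$ does \emph{not} obviously preserve Lorentzianity: divided differences can create negative coefficients at intermediate stages even when the final Schubert polynomial has nonnegative coefficients, so one cannot simply induct on arbitrary Lorentzian inputs; one must stay within the (much smaller) class of normalized Schubert polynomials and exploit their special combinatorial structure (e.g. their supports are known to be generalized permutohedra by work on Schubitopes).

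A third, more representation-theoretic angle — consistent with the "ubiquity" theme of Section~\ref{SectionConjecture} — is to seek a flagged-tableau or Kohnert-move positive formula for $K_{w\mu}$ and verify the $\mathrm{M}$-convexity of the support together with the one-positive-eigenvalue condition directly, perhaps by exhibiting $\mathrm{N}(\mathfrak{S}_w)$ as a (limit of a) product/pushforward of simpler Lorentzian building blocks indexed by the pipe dreams of $w$. I would expect this to succeed for dominant and vexillary $w$ (where $\mathfrak{S}_w$ is a flagged Schur polynomial, and a product-of-linear-forms volume model like the one in the proof of Proposition~\ref{VermaLorentzian} is available) and to be the genuinely hard open core for general $w$. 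So my honest assessment is that the right theorem to aim for first is the vexillary case via the volume-polynomial method, and that the full conjecture hinges on the Lorentzian-preservation property of the divided-difference operators $D_i$, which I would flag as the central difficulty.
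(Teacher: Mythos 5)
You should first note that the statement you were asked to prove is Conjecture~\ref{ConjectureSchubert}: the paper itself offers no proof, only computational verification for all $w\in\mathcal{S}_n$ with $n\le 8$ and a partial result, Proposition~\ref{PropositionAvoidance}, for permutations avoiding the patterns $1423$ and $1432$. Your proposal, by your own admission, is likewise not a proof: each of your three strategies ends at an obstruction you yourself identify (the complementation trick $\mathfrak{S}_w^\vee\leftrightarrow\mathrm{N}(\mathfrak{S}_w)$ via Lemma~\ref{Translation} only closes up for Grassmannian-type $w$; the induced operator $D_i$ with $\mathrm{N}\circ\partial_i=D_i\circ\mathrm{N}$ is not known to preserve the Lorentzian cone, and indeed cannot be applied blindly since $\partial_i$ creates intermediate negativity; the pipe-dream/flagged-module route is left as a hope). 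So there is no argument here to check for correctness --- the genuine gap is simply that no complete argument is given, which is consistent with the problem being open. Your diagnosis of the central difficulty (Lorentzian-preservation for divided-difference-type operators) is reasonable and matches the reason the paper stops at a conjecture.

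It is worth comparing your suggested partial results with what the paper actually establishes, because the mechanisms differ. You predict the vexillary case should follow from a volume-polynomial model; the paper instead proves the case of $1423$- and $1432$-avoiding permutations (a different pattern class) by an entirely combinatorial route: by \cite{FMS} the support of $\mathfrak{S}_w$ consists of the lattice points of a Minkowski sum of matroid polytopes, the generating function of the associated $\mathrm{M}$-convex set in $n^2$ variables is Lorentzian by \cite[Theorem~7.1]{Lorentzian}, Lorentzianity survives the nonnegative linear substitution $x_{ij}\mapsto x_i$ by \cite[Theorem~2.10]{Lorentzian}, and for these pattern-avoiding (``zero-one'') permutations the specialization equals $\mathfrak{S}_w$ itself by \cite{FMS2} and \cite{FanGuo}; then $\mathrm{N}(\mathfrak{S}_w)$ is Lorentzian by \cite[Corollary~6.7]{Lorentzian}. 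Note this shows the \emph{unnormalized} $\mathfrak{S}_w$ is Lorentzian in those cases, something false in general (the paper records that $\mathfrak{S}_{1423}$ and $\mathfrak{S}_{1432}$ are not Lorentzian), so any attack on the full conjecture must work with the normalization, exactly as you anticipate. If you want to push further, your divided-difference idea would need a precise statement of $D_i$ and a proof that it preserves Lorentzianity at least on the subclass of polynomials with $\mathrm{M}$-convex (Schubitope) support arising along the induction; as it stands, that is the open core, not a step you can cite.
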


More generally, we conjecture that, for double Schubert polynomials  \cite[Section 15.5]{MillerSturmfels},
\[
\mathrm{N}(\mathfrak{S}_w(x_1,\ldots,x_n,-y_1,\ldots,-y_n)) \ \ \text{is Lorentzian for any $w \in \mathcal{S}_n$.}
\] 
This would imply that the support of any double Schubert polynomial is $\mathrm{M}$-convex, and hence ``saturated'' \cite[Conjecture 5.2]{MTY}.

\begin{proposition}\label{SchubertSupport}
The support of $\mathfrak{S}_w(x_1,\ldots,x_n)$ is $\mathrm{M}$-convex for any $w \in \mathcal{S}_n$.
\end{proposition}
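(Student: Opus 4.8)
The plan is to deduce $\mathrm{M}$-convexity of the support of $\mathfrak{S}_w$ directly from the Lorentzian property established in Theorem~\ref{SchubertComplement}, using the fact that the support of a Lorentzian polynomial is $\mathrm{M}$-convex (condition (1) of Definition~\ref{LorentzianDefinition}) together with the elementary observation that $\mathrm{M}$-convexity is preserved under the two operations that relate $\mathfrak{S}_w$ to $\mathfrak{S}_w^\vee$: coordinatewise reflection $\mu \mapsto (n-1,\ldots,n-1)-\mu$ and passage to the normalization $\mathrm{N}$. More precisely, I would argue as follows. First recall that $\mathfrak{S}_w^\vee = \mathrm{N}\big(x_1^{n-1}\cdots x_n^{n-1}\,\mathfrak{S}_w(x_1^{-1},\ldots,x_n^{-1})\big)$ is Lorentzian, hence its support $\mathrm{J}^\vee \subseteq \mathbb{N}^n$ is $\mathrm{M}$-convex. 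Since $\mathrm{N}$ does not change which monomials appear (it only rescales coefficients by positive factors $1/\mu!$), the support of $\mathfrak{S}_w^\vee$ equals the support of $x_1^{n-1}\cdots x_n^{n-1}\,\mathfrak{S}_w(x_1^{-1},\ldots,x_n^{-1})$. The latter monomials are exactly $(n-1,\ldots,n-1)-\mu$ as $\mu$ ranges over the support $\mathrm{J}$ of $\mathfrak{S}_w$; note all such $\mu$ lie in $\{0,1,\ldots,n-1\}^n$ because $\mathfrak{S}_w$ has degree at most $\binom{n}{2}$ with each exponent bounded by $n-1$, so this reflection genuinely maps $\mathbb{N}^n$ into $\mathbb{N}^n$ and back.

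The remaining point is purely combinatorial: the image of an $\mathrm{M}$-convex set under an affine reflection $r(\mu) = c - \mu$ (with $c = (n-1,\ldots,n-1)$) is again $\mathrm{M}$-convex. To see this, take $\alpha, \beta \in \mathrm{J}$ and an index $i$ with $\alpha_i > \beta_i$; I want an index $j$ with $\alpha_j < \beta_j$ and $\alpha - e_i + e_j,\ \beta - e_j + e_i \in \mathrm{J}$. Writing $\alpha = r(\alpha^\vee)$, $\beta = r(\beta^\vee)$ with $\alpha^\vee, \beta^\vee \in \mathrm{J}^\vee$, the condition $\alpha_i > \beta_i$ becomes $\alpha^\vee_i < \beta^\vee_i$, i.e. $\beta^\vee_i > \alpha^\vee_i$. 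Applying the $\mathrm{M}$-convexity of $\mathrm{J}^\vee$ to the pair $(\beta^\vee, \alpha^\vee)$ at index $i$ produces an index $j$ with $\beta^\vee_j < \alpha^\vee_j$ and $\beta^\vee - e_i + e_j \in \mathrm{J}^\vee$, $\alpha^\vee - e_j + e_i \in \mathrm{J}^\vee$. Applying $r$ and using $r(\gamma - e_a + e_b) = r(\gamma) + e_a - e_b$, this says precisely $\alpha_j < \beta_j$, $\beta - e_i + e_j \in \mathrm{J}$ wait— I should be careful to line up which of $\alpha,\beta$ the exchange members land on: $r(\beta^\vee - e_i + e_j) = \beta + e_i - e_j$ and $r(\alpha^\vee - e_j + e_i) = \alpha + e_j - e_i = \alpha - e_i + e_j$, so indeed $\alpha - e_i + e_j \in \mathrm{J}$; and $\beta + e_i - e_j = \beta - e_j + e_i \in \mathrm{J}$. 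This is exactly the required exchange, so $\mathrm{J}$ is $\mathrm{M}$-convex.

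I expect the only genuine subtlety to be bookkeeping: making sure the reflection $\mu \mapsto (n-1,\ldots,n-1)-\mu$ really is a bijection between the two supports (which needs the degree bound on $\mathfrak{S}_w$ in each variable, a standard fact — e.g.\ all exponents in a Schubert polynomial for $w \in \mathcal{S}_n$ are at most $n-1$), and swapping the roles of $\alpha$ and $\beta$ correctly when transporting the exchange axiom across the reflection. There is no hard analytic or geometric content here beyond what Theorem~\ref{SchubertComplement} already supplies; this proposition is essentially a corollary of it. An alternative, should one prefer to avoid the reflection argument, is to invoke directly that $\mathrm{M}$-convexity of the support of a polynomial $h$ is equivalent to $\mathrm{M}$-convexity of the support of $x_1^{n-1}\cdots x_n^{n-1} h(x^{-1})$ whenever the latter has nonnegative exponents, and cite \cite[Corollary 6.8]{Lorentzian} or the discussion in \cite{Lorentzian} on the duality operation; but the self-contained reflection argument above seems cleanest.
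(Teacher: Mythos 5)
Your proposal is correct and follows essentially the same route as the paper: deduce $\mathrm{M}$-convexity of the support of $\mathfrak{S}_w^\vee$ from Theorem~\ref{SchubertComplement}, then transfer it to $\mathfrak{S}_w$ via the complementation $\mu \mapsto (n-1,\ldots,n-1)-\mu$, which the paper records as the general fact that $\mathrm{M}$-convexity of the support of $h$ implies that of $x^\mu h(x_1^{-1},\ldots,x_n^{-1})$. The only difference is that you write out the exchange-axiom verification that the paper leaves as ``straightforward to check,'' and your bookkeeping there is correct.
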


Proposition \ref{SchubertSupport} was conjectured in \cite[Conjecture 5.1]{MTY}
and  proved in \cite{FMS} using an explicit description of flagged Schur modules.
Here we give an alternative proof based on Theorem \ref{SchubertComplement}.
A similar argument can be used more generally to show that the supports of single quiver polynomials appearing in \cite[Section 17.4]{MillerSturmfels} are $\mathrm{M}$-convex.

\begin{proof}
By Theorem \ref{SchubertComplement},  the support of $\mathfrak{S}_w^\vee$ is $\mathrm{M}$-convex.
It is straightforward to check using the definition of $\mathrm{M}$-convexity the general fact that, if the support of $h(x_1,\ldots,x_n)$ is $\mathrm{M}$-convex, then the support of $x^\mu h(x_1^{-1},\ldots,x_n^{-1})$ is $\mathrm{M}$-convex for any monomial $x^\mu$ divisible by all monomials in the support of $h$.\footnote{The general fact extends matroid duality \cite[Chapter 2]{Oxley}, which is the special case $\mu=(1,\ldots,1)$.}
\end{proof}

\begin{proposition}\label{PropositionAvoidance}
Conjecture \ref{ConjectureSchubert}  holds
when  $w\in \mathcal{S}_n$ avoids the patterns $1423$ and $1432$.
\end{proposition}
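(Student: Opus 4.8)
The plan is to show that for permutations $w \in \mathcal{S}_n$ avoiding both $1423$ and $1432$, the Schubert polynomial $\mathfrak{S}_w$ has a product structure that forces $\mathrm{N}(\mathfrak{S}_w)$ to be Lorentzian. Recall (work of Wachs, and Billey--Jockusch--Stanley, cf. also Fan--Guo--Sun) that a permutation $w$ avoids $1423$ and $1432$ precisely when $\mathfrak{S}_w$ factors as a product of factors of a very simple shape — each associated to a ``dominant piece'' glued to chains — and in the cleanest formulation, $w$ avoids $\{1423,1432\}$ iff $\mathfrak{S}_w$ is a product of Schubert polynomials indexed by permutations whose Rothe diagrams are very small, ultimately reducing to a product of linear forms $(x_{a}+x_{a+1}+\cdots+x_{b})$ together with single variables $x_i$ (this is the class where $\mathfrak{S}_w$ is a ``flagged skew Schur polynomial of hook-free shape'', or equivalently where the Fomin--Stanley type factorization into monomials and elementary-symmetric-in-a-window factors holds). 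So the first step is to invoke this pattern-avoidance characterization to write
\[
\mathfrak{S}_w(x_1,\ldots,x_n)=\prod_{k} e_{r_k}(x_{a_k},x_{a_k+1},\ldots,x_{b_k}),
\]
a product of elementary symmetric polynomials each taken in a contiguous block of variables (with monomials $x_i$ being the case $e_1$ of a single-variable block, appropriately interpreted).

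The second step is to observe that each factor $e_{r}(x_a,\ldots,x_b)$, after normalization, is Lorentzian. Indeed $\mathrm{N}(e_r(x_a,\ldots,x_b))$ is (up to the positive scalar $1/r!$) the polynomial $\sum_{S} \binom{r}{?}\cdots$ — more precisely $\mathrm{N}(e_r(y_1,\ldots,y_p)) = \frac{1}{r!}\sum_{|T|=r, T\subseteq [p]} \prod_{i\in T} y_i \cdot r! / 1 $, which is just $\frac{1}{1}\sum_{|T|=r}\prod_{i \in T} y_i$ divided by nothing problematic; in any case $e_r$ itself is well known to be Lorentzian (it is the basis generating polynomial of the uniform matroid $U_{r,p}$, which is Lorentzian by \cite[Section 10]{Lorentzian} or directly since uniform matroids have the half-plane property). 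Since a product of Lorentzian polynomials in disjoint, or even overlapping, sets of variables is Lorentzian by \cite[Corollary 6.8]{Lorentzian} (for the normalized product) together with \cite[Corollary 5.5]{Lorentzian} (for products of normalized polynomials) — precisely the mechanism already used in Corollary \ref{CorollaryProduct} — we conclude that $\mathrm{N}(\prod_k e_{r_k}(\text{window}_k))$ is Lorentzian, hence so is $\mathrm{N}(\mathfrak{S}_w)$.

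The step I expect to be the genuine obstacle is the first one: pinning down the exact statement of the factorization of $\mathfrak{S}_w$ for $w$ avoiding $1423$ and $1432$ and making sure it really is a product of elementary-symmetric-type factors rather than something more complicated. The literature (Wachs's flagged Schur functions, the ``splittable'' permutations of Billey--Jockusch--Stanley, and more recent work identifying when a Schubert polynomial is a product of Schuberts) gives this, but one must be careful: a priori the factors could be flagged Schur polynomials of more general skew shapes, and one needs that the $1423$- and $1432$-avoidance cuts these down to hook-free (i.e.\ single-row or single-column) flagged shapes, whose flagged Schur polynomials are exactly the $e_r$-in-a-window and $h_r$-in-a-window polynomials. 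If the cleanest available statement only yields flagged Schur polynomials of \emph{arbitrary} skew shape in each block, then one instead invokes Theorem \ref{MainTheorem} (normalized Schur polynomials are Lorentzian) in the form already extended to flagged/skew settings via Theorem \ref{SchubertComplement} applied to the Grassmannian or dominant-type pieces, and again closes up with the product lemmas. Either way, once the factorization is in hand the rest is a routine assembly using results already proved in the paper.
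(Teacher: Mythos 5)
Your second step (products of Lorentzian polynomials are Lorentzian, and normalization preserves the property) is fine, but the first step is not: it is \emph{false} that $\{1423,1432\}$-avoidance forces $\mathfrak{S}_w$ to be a product of elementary symmetric polynomials in contiguous windows of variables. What the cited circle of results actually gives (and what the paper uses) is \cite[Corollary 5.6]{FMS2} together with \cite[Theorem 1.1]{FanGuo}: for such $w$, the Schubert polynomial equals the specialization of the generating function of a Cartesian product of matroids coming from \cite[Theorem 7]{FMS}; concretely, $\mathfrak{S}_w$ factors as a product, over the columns of the Rothe diagram, of basis generating polynomials of \emph{Schubert matroids}, which are elementary symmetric polynomials only when the cells in that column are contiguous. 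A concrete counterexample to your claimed factorization is $w=24153$, which avoids $1423$ and $1432$ (its longest decreasing subsequence has length $2$, and there is no instance of $1423$), and whose Rothe diagram has column $3$ occupied in rows $2$ and $4$; the resulting identity is
\[
\mathfrak{S}_{24153}(x_1,\ldots,x_5)=x_1x_2\,\bigl(x_1x_2+x_1x_3+x_1x_4+x_2x_3+x_2x_4\bigr),
\]
and the quadratic factor is irreducible (its symmetric matrix has rank $3$) and is not $e_2$ of any window of variables (it omits $x_3x_4$), so no factorization of the type you posit exists. Your fallback is also not available: the Lorentzian property of normalized flagged or skew Schur polynomials is \emph{not} established anywhere in the paper (it is Conjecture \ref{SkewConjecture} and the open question about flagged Schur modules), and Theorem \ref{SchubertComplement} concerns $\mathfrak{S}_w^\vee$, not these objects.

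The repair is close to what you wrote but needs the right identification of the factors: each column factor is the basis generating polynomial of a matroid, hence has $\mathrm{M}$-convex support and is Lorentzian by \cite[Theorem 7.1]{Lorentzian} (this covers the non-uniform Schubert matroids that your $e_r$ argument misses). Equivalently, and this is the paper's phrasing, one takes the generating function $f_w$ of the Cartesian product of these matroids in $n^2$ variables $x_{ij}$, which is Lorentzian because its support is $\mathrm{M}$-convex, and then applies the nonnegative linear substitution $x_{ij}\mapsto x_i$, which preserves the Lorentzian property by \cite[Theorem 2.10]{Lorentzian}; the pattern-avoidance hypothesis enters only to guarantee, via \cite[Corollary 5.6]{FMS2} and \cite[Theorem 1.1]{FanGuo}, that this specialization \emph{equals} $\mathfrak{S}_w$ rather than merely bounding it coefficientwise from above. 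One then concludes with \cite[Corollary 6.7]{Lorentzian} that $\mathrm{N}(\mathfrak{S}_w)$ is Lorentzian, exactly as in your final step. With that substitution your outline becomes essentially the paper's proof; as written, the factorization claim is the genuine gap.
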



\begin{proof}[Sketch of Proof]
By \cite[Corollary 6.7]{Lorentzian}, the Lorentzian property of $\mathfrak{S}_w$ implies that of  $\mathrm{N}(\mathfrak{S}_w)$. 
We deduce the Lorentzian property of $\mathfrak{S}_w$ from known results on Schubert and Lorentzian polynomials, for permutations avoiding $1423$ and $1432$.

It is shown in \cite[Theorem 7]{FMS} that, for any $w \in \mathcal{S}_n$, the support of $\mathfrak{S}_w$ is the set of integral points in the Minkowski sum of $n$ matroid polytopes. 
The set $\mathrm{J}_w$  of integral points in the Cartesian product of these matroid polytopes is an $\mathrm{M}$-convex subset of $\mathbb{N}^{n\times n}$, 
and hence the generating function $f_{w}$ of $\mathrm{J}_w$ is a Lorentzian polynomial in $n^2$ variables $x_{ij}$  \cite[Theorem 7.1]{Lorentzian}. 
Since any nonnegative linear change of coordinates preserves the Lorentzian property \cite[Theorem 2.10]{Lorentzian}, substituting the variables $x_{ij}$ by $x_i$ in the generating function $f_w$ gives a Lorentzian polynomial. 
According to  \cite[Corollary 5.6]{FMS2} and \cite[Theorem 1.1]{FanGuo},
this specialization of $f_{w}$ coincides with $\mathfrak{S}_w$ when $w$ avoids $1423$ and $1432$,
and thus $\mathfrak{S}_w$ is Lorentzian for such permutations.
\end{proof}


We note that the Schubert polynomials $\mathfrak{S}_{1423}$ and $\mathfrak{S}_{1432}$ are not Lorentzian.

\subsection{Degree polynomials}

Let $w<w(i,j)$ be a covering relation in the Bruhat order of  $\mathcal{S}_n$ labelled by the transposition of $i<j$ in $[n]$.
The \emph{Chevalley multiplicity} is the assignment
\[
w<w(i,j) \longmapsto  \sum_{i \le k<j} x_k,
\]
where $x_k$ are independent variables.
The \emph{degree polynomial}  of $w \in \mathcal{S}_n$ is the generating function
\[
\mathfrak{D}_w(x_1,\ldots,x_{n-1})=\sum_\mathrm{C} m_\mathrm{C}(x_1,\ldots,x_{n-1}),
\]
where the sum is over all saturated chains $\mathrm{C}$ from the identity permutation to $w$, and $m_\mathrm{C}$ is the product of Chevalley multiplicities of the covering relations in $\mathrm{C}$.
The degree polynomials were introduced by Bernstein, Gelfand, and Gelfand \cite{BGG} and studied from a combinatorial perspective by Postnikov and Stanley \cite{degreepolynomials}.

\begin{proposition}
The degree polynomial $\mathfrak{D}_w(x_1,\ldots,x_{n-1})$ is Lorentzian for any $w \in \mathcal{S}_n$.
\end{proposition}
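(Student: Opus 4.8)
The plan is to realize the degree polynomial $\mathfrak{D}_w$ as a volume polynomial on a Schubert variety, exactly parallel to the proof of Theorem \ref{SchubertComplement}. Recall that $\mathfrak{D}_w$ records the leading term behavior of the Hilbert polynomial of the Schubert variety $X_w \subseteq \mathrm{GL}_n/B$ with respect to a line bundle $\mathcal{L}_\lambda = \bigotimes_k \mathcal{L}_{\varpi_k}^{\otimes x_k}$ built from the fundamental weights; more precisely, by the work of Bernstein--Gelfand--Gelfand \cite{BGG} and Postnikov--Stanley \cite{degreepolynomials}, $\mathfrak{D}_w(x_1,\ldots,x_{n-1})$ is (up to the factorial normalization built into $\mathrm{N}$) the polynomial
\[
\frac{1}{d!}\int_{X_w} \big(x_1 \mathrm{H}_1 + \cdots + x_{n-1}\mathrm{H}_{n-1}\big)^{d}, \qquad d = \dim X_w = \ell(w),
\]
where $\mathrm{H}_k = c_1(\mathcal{L}_{\varpi_k})$ is the first Chern class of the line bundle associated to the $k$-th fundamental weight, pulled back to $X_w$. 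Indeed, the Chevalley formula \cite{BGG} computes $\mathrm{H}_k \cap [X_v]$ for a Schubert class as the sum over Bruhat covers $v \lessdot v(i,j)$ of the coefficient $\#\{i \le k < j\}$, so iterating it $d$ times and pairing with a point exactly produces the sum over saturated chains weighted by products of Chevalley multiplicities — which is the definition of $\mathfrak{D}_w$.

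Given this identification, the two hypotheses of \cite[Theorem 10.1]{Lorentzian} are immediate: the Schubert variety $X_w$ is an irreducible projective variety, and each $\mathrm{H}_k = c_1(\mathcal{L}_{\varpi_k})$ is nef because $\mathcal{L}_{\varpi_k}$ is globally generated on $\mathrm{GL}_n/B$ (it is the pullback of $\mathcal{O}(1)$ under the projection to the Grassmannian $\mathrm{Gr}(k,n)$ composed with its Plücker embedding), and nefness is preserved under restriction to the subvariety $X_w$. Therefore the volume polynomial $\mathrm{vol}_{X_w, \mathrm{H}}$ is Lorentzian, and hence so is $\mathfrak{D}_w$.

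The key steps, in order, are: (1) recall the Chevalley multiplicity formula and set up the line bundles $\mathcal{L}_{\varpi_1}, \ldots, \mathcal{L}_{\varpi_{n-1}}$ on the full flag variety $\mathrm{GL}_n/B$, noting each is nef; (2) show that intersecting the Schubert class $[X_w]$ with $(x_1\mathrm{H}_1 + \cdots + x_{n-1}\mathrm{H}_{n-1})^{\ell(w)}$ and taking degree reproduces $\ell(w)!\,\mathfrak{D}_w$, using that the Chevalley formula has precisely the coefficients $\sum_{i\le k<j}1$ appearing in the definition of $\mathfrak{D}_w$ and that the top intersection picks out the point class $[X_e]$ at the end of each saturated chain; (3) invoke \cite[Theorem 10.1]{Lorentzian} to conclude.

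The main obstacle is step (2): pinning down the precise bookkeeping so that the multinomial expansion of $(x_1\mathrm{H}_1+\cdots+x_{n-1}\mathrm{H}_{n-1})^{\ell(w)}$ matches $\mathfrak{D}_w$ on the nose, including the $1/\ell(w)!$ normalization and the fact that the operator $\mathrm{N}$ does not appear here because $\mathfrak{D}_w$ is already defined without it — one must check that the definition of $\mathfrak{D}_w$ via sums over chains is genuinely the coefficient-wise expansion of the intersection number rather than its normalized version. This is a matter of carefully citing \cite{BGG} and \cite{degreepolynomials}; once the dictionary is fixed the Lorentzian conclusion is automatic from the volume polynomial theorem. (One should also remark that, unlike the Schubert polynomial case, no "complementing" trick via $\mathrm{N}$ is needed, since $\mathfrak{D}_w$ is already homogeneous of degree $\ell(w)$ and manifestly arises as an honest volume polynomial.)
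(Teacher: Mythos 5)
Your proposal is correct and follows essentially the same route as the paper: identify $\mathfrak{D}_w$ (up to a positive normalizing constant, which is harmless for the Lorentzian property) with the volume polynomial of the Schubert variety $X_w$ with respect to the nef line bundles attached to the fundamental weights, via \cite[Proposition 4.2]{degreepolynomials}, and then apply \cite[Theorem 10.1]{Lorentzian}. The extra detail you supply — the Chevalley-formula bookkeeping and the nefness of the $\mathcal{L}_{\varpi_k}$ as pullbacks of $\mathscr{O}(1)$ from Grassmannians — is exactly what the paper delegates to the cited references.
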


\begin{proof}
Let $B$ be the group of  upper triangular matrices in $\mathrm{GL}_n(\mathbb{C})$, and let $X_w$ be the closure of the $B$-orbit of the permutation matrix corresponding to $w$ in the flag variety $\mathrm{GL}_n(\mathbb{C})/B$.
By \cite[Proposition 4.2]{degreepolynomials}, the degree polynomial of $w$ is, up to a normalizing constant, the volume polynomial of $X_w$ with respect to the  line bundles associated to the fundamental weights $\varpi_1,\ldots,\varpi_{n-1}$.
The conclusion follows from \cite[Theorem 10.1]{Lorentzian}.
\end{proof}

The same argument shows that the analogous statement holds for Weyl groups in other types.

\subsection{Skew Schur polynomials}
Let $\lambda/\nu$ be a skew Young diagram. 
The \emph{skew Schur polynomial} of $\lambda /\nu$ in $m$ variables is the generating function
\[
s_{\lambda/\nu}(x_1,\ldots,x_m) = \sum_{\mathrm{T}} \ x^{\mu(\mathrm{T})}, \quad x^{\mu(\mathrm{T})}=x_1^{\mu_1(\mathrm{T})}\cdots x_m^{\mu_m(\mathrm{T})}, 
\]
where the sum is over all Young tableaux $\mathrm{T}$ of skew shape $\lambda/\nu$ with entries from $[m]$,
and 
\[
\mu_i(\mathrm{T})=\text{the number of $i$'s among the entries of $\mathrm{T}$}, \ \  \text{for $i=1,\ldots,m$.}
\]
Are normalized skew Schur polynomials Lorentzian?
We  tested the following statement for all partitions $\lambda$ with at most $12$ boxes and at most $6$ parts.

\begin{conjecture}\label{SkewConjecture}
The polynomial $\mathrm{N}(s_{\lambda/\nu}(x_1,\ldots,x_m))$ is Lorentzian for any $\lambda/\nu$.
\end{conjecture}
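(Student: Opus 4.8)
The plan is to mimic exactly the strategy used for Theorem~\ref{MainTheorem}, realizing the normalized skew Schur polynomial as a volume polynomial on a product of projective spaces, and then invoke \cite[Theorem 10.1]{Lorentzian}. The key algebraic input is that skew Schur polynomials arise from Schubert polynomials (or from quotients of Schubert-type classes) in a way compatible with the complementation trick. Concretely, one would first record the complementation identity: if $\lambda/\nu$ sits inside an $m\times\ell$ rectangle, set $\kappa=(\ell,\dots,\ell)-(\lambda_m,\dots,\lambda_1)$ and $\tau$ the analogous complement of $\nu$, so that
\[
s_\kappa(x_1,\dots,x_m)=x_1^\ell\cdots x_m^\ell\, s_\lambda(x_1^{-1},\dots,x_m^{-1}),
\]
and more relevantly the skew Littlewood--Richardson expansion $s_{\lambda/\nu}=\sum_\rho c^\lambda_{\nu\rho}s_\rho$ dualizes to a nonnegative combination of Schur polynomials $s_{\text{complement}}$. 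The hope is that this dual combination is itself a single "Schubert-like" class cut out by rank conditions on a generic bundle map, so that \cite[Theorem 8.2]{FultonFlags} (or \cite[Example 14.3.2]{FultonIntersection} for degeneracy loci of generic sections) produces an irreducible variety $Y=Y_{\lambda/\nu}\subseteq(\mathbb{P}^{\ell})^m$ with
\[
[Y]=s_{\kappa/\tau}(\mathrm{H}_1,\dots,\mathrm{H}_m)\cap[X],\qquad \mathrm{H}_i=c_1(\pi_i^*\mathscr{O}(1)),
\]
where $s_{\kappa/\tau}$ is the Schur-positive "complementary skew" class. Then the projection-formula computation
\[
\mathrm{vol}_{Y,\mathrm{H}}(x_1,\dots,x_m)=\frac{1}{\dim Y!}\int_X s_{\kappa/\tau}(\mathrm{H})\,(x_1\mathrm{H}_1+\cdots+x_m\mathrm{H}_m)^{\dim Y}=\mathrm{N}\big(x^\mu\, s_{\lambda/\nu}(x_1,\dots,x_m)\big)
\]
for a suitable $\mu\in\mathbb{N}^m$ would finish the argument via Lemma~\ref{Translation}, just as in the proof of Theorem~\ref{MainTheorem}.

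The main obstacle, and the reason this is only a conjecture, is establishing irreducibility of the relevant degeneracy locus $Y$. For honest Schubert determinantal varieties irreducibility is a theorem (\cite{FultonFlags}, \cite{KnutsonMiller}, \cite[Section 16.4]{MillerSturmfels}), but $s_{\lambda/\nu}$ is in general a nontrivial nonnegative sum $\sum_\rho c^\lambda_{\nu\rho}s_\rho$ of several Schur polynomials, not a single one, so the naive locus is a union of components and its volume polynomial need not be Lorentzian coefficientwise. One would need either (i) a clever single variety whose fundamental class equals the skew class — for instance a flagged Schur module construction à la \cite{FMS}, or a quiver locus from \cite[Section 17.4]{MillerSturmfels} — that is provably irreducible, or (ii) a purely combinatorial route showing directly that the support of $\mathrm{N}(s_{\lambda/\nu})$ is $\mathrm{M}$-convex and that the relevant Hessians have at most one positive eigenvalue, perhaps by degenerating $s_{\lambda/\nu}$ to Schubert polynomials and using Theorem~\ref{SchubertComplement} together with the stability of the Lorentzian class under the operations in \cite[Section 6]{Lorentzian}.

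An alternative, possibly cleaner, plan is to exploit the identity expressing a skew Schur polynomial as a specialization of an ordinary Schur polynomial in doubled variables, or as a coefficient in a product of Schur polynomials: since by Corollary~\ref{CorollaryProduct} normalized products of Schur polynomials are Lorentzian, and skew Schur functions appear via $s_{\lambda/\nu}=\langle s_\lambda,\ s_\nu\cdot(-)\rangle$, one could try to extract $s_{\lambda/\nu}$ from a Lorentzian polynomial by an operation known to preserve the Lorentzian property — taking partial derivatives, setting variables equal, or applying the polarization/specialization results of \cite[Section 6]{Lorentzian}. The difficulty here is that the coproduct pairing is not literally one of the listed Lorentzian-preserving operations, so one would first have to reinterpret it as such (e.g.\ as a diagonal restriction followed by a coefficient extraction along an $\mathrm{M}$-convex slice), which again reduces to the same $\mathrm{M}$-convexity and single-positive-eigenvalue bookkeeping. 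In either route I expect the irreducibility/single-component issue to be the crux, and absent a new geometric model I would only be able to prove the conjecture in special cases — for instance when $\nu$ is a rectangle or when $\lambda/\nu$ is a disjoint union of rows or columns, where $s_{\lambda/\nu}$ reduces to a product of ordinary Schur polynomials and Corollary~\ref{CorollaryProduct} applies directly.
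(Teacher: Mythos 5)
The statement you are addressing is Conjecture~\ref{SkewConjecture}, which the paper does not prove: it is supported only by computer verification (all $\lambda$ with at most $12$ boxes and at most $6$ parts), by the case $\nu=0$ (Theorem~\ref{MainTheorem}), and by the evidence from Corollary~\ref{CorollaryProduct}. Your proposal, as you yourself concede, is likewise not a proof, and your diagnosis of why the paper's method does not extend is exactly right: $s_{\lambda/\nu}=\sum_\rho c^{\lambda}_{\nu\rho}s_\rho$ is in general a nonnegative combination of several Schur polynomials, so the natural rank-condition locus in $(\mathbb{P}^\ell)^m$ whose class is the complementary skew class is reducible, the volume polynomial becomes a sum of volume polynomials of the components, and Lorentzian polynomials are not closed under addition (already for quadratics, $x_1^2+x_2^2$ fails both the $\mathrm{M}$-convexity of the support and the one-positive-eigenvalue condition). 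So the crux you identify --- producing a single irreducible variety, or some other Lorentzian-preserving extraction, realizing $\mathrm{N}(x^\mu s_{\lambda/\nu})$ --- is precisely what is missing, and this matches the status of the statement in the paper.

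Two small corrections to your closing list of tractable special cases. The claim that $s_{\lambda/\nu}$ reduces to a product of ordinary Schur polynomials ``when $\nu$ is a rectangle'' is false: for example $s_{(3,2)/(1,1)}$ is a connected skew shape whose expansion involves more than one Schur function and admits no such factorization. The correct easy cases are: (i) the \emph{outer} shape $\lambda$ a rectangle, where $c^{\lambda}_{\nu\rho}$ is nonzero only for the complementary partition $\rho=\hat\nu$, so $s_{\lambda/\nu}=s_{\hat\nu}$ and Theorem~\ref{MainTheorem} applies; and (ii) $\lambda/\nu$ a disjoint union of straight shapes (in particular of rows and columns), where $s_{\lambda/\nu}$ is a product of ordinary Schur polynomials in the same variables and Corollary~\ref{CorollaryProduct}(1) applies. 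With those emendations, your assessment of what can currently be proved agrees with the paper's remarks following Conjecture~\ref{SkewConjecture}.
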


Theorem \ref{MainTheorem} shows that Conjecture \ref{SkewConjecture} holds when $\nu$ is zero, and Corollary \ref{CorollaryProduct} provides some further evidence.
We remark that the $\mathrm{M}$-convexity of the support of any skew Schur polynomial can be deduced from \cite[Proposition 2.9]{MTY}. 
 
\subsection{Schur $P$-polynomials}

Let $\lambda$ be a \emph{strict partition}, that is, a decreasing sequence of positive integers. 
The \emph{Schur $P$-polynomial} of $\lambda$ in $m$ variables is the generating function
\[
P_\lambda(x_1,\ldots,x_m)=\sum_\mathrm{T} \ x^{\mu(\mathrm{T})}, \quad x^{\mu(\mathrm{T})}=x_1^{\mu_1(\mathrm{T})}\cdots x_m^{\mu_m(\mathrm{T})}, 
\]
where the sum is over all marked shifted Young tableaux of shape $\lambda$ with entries from $[m]$.
See \cite[Chapter III]{Macdonald} for this and other equivalent definitions of the polynomial $P_\lambda$. 

Are normalized Schur $P$-polynomials Lorentzian?
We tested the following statement for all strict partitions $\lambda$ with $\lambda_1\leq 12$ and  at most $4$ parts.

\begin{conjecture}
The polynomial $\mathrm{N}(P_\lambda(x_1,\ldots,x_m))$ is Lorentzian for any strict partition $\lambda$.
\end{conjecture}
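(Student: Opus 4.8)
The natural strategy is to reduce the Schur $P$-polynomial case to the ordinary Schur case that we have already settled in Theorem~\ref{MainTheorem}. Recall that Schur $P$-polynomials expand positively in Schur polynomials; more precisely, there are nonnegative integers $g^\lambda_\nu$ (the coefficients in $P_\lambda = \sum_\nu g^\lambda_\nu s_\nu$) with the property that the shapes $\nu$ occurring all have $|\nu| = |\lambda|$. Since each $\mathrm{N}(s_\nu)$ is Lorentzian by Theorem~\ref{MainTheorem} and all these polynomials are homogeneous of the same degree $|\lambda|$ in the same number of variables, one is tempted to conclude immediately. The obstacle is that \emph{sums} of Lorentzian polynomials need not be Lorentzian: the support of a sum of Lorentzian polynomials need not be $\mathrm{M}$-convex, and even when it is, the Hessian signature condition can fail. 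So the bare positive expansion is not enough, and this is exactly where the real work lies.

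The way around this is to realize $\mathrm{N}(P_\lambda)$ itself as a volume polynomial, mimicking the proof of Theorem~\ref{SchubertComplement}. Schur $P$-polynomials are the natural cohomology representatives for the \emph{maximal orthogonal Grassmannian} $\mathrm{OG}(m, 2m{+}1)$ (equivalently the Lagrangian Grassmannian family, with Schur $Q$-functions): the classes $\sigma_\lambda$ indexed by strict partitions $\lambda$ fitting in the relevant staircase form a basis of $H^*(\mathrm{OG})$, with $P_\lambda$ (up to the standard power of $2$) its Chern-class polynomial representative in the variables $x_i = c_1$ of the tautological quotient line bundles. So the plan is: first, fix a strict partition $\lambda$ with $m$ parts, embed it in a large staircase $\rho_N=(N,N{-}1,\dots,1)$, and take the ``complementary'' strict partition $\kappa$ inside that staircase, with $P_\kappa(x_1,\dots,x_m) = x_1^{?}\cdots x_m^{?}\,P_\lambda(x_1^{-1},\dots,x_m^{-1})$ playing the role that $s_\kappa$ played before (one must check the precise monomial prefactor, which comes from the top class of the relevant isotropic flag bundle). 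Second, construct an irreducible subvariety $Y$ of a product of projective spaces (or of an appropriate isotropic flag bundle) whose fundamental class is $P_\kappa(\mathrm{H}_1,\dots,\mathrm{H}_m)\cap[X]$ for nef classes $\mathrm{H}_i$ pulled back from the factors — via a suitable degeneracy-locus construction in the sense of \cite[Example~14.3.2]{FultonIntersection}, using that $P_\kappa$ is a nonnegative combination of Schubert classes, hence (by generic-section transversality, or by a Kempf-type collapsing/irreducibility argument) the degeneracy locus is irreducible of the expected codimension. Third, compute $\mathrm{vol}_{Y,\mathrm{H}}$ exactly as in the proof of Theorem~\ref{SchubertComplement}: the projection formula plus $\int_X \mathrm{H}^\mu = [\mu = \text{top}]$ collapses it to $\mathrm{N}(P_\lambda(x_1,\dots,x_m))$. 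Finally, invoke \cite[Theorem~10.1]{Lorentzian}, that volume polynomials of irreducible varieties with respect to nef classes are Lorentzian, together with Lemma~\ref{Translation} to strip the monomial prefactor.

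\textbf{The main obstacle.} The crux is the geometric input: unlike the Schubert-polynomial case, where \cite{FultonFlags,KnutsonMiller,MillerSturmfels} supply an off-the-shelf irreducible degeneracy locus whose class is $\mathfrak{S}_w$, there is no equally packaged statement realizing an arbitrary $P_\kappa(\mathrm{H}_1,\dots,\mathrm{H}_m)\cap[X]$ as the class of a \emph{single irreducible} subvariety of a product of projective spaces. One needs either (i) a Pragacz/Kazarian-style isotropic degeneracy-locus formula showing that $P_\kappa$ is represented by an irreducible locus (the irreducibility being the delicate part — symmetric degeneracy loci can be reducible), or (ii) a direct argument that the nonnegative combination $\sum_\nu g^\kappa_\nu\,[\text{Schubert variety}_\nu]$ is represented by a generic section of a globally generated bundle and hence irreducible by Bertini-type reasoning adapted to the isotropic setting. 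I expect (i), via the geometry of isotropic Grassmannian bundles and the known Schur $Q$-function degeneracy formulas, to be the cleanest route, but verifying irreducibility and the exact cohomological normalization (the powers of $2$ relating $P$ and $Q$, and the precise monomial in the complementation identity) is where most of the care will be needed. If a clean irreducible model cannot be found, a fallback is to prove the weaker statement that $\mathrm{N}(P_\lambda)$ is Lorentzian by exhibiting it as a limit of strictly Lorentzian polynomials directly, using the third characterization in Definition~\ref{LorentzianDefinition}, though this seems harder to control than the volume-polynomial approach.
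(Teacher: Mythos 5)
The statement you are addressing is not proved in the paper at all: it is stated there as a conjecture, supported only by computer verification for strict partitions with $\lambda_1\le 12$ and at most $4$ parts. Your text is likewise a program rather than a proof, and you correctly identify that the decisive input is missing; so there is no complete argument to certify here. Two specific points in the program deserve attention.

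First, the complementation identity you propose to "check the precise monomial prefactor" for is actually false. Unlike the Schur case, where $\lambda\mapsto\kappa$ (complement in an $m\times\ell$ box) satisfies $s_\kappa(x_1,\ldots,x_m)=x_1^\ell\cdots x_m^\ell\, s_\lambda(x_1^{-1},\ldots,x_m^{-1})$, complementation of strict partitions inside a staircase does not correspond to inverting the variables: the complement has a different number of parts, and the supports cannot match. Concretely, in $m=2$ variables the complement of $(2)$ in the staircase $(2,1)$ is $(1)$, but $P_{(2)}(x_1,x_2)=x_1^2+2x_1x_2+x_2^2$ has three monomials in its support while $P_{(1)}(x_1,x_2)=x_1+x_2$ has two, so no monomial $x^\mu$ gives $x^\mu P_{(2)}(x_1^{-1},x_2^{-1})=P_{(1)}(x_1,x_2)$. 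What the volume-polynomial mechanism of Theorem \ref{SchubertComplement} really requires is that the monomial-complemented polynomial $g(x)=x^{(\ell,\ldots,\ell)}P_\lambda(x^{-1})$ be the class, in the nef classes $\mathrm{H}_1,\ldots,\mathrm{H}_m$ on a product of projective spaces, of a single irreducible subvariety; in the Schur case $g$ is again a Schur (hence Schubert) polynomial, which is exactly why Fulton's degeneracy loci apply, whereas for Schur $P$-polynomials $g$ is not a $P$-polynomial, so no isotropic degeneracy-locus formula applies to it directly.

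Second, even setting complementation aside, the geometric input you flag as "the main obstacle" is genuinely open and is the whole content of the problem. The Giambelli-type representatives on $\mathrm{OG}$ or $\mathrm{LG}$ evaluate $\widetilde{P}_\lambda$ (or $\widetilde{Q}_\lambda$) at the Chern roots of the tautological bundle, which are neither independent nor nef classes on a product of projective spaces, and the known symmetric/skew-symmetric and isotropic-flag degeneracy-locus formulas live on flag bundles with their own tautological classes; extracting from them an irreducible subvariety whose class is literally a $P$-polynomial in independent nef divisor classes (with irreducibility and the powers of $2$ under control) is not available off the shelf. Your preliminary observation is sound — Schur-positivity of $P_\lambda$ plus Theorem \ref{MainTheorem} does not suffice, since sums of Lorentzian polynomials need not be Lorentzian — and the overall strategy (volume polynomial plus Lemma \ref{Translation}, then \cite[Theorem 10.1]{Lorentzian}) is the natural one to try; but until an irreducible model with the correct class is produced, the conjecture remains open, and the step you would need to replace the complementation identity must be rethought rather than merely checked.
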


The $\mathrm{M}$-convexity of the support of $P_{\lambda}$ was observed in \cite[Proposition 3.5]{MTY}. 

\subsection{Grothendieck polynomials}

\emph{Grothendieck polynomials} are polynomial representatives of the Schubert classes in the Grothendieck ring introduced by Lascoux and Sch\"utzenberger \cite{LS2}.
 If $w$ is the longest permutation  $w_\circ \in \mathcal{S}_n$, then the Grothendieck polynomial of $w$ is the monomial
\[
\mathfrak{G}_{w_\circ}(x_1,\ldots,x_n)=x_1^{n-1}x_2^{n-2} \cdots  x_{n-1}^1.
\]
In general, if $w(i)>w(i+1)$ for some $i$ and $s_i$ is the adjacent transposition $(i \ i+1)$, then
\[
\mathfrak{G}_{ws_i}(x_1,\ldots,x_n)=\pi_i \mathfrak{G}_{w}(x_1,\ldots,x_n), \ \ \text{where $\pi_i=\partial_i- \partial_i x_{i+1}$.}
\]
Let $\ell(w)$ be the degree of the Schubert polynomial of $w$, 
let $d(w)$ be the degree of the Grothendieck polynomial of $w$,
and let $\mathfrak{G}_w^k$ be the degree $\ell(w)+k$ homogeneous component of the Grothendieck polynomial.

\begin{conjecture}\label{GrothendieckConjectureI}
The polynomial $(-1)^{k} \mathrm{N} (\mathfrak{G}^k_w(x_1,\ldots,x_n))$ is Lorentzian for any  $w \in \mathcal{S}_n$ and $k \in \mathbb{N}$.
\end{conjecture}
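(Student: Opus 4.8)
The first step is to dispose of the sign. By the subword complex description of Grothendieck polynomials \cite{KnutsonMiller, MillerSturmfels},
\[
\mathfrak{G}_w(x_1,\ldots,x_n)=\sum_{D}(-1)^{|D|-\ell(w)}\,x^{\mathrm{wt}(D)},
\]
the sum over all (not necessarily reduced) pipe dreams $D$ for $w$, where $\mathrm{wt}(D)_i$ counts the crosses of $D$ in row $i$. Hence $(-1)^{k}\mathfrak{G}^{k}_w=\sum_{|D|=\ell(w)+k}x^{\mathrm{wt}(D)}$ has nonnegative coefficients, and the conjecture becomes the assertion that $\mathrm{N}$ of this pipe dream generating function is Lorentzian. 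It should be emphasized at the outset that $k=0$ gives back Conjecture \ref{ConjectureSchubert}, which is open; so realistically one aims to (a) reduce the general case to Conjecture \ref{ConjectureSchubert} plus one new geometric ingredient, and (b) prove it outright in the regimes where Conjecture \ref{ConjectureSchubert} is known.

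For the $\mathrm{M}$-convexity of the support of $(-1)^{k}\mathfrak{G}^{k}_w$ — the set of row-weight vectors of pipe dreams of excess $k$ — I would adapt the proof of Proposition \ref{SchubertSupport}: either work directly with the combinatorics of the subword complex of \cite{KnutsonMiller}, now applied to faces of a fixed cardinality, or deduce it from a degeneration of the matrix Schubert variety together with the transfer of $\mathrm{M}$-convexity along the complementation $x^{\mu}h(x_1^{-1},\ldots,x_n^{-1})$ used there. I expect this to be routine once set up.

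The real content is the Hessian condition, and the plan is to run the volume-polynomial mechanism behind Theorem \ref{SchubertComplement} in $K$-theory. Let $Y_w\subseteq X=(\mathbb{P}^{n-1})^{n}$ be the irreducible variety from the proof of Theorem \ref{SchubertComplement}, and $[\mathscr{O}_{Y_w}]\in K(X)$ its structure sheaf class. Expanding $[\mathscr{O}_{Y_w}]$ in the classes $1-[\mathscr{O}(-\mathrm{H}_i)]$ and applying the same inversion and operator $\mathrm{N}$ that produced $\mathfrak{S}_w^{\vee}$ from $\mathfrak{S}_w$ should yield $\sum_{k}(-1)^{k}(\mathfrak{G}^{k}_w)^{\vee}$, whose homogeneous component matching $(\mathfrak{G}^{k}_w)^{\vee}$, times $(-1)^{k}$, should be a \emph{$K$-theoretic volume polynomial}: the polynomial recording, degree by degree, the Euler characteristic $\chi\big(Y_w,\mathscr{O}(x_1\mathrm{H}_1+\cdots+x_n\mathrm{H}_n)\big)$. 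Its $k=0$ term is the ordinary volume polynomial $\mathrm{vol}_{Y_w,\mathrm{H}}$, Lorentzian by \cite[Theorem 10.1]{Lorentzian}. What would finish the argument is a \emph{$K$-theoretic Lorentzian principle}: for $Y$ irreducible with rational singularities (matrix Schubert varieties qualify) and $\mathrm{H}_1,\ldots,\mathrm{H}_n$ nef, every homogeneous piece of the Euler-characteristic polynomial, after the sign dictated by the alternating $K$-positivity of $[\mathscr{O}_Y]$, is Lorentzian.

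Establishing that principle is, I believe, the main obstacle: no such statement is currently available, the Todd-class corrections spoil the purely intersection-theoretic input of \cite[Theorem 10.1]{Lorentzian}, and its degree-$0$ shadow is already the open Conjecture \ref{ConjectureSchubert}. Pending it, I would settle two tractable regimes. First, Grassmannian $w$ — symmetric Grothendieck polynomials — where $Y_w$ is a Grassmannian Schubert variety and set-valued tableaux give a clean monomial expansion of $(-1)^{k}\mathfrak{G}^{k}_w$, so that one can attempt the explicit $\chi$-computation paralleling the proof of Theorem \ref{MainTheorem}. Second, $w$ avoiding $1423$ and $1432$, where by \cite{FMS, FMS2, FanGuo} $\mathfrak{S}_w$ is a coordinate specialization of the generating function of an $\mathrm{M}$-convex set; here one would look for a set-valued refinement realizing each $(-1)^{k}\mathfrak{G}^{k}_w$ as a coordinate specialization of a Lorentzian polynomial after normalization, then invoke \cite[Theorem 2.10]{Lorentzian} and \cite[Corollary 6.7]{Lorentzian}.
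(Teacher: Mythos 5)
The statement you are addressing is Conjecture \ref{GrothendieckConjectureI}: the paper offers no proof of it, only supporting evidence (numerical tests, the remark that its $k=0$ case is exactly Conjecture \ref{ConjectureSchubert}, the fact that Conjecture \ref{GrothendieckConjectureII} implies it by applying $\partial/\partial z$ and setting $z=0$, and the partial knowledge of supports from \cite{AK} and \cite{EY}). Your proposal is likewise not a proof but a program, and you say so yourself: the entire Hessian part rests on a ``$K$-theoretic Lorentzian principle'' that you acknowledge is not available, and whose weakest instance already subsumes the open Conjecture \ref{ConjectureSchubert}. So the honest assessment is that there is a genuine gap, one you have correctly located; what follows are the places where the sketch is also weaker than you present it.

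First, the claim that $\mathrm{M}$-convexity of the support of $(-1)^k\mathfrak{G}^k_w$ is ``routine once set up'' overstates the situation: this is precisely \cite[Conjecture 5.1]{AK}, and at the time of this paper it is proved only for Grassmannian $w$ \cite{EY}. The argument behind Proposition \ref{SchubertSupport} does not transfer: it starts from the Lorentzian property of $\mathfrak{S}_w^\vee$ (Theorem \ref{SchubertComplement}) and then applies the complementation fact for supports, so you would first need the very $K$-theoretic Lorentzian statement you are trying to build, not merely subword-complex combinatorics. Second, even granting your $K$-theoretic principle, the geometric construction you describe produces the complemented, normalized polynomials $(\mathfrak{G}^k_w)^\vee$, not $\mathrm{N}(\mathfrak{G}^k_w)$. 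In the cohomological case the paper passes from $\mathfrak{S}_w^\vee$ to the conjecture's object only for Grassmannian permutations, using the rectangle-complementation identity $s_\kappa(x)=x_1^\ell\cdots x_m^\ell s_\lambda(x^{-1})$ together with Lemma \ref{Translation}; for general $w$ no such identity is available, which is exactly why Conjecture \ref{ConjectureSchubert} remains open even though Theorem \ref{SchubertComplement} is proved. Your plan is silent on this complement-to-direct passage, so the reduction promised in step (a) is not in place. The two ``tractable regimes'' you propose (Grassmannian $w$, and $1423$-, $1432$-avoiding $w$) are reasonable directions, but note that Proposition \ref{PropositionAvoidance} relies on the coincidence of $\mathfrak{S}_w$ with a specialization of an $\mathrm{M}$-convex generating function \cite{FMS2,FanGuo}; a ``set-valued refinement'' with the analogous property for each graded piece of $\mathfrak{G}_w$ is itself an open combinatorial problem, not a citation you can currently make.
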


The $\mathrm{M}$-convexity of the support of $\mathfrak{G}^k_w$ was conjectured in \cite[Conjecture 5.1]{AK} and proved in \cite{EY} when $w$ is a Grassmannian permutation. 
Conjecture \ref{GrothendieckConjectureI} implies Conjecture \ref{ConjectureSchubert} because the degree $\ell(w)$ homogeneous component of $\mathfrak{G}_w$ is the Schubert polynomial $\mathfrak{S}_w$.

We may strengthen Conjecture \ref{GrothendieckConjectureI} in terms of 
 the \emph{homogeneous Grothendieck polynomial} 
\[
\widetilde{\mathfrak{G}}_w(x_1,\ldots,x_n,z)\vcentcolon=\sum_{k=0}^{d(w)-\ell(w)} (-1)^{k} \mathfrak{G}^k_w(x_1,\ldots,x_n) z^{d(w)-\ell(w)-k},
\]
where $z$ is a new variable.
Are normalized homogeneous Grothendieck polynomials Lorentzian?
We tested the following statement for all permutations in $\mathcal{S}_n$ for $n \le 7$.

\begin{conjecture}\label{GrothendieckConjectureII}
The polynomial $\mathrm{N}(\widetilde{\mathfrak{G}}_w(x_1,\ldots,x_n,z))$ is Lorentzian for any $w \in \mathcal{S}_n$.
\end{conjecture}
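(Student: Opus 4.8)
The plan is to prove Conjecture \ref{GrothendieckConjectureII} by realizing the normalized homogeneous Grothendieck polynomial as a volume polynomial, mimicking the proof of Theorem \ref{SchubertComplement}. The homogeneous Grothendieck polynomial $\widetilde{\mathfrak{G}}_w(x_1,\ldots,x_n,z)$ is, by construction, the homogenization of the ordinary Grothendieck polynomial $\mathfrak{G}_w$ with respect to the new variable $z$, up to the alternating signs $(-1)^k$ which are exactly what the $K$-theoretic geometry produces. The natural geometric object is the same matrix Schubert variety $Y = Y_w \subseteq X = (\mathbb{P}^{n-1})^n$ used in the proof of Theorem \ref{SchubertComplement}, but now one should extract not its fundamental class in the Chow ring but its structure sheaf class $[\mathscr{O}_{Y_w}]$ in the $K$-theory (or the associated class in the Chow ring via the Chern character / the fact that $Y_w$ has rational singularities), since it is a classical theorem of Fulton--Lascoux and Knutson--Miller that $[\mathscr{O}_{Y_w}]$ is represented by the Grothendieck polynomial $\mathfrak{G}_w(1-e^{-\mathrm{H}_1},\ldots,1-e^{-\mathrm{H}_n})$, and the leading-order expansion of this in the $\mathrm{H}_i$ reassembles precisely the homogeneous components $\mathfrak{G}^k_w$ with the correct signs.

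The key steps, in order: First, I would recall the description of $Y_w$ from the proof of Theorem \ref{SchubertComplement} as the degeneracy locus in $X = (\mathbb{P}^{n-1})^n$ cut out by rank conditions on $\Psi_{p\times q}$, together with the facts that $Y_w$ is irreducible, Cohen--Macaulay, of the expected codimension $\ell(w)$, and has rational singularities (Knutson--Miller, \cite{KnutsonMiller}, \cite[Section 16.4]{MillerSturmfels}). Second, I would write the $K$-class $[\mathscr{O}_{Y_w}] = \mathfrak{G}_w(1-e^{-\mathrm{H}_1},\ldots,1-e^{-\mathrm{H}_n}) \cap [\mathscr{O}_X]$ and translate this into a Chow-theoretic statement: the total Chern character (equivalently, the image under $\mathrm{ch}$) of $[\mathscr{O}_{Y_w}]$ has homogeneous pieces in codimension $\ell(w)+k$ given by $(-1)^k \mathfrak{G}^k_w(\mathrm{H}_1,\ldots,\mathrm{H}_n)$ up to lower-order corrections coming from the Todd class of $Y_w$. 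The cleanest route is to introduce the extra variable $z$ by considering the class $\sum_k (-1)^k \mathfrak{G}^k_w(\mathrm{H}) z^{d(w)-\ell(w)-k}$ directly and identify it with a genuine divisor-intersection computation on a variety of dimension one more, namely a cone or projective completion of $Y_w$ inside $X \times \mathbb{P}^1$ (or inside the projectivization $\mathbb{P}(\mathscr{O}_X \oplus \bigoplus \pi_i^*\mathscr{O}(1))$) whose fundamental class is exactly the homogenized Grothendieck class. Third, once $\widetilde{\mathfrak{G}}_w(\mathrm{H}_1,\ldots,\mathrm{H}_n,\mathrm{H}_z)\cap[X']$ is identified with $[Y'_w]$ for an irreducible variety $Y'_w$ with $\mathrm{H}_1,\ldots,\mathrm{H}_n,\mathrm{H}_z$ nef, the volume-polynomial computation $\mathrm{vol}_{Y'_w,\mathrm{H}}(x_1,\ldots,x_n,z)$ runs exactly as in Theorem \ref{SchubertComplement} using $\int_{X'}\mathrm{H}^\mu = 1$ on the top class and $0$ otherwise, yielding $\mathrm{N}(\widetilde{\mathfrak{G}}_w)$ up to the translation by a monomial handled by Lemma \ref{Translation}; then \cite[Theorem 10.1]{Lorentzian} gives the Lorentzian property.

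The hard part will be Step Two: producing an honest irreducible projective variety $Y'_w$ whose fundamental class is literally $\widetilde{\mathfrak{G}}_w(\mathrm{H}_1,\ldots,\mathrm{H}_n,\mathrm{H}_z)\cap[X']$ rather than merely a $K$-class with the right Chern character. The subtlety is that the alternating-sign homogeneous components of a Grothendieck polynomial are the components of a $K$-class, and while $[\mathscr{O}_{Y_w}]$ is effective, its Chern character involves the Todd correction $\mathrm{td}(Y_w)$, so $\widetilde{\mathfrak{G}}_w$ is not obviously the class of a subvariety on the nose. One promising fix is to work with the Knutson--Miller Gröbner degeneration of $Y_w$ to a reduced union of coordinate subspaces (the subword complex / pipe dream complex): the flat family over $\mathbb{A}^1$ has special fiber a Stanley--Reisner scheme whose class accounts, component by component, for every monomial of $\mathfrak{G}_w$, and projectivizing this family over $\mathbb{P}^1$ (rather than $\mathbb{A}^1$) should produce an irreducible total space $Y'_w$ whose class in $X \times \mathbb{P}^1$ is exactly the homogenized Grothendieck polynomial evaluated on the hyperplane classes. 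Verifying irreducibility of this total space, nefness of the relevant classes, and that the arithmetic genus / multiplicity bookkeeping matches the signs $(-1)^k$ is where the real work lies; alternatively, if the $K$-theoretic identity can be pushed through via \cite[Theorem 10.1]{Lorentzian}'s known generalization to Hirzebruch--Riemann--Roch volume polynomials (or a suitable reference on Lorentzian-ness of $K$-polynomials of Cohen--Macaulay varieties), one could bypass the explicit construction, but no such statement is cited in the excerpt, so I would expect to build $Y'_w$ by hand.
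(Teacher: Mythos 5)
The paper does not prove this statement: Conjecture~\ref{GrothendieckConjectureII} is left open there, supported only by computer verification for all $w\in\mathcal{S}_n$ with $n\le 7$, so your proposal must stand on its own --- and it has two concrete gaps. The first is in your Step Two, and it is not just ``real work left to do'': the specific constructions you suggest cannot produce the class you need. In $A^*(X\times\mathbb{P}^1)\cong A^*(X)[t]/(t^2)$ the class of any subvariety has at most a linear term in $t$, whereas $\widetilde{\mathfrak{G}}_w(\mathrm{H}_1,\ldots,\mathrm{H}_n,t)$ involves powers of $t$ up to $d(w)-\ell(w)$, which exceeds $1$ for most $w$; so projectivizing the Gr\"obner degeneration over $\mathbb{P}^1$ cannot realize the homogenized Grothendieck class. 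Likewise, your assertion that the expansion of $[\mathscr{O}_{Y_w}]=\mathfrak{G}_w(1-e^{-\mathrm{H}_1},\ldots,1-e^{-\mathrm{H}_n})$ ``reassembles precisely the homogeneous components $\mathfrak{G}^k_w$ with the correct signs'' is false: substituting $1-e^{-\mathrm{H}_i}=\mathrm{H}_i-\tfrac{1}{2}\mathrm{H}_i^2+\cdots$ into a monomial mixes degrees, so the codimension-$(\ell(w)+k)$ piece of this class is a combination of $\mathfrak{G}^j_w(\mathrm{H})$ for $j\le k$ with exponential (and, after pushforward, Todd) corrections, not $(-1)^k\mathfrak{G}^k_w(\mathrm{H})$. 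No irreducible $Y'_w$ with fundamental class $\widetilde{\mathfrak{G}}_w(\mathrm{H},\mathrm{H}_z)\cap[X']$ is known, and producing one is the entire difficulty.

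The second gap is structural and would remain even if Step Two succeeded. On a product of projective spaces the volume-polynomial computation of Theorem~\ref{SchubertComplement} does not output $\mathrm{N}(f)$ for the polynomial $f$ representing $[Y]$; it outputs the normalization of the \emph{reversal} $x^{\delta}f(x_1^{-1},\ldots)$, because the coefficient of $x^\nu/\nu!$ in $\mathrm{vol}_{Y,\mathrm{H}}$ is $\int_{X'} f(\mathrm{H})\hspace{0.3mm}\mathrm{H}^\nu$, i.e.\ the coefficient of $\mathrm{H}^{\delta-\nu}$ in $f$. Lemma~\ref{Translation} only strips off a monomial factor; it does not undo the substitution $x_i\mapsto x_i^{-1}$. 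In the Schur case (Theorem~\ref{MainTheorem}) that substitution was eliminated by the rectangle-complementation identity $s_\kappa(x_1,\ldots,x_m)=x_1^{\ell}\cdots x_m^{\ell}s_\lambda(x_1^{-1},\ldots,x_m^{-1})$, which is special to Grassmannian permutations; for general $w$ there is no such identity, and this is exactly why Conjecture~\ref{ConjectureSchubert} --- the bottom-degree component $k=0$ of what you are attempting --- is still open even though Theorem~\ref{SchubertComplement} is proved. So at best your plan would establish a Grothendieck analogue of Theorem~\ref{SchubertComplement}, namely that a normalized complement $\widetilde{\mathfrak{G}}_w^{\vee}$ is Lorentzian, not Conjecture~\ref{GrothendieckConjectureII}; and since Lorentzian polynomials are not closed under this complementation operation, the conjecture does not follow from that weaker statement.
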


Conjecture \ref{GrothendieckConjectureII} implies Conjecture \ref{GrothendieckConjectureI} because taking partial derivatives and setting a variable equal to zero preserve the Lorentzian property.
We expect an analogous Lorentzian property for double Grothendieck polynomials.

\subsection{Key polynomials}

\emph{Key polynomials} were introduced by Demazure  for Weyl groups \cite{Demazure} and studied by Lascoux and Sch\"utzenberger for symmetric groups \cite{LS}.
When $\mu \in \mathbb{N}^n$ is a partition, the key polynomial of $\mu$ is the monomial
\[
\kappa_\mu(x_1,\ldots,x_n)=x^\mu=x_1^{\mu_1} \cdots x_n^{\mu_n}.
\]
If  $\mu_i<\mu_{i+1}$ for some $i$ and $s_i$ is the adjacent transposition  $(i \ i+1)$, then
\[
\kappa_\mu(x_1,\ldots,x_n)= \partial_i x_i \kappa_{\nu}, \ \ \text{where $\nu=\mu s_i=(\mu_1,\ldots,\mu_{i+1},\mu_{i},\ldots,\mu_n).$}
\]
We refer to \cite{keypolynomials} for more information about key polynomials.

Are normalized key polynomials Lorentzian?
We tested the following statement for all compositions $\mu$ with at most $12$ boxes  and at most $6$ parts.

\begin{conjecture}\label{KeyConjecture}
The polynomial $\mathrm{N}(\kappa_\mu(x_1,\ldots,x_n))$ is Lorentzian for any $\mu \in \mathbb{N}^n$.
\end{conjecture}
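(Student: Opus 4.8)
The plan is to mimic the proofs of Theorems~\ref{SchubertComplement} and~\ref{MainTheorem}: realize a monomial translate of $\mathrm{N}(\kappa_\mu)$ as the volume polynomial of an irreducible projective variety with respect to a sequence of nef divisor classes, and then conclude by \cite[Theorem 10.1]{Lorentzian} together with Lemma~\ref{Translation}. The two endpoints are already handled --- $\kappa_\mu=x^\mu$ is a monomial when $\mu$ is weakly decreasing, and $\kappa_\mu$ equals a Schur polynomial when $\mu$ is weakly increasing --- so Conjecture~\ref{KeyConjecture} interpolates the monomial and Schur cases, and the geometry of Schubert varieties and Demazure modules is the natural place to look. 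The geometric input I would aim for is the following Demazure analogue of Fulton's realization of $\mathfrak{S}_w$ by the matrix Schubert variety: \emph{every key polynomial $\kappa_\nu$ is the fundamental class $\kappa_\nu(\mathrm{H}_1,\dots,\mathrm{H}_n)\cap[X]$ of an irreducible subvariety $Z_\nu$ of a product of projective spaces $X=(\mathbb{P}^{N})^n$, with $\mathrm{H}_i=c_1(\pi_i^*\mathcal{O}(1))$, for $N$ large enough.}

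Granting this, the argument proceeds exactly as for Schur polynomials. Given $\mu$, pick $N$ large and let $\nu$ be the composition with $\kappa_\nu(x_1,\dots,x_n)=(x_1\cdots x_n)^{N}\kappa_\mu(x_n^{-1},\dots,x_1^{-1})$: such a $\nu$ exists because reflecting a composition inside a box turns a Demazure module into an opposite Demazure module, whose character is an ordinary key polynomial in the reversed variables. With $Z_\nu\subseteq X=(\mathbb{P}^{M})^n$ as above and $M\ge N$, the same bookkeeping as in the proofs of Theorems~\ref{SchubertComplement} and~\ref{MainTheorem}, using $\int_X\mathrm{H}^\rho\in\{0,1\}$, gives
\[
\mathrm{vol}_{Z_\nu,\mathrm{H}}(x_1,\dots,x_n)=\mathrm{N}\bigl(x^{(M,\dots,M)}\kappa_\nu(x_1^{-1},\dots,x_n^{-1})\bigr)=\mathrm{N}\bigl(x^{\gamma}\kappa_\mu(x_n,\dots,x_1)\bigr),
\]
where $\gamma\in\mathbb{N}^n$ and the two variable-inversions cancel, just as $s_\kappa(x_1^{-1},\dots,x_n^{-1})$ cancels against the box exponent in the proof of Theorem~\ref{MainTheorem}. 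Since $Z_\nu$ is irreducible and the $\mathrm{H}_i$ are nef, the left-hand side is Lorentzian by \cite[Theorem 10.1]{Lorentzian}; stripping off the monomial $x^\gamma$ via Lemma~\ref{Translation} and permuting the variables back, which preserves the Lorentzian property, shows that $\mathrm{N}(\kappa_\mu)$ is Lorentzian.

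The main obstacle is the geometric input of the first paragraph. For Schubert polynomials it is supplied by \cite{FultonFlags} together with the irreducibility of matrix Schubert varieties \cite[Section 16.4]{MillerSturmfels}, and for Schur polynomials one moreover uses that $s_\kappa$ is itself a Schubert polynomial; for a general key polynomial I know of no ready-made statement realizing it as the fundamental class of an \emph{irreducible} subvariety of a product of projective spaces, and producing one --- of the expected codimension, so that \cite[Theorem 10.1]{Lorentzian} applies --- is the heart of the matter. The candidates I would try are: a degeneracy locus in $(\mathbb{P}^{N})^n$ cut out by rank conditions modeled on the diagram of $\mu$, a flagged or Kohnert-type variant of the variety $Y_w$ of Theorem~\ref{SchubertComplement}; the image under a projective embedding of a Bott--Samelson resolution, using that $\kappa_\mu$ is obtained from $x^{\mathrm{sort}(\mu)}$ by the Demazure operators $\pi_i$, which are pushforwards along $\mathbb{P}^1$-bundles and hence keep the varieties irreducible; or the configuration-variety realizations of the relevant Demazure module (Magyar; Kra\'skiewicz--Pragacz), embedded by the linear system carrying its character. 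One cannot hope to shortcut via the specialization argument of Proposition~\ref{PropositionAvoidance}, since $\kappa_\mu$ itself is generally \emph{not} Lorentzian --- already $\kappa_{(0,2)}(x_1,x_2)=x_1^2+x_1x_2+x_2^2$ is the non-Lorentzian Schur polynomial of the example in Section~\ref{SectionMain} --- so the normalization is essential and must arise as an honest volume polynomial. If a direct projective model proves elusive, a fallback is to pass through a toric degeneration of the Schubert variety attached to $\mu$ (a string polytope, say), turning the weight-graded dimensions of its sections into counts of lattice points in slices of a generalized permutohedron, and then recover these as intersection numbers $\int\mathrm{H}^\nu$ on an associated projective model, invoking \cite[Theorem 10.1]{Lorentzian} after a resolution of singularities if necessary.
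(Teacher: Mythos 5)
There is nothing in the paper to compare your argument against: Conjecture \ref{KeyConjecture} is left open. The paper's only support for it is computational testing (compositions with at most $12$ boxes and $6$ parts) and the special case of weakly increasing $\mu$, where $\kappa_\mu$ is a Schur polynomial and Theorem \ref{MainTheorem} applies; the closing question about characters of flagged Schur modules makes clear the authors do not have a proof along the lines you sketch. So your proposal has to stand on its own, and as written it does not: it is a program conditional on an unproven geometric statement, and that statement is precisely the heart of the problem.

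Concretely, the gap is the claim that every key polynomial $\kappa_\nu$ is the fundamental class $\kappa_\nu(\mathrm{H}_1,\ldots,\mathrm{H}_n)\cap[X]$ of an \emph{irreducible} subvariety of $(\mathbb{P}^N)^n$ of the expected codimension. For Schubert polynomials this is supplied by Fulton's degeneracy-locus theorem \cite{FultonFlags} together with the primality of the Schubert determinantal ideal \cite[Section 16.4]{MillerSturmfels}, and you correctly identify that no analogue is available for general key polynomials; but you offer only candidate constructions (rank-condition loci modeled on the diagram, images of Bott--Samelson resolutions, Magyar-type configuration varieties, toric degenerations) without verifying for any of them either irreducibility, the expected-codimension requirement needed for \cite[Theorem 10.1]{Lorentzian}, or --- crucially --- that the class in the Chow ring of $(\mathbb{P}^N)^n$ is exactly $\kappa_\nu(\mathrm{H})$ rather than some other polynomial with related support. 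Nonnegativity of coefficients and $\mathrm{M}$-convexity of the support (known from \cite{FMS}) do not by themselves produce an irreducible representative of a given effective class, so the existence claim cannot be waved through. A secondary point: the complementation identity you invoke, expressing $(x_1\cdots x_n)^N\kappa_\mu(x_n^{-1},\ldots,x_1^{-1})$ as another key polynomial, also needs a proof or a precise reference (it should follow from the relation between Demazure and opposite Demazure modules), though this is a much smaller issue. Everything else in your argument --- the projection-formula bookkeeping, Lemma \ref{Translation}, and the stability of the Lorentzian property under permuting variables --- is fine, but it all rests on the missing geometric realization, so the conjecture remains open under your approach.
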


Theorem \ref{MainTheorem} shows that Conjecture \ref{KeyConjecture} holds when $\mu$ is a weakly increasing sequence of nonnegative integers, because in this case the key polynomial of $\mu$ is a Schur polynomial.
The $\mathrm{M}$-convexity of the supports of key polynomials was conjectured in  \cite[Conjecture 3.13]{MTY} and proved in \cite{FMS}.

We remark that key polynomials \cite{Demazure} and Schubert polynomials \cite{KP} are both characters of \emph{flagged Schur modules}.\footnote{Flagged Schur modules are representations of the group of upper triangular matrices in $\mathrm{GL}_n(\mathbb{C})$ labelled by \emph{diagrams}. They are also called flagged dual Weyl modules, and, in special cases, key modules. We refer to  \cite[Section 5]{keypolynomials} and  \cite[Section 4]{Magyar}   for expositions.}
It is shown in \cite[Theorem 11]{FMS} that
the character of any flagged Schur module has $\mathrm{M}$-convex support.
Are normalized characters of flagged Schur modules  Lorentzian?


\begin{bibdiv}
\begin{biblist}

\bib{AOGV18}{article}{
  author = {Anari, Nima},
  author = {Oveis Gharan, Shayan},
  author = {Vinzant, Cynthia}
  title     = {Log-Concave polynomials I: entropy, and a deterministic approximation
              algorithm for counting bases of matroids},
 eprint    = {arXiv:1807.00929},
  	date={2018}
}

\bib{ALOGV18a}{article}{
  author = {Anari, Nima},
  author = {Liu, Kuikui},
  author = {Oveis Gharan, Shayan},
  author = {Vinzant, Cynthia},
  title     = {Log-Concave polynomials II: High-Dimensional Walks and an FPRAS for Counting Bases of a Matroid},
  eprint    = {arXiv:1811.01816}
  	date={2018}
}

\bib{ALOGV18b}{article}{
  author = {Anari, Nima},
  author = {Liu, Kuikui},
  author = {Oveis Gharan, Shayan},
  author = {Vinzant, Cynthia},
  title     = {Log-Concave polynomials III: Mason's ultra-log-concavity conjecture for independent sets of matroids},
  eprint    = {arXiv:1811.01600},
  	date={2018}
}

\bib{BaldoniVergne}{article}{
   author={Baldoni, Welleda},
   author={Vergne, Mich\`ele},
   title={Kostant partitions functions and flow polytopes},
   journal={Transform. Groups},
   volume={13},
   date={2008},
   number={3-4},
   pages={447--469},
}

\bib{BES}{article}{
	author={Backman, Spencer},
	author={Eur, Christopher},
	author={Simpson, Connor}
	title={Simplicial generation of Chow rings of matroids},
	eprint={arXiv:1905.07114},
	date={2019}
}

\bib{BGG}{article}{
   author={Bern\v{s}te\u{\i}n, Joseph N.},
   author={Gel\cprime fand, Israel M.},
   author={Gel\cprime fand, Sergei I.},
   title={Schubert cells and the cohomology of the spaces $G/P$},
   journal={Russian Mathematical Surveys},
   volume={28},
   date={1973},
   pages={1--26},
}

\bib{BGR}{article}{
   author={Billey, Sara},
   author={Guillemin, Victor},
   author={Rassart, Etienne},
   title={A vector partition function for the multiplicities of
   $\germ{sl}_k\Bbb C$},
   journal={J. Algebra},
   volume={278},
   date={2004},
   number={1},
   pages={251--293},
}

\bib{BH18}{article}{
	author={Br\"and\'en, Petter},
	author={Huh, June},
	title={Hodge-Riemann relations for Potts model partition functions},
	eprint={arXiv:1811.01696},
	date={2018}
}

\bib{Lorentzian}{article}{
	author={Br\"and\'en, Petter},
	author={Huh, June},
	title={Lorentzian polynomials},
	eprint={arXiv:1902.03719},
	date={2019}
}

\bib{Cauchy}{article}{
   author={Cauchy, Augustin Louis},
   title={M\'emoire sur les fonctions qui ne peuvent obtenir que deux valeurs \'egales et de signes contraires par suite des transpositions op\'er\'ees entre les variables qu'elles renferment},
   journal={Journal de l'Ecole polytechnique},
   volume={10},
   date={1815},
   book={
      series={\OE uvres compl\`etes. Series 2. Volume 1},
   note={Reprint of the 1905 original},
   publisher={Cambridge University Press, Cambridge},
   date={2009},
   }
}


\bib{CHJ}{article}{
   author={Chindris, Calin},
   author={Derksen, Harm},
   author={Weyman, Jerzy},
   title={Counterexamples to Okounkov's log-concavity conjecture},
   journal={Compos. Math.},
   volume={143},
   date={2007},
   number={6},
   pages={1545--1557},
}

\bib{Demazure}{article}{
   author={Demazure, Michel},
   title={Une nouvelle formule des caract\`eres},
   journal={Bull. Sci. Math. (2)},
   volume={98},
   date={1974},
   number={3},
   pages={163--172},
}


\bib{EY}{article}{
   author={Escobar, Laura},
   author={Yong, Alexander},
   title={Newton polytopes and symmetric Grothendieck polynomials},
   language={English, with English and French summaries},
   journal={C. R. Math. Acad. Sci. Paris},
   volume={355},
   date={2017},
   number={8},
   pages={831--834},
}

\bib{EH19}{article}{
	author={Eur, Christopher},
	author={Huh, June},
	title={Logarithmic concavity for morphisms of matroids},
	eprint={arXiv:1906.00481},
	date={2019}
}

\bib{FanGuo}{article}{
	author = {Fan, Neil J.~Y.},
	author = {Guo, Peter L.},
	title  = {Upper bounds of Schubert polynomials},
	eprint = {arXiv:1909.07206},
	date={2019}
}

\bib{FMS}{article}{
   author={Fink, Alex},
   author={M\'{e}sz\'{a}ros, Karola},
   author={{St. Dizier}, Avery},
   title={Schubert polynomials as integer point transforms of generalized
   permutahedra},
   journal={Adv. Math.},
   volume={332},
   date={2018},
   pages={465--475},
}

\bib{FMS2}{article}{
	author={Fink, Alex},
	author={M\'{e}sz\'{a}ros, Karola},
	author={{St. Dizier}, Avery},
	title  = {Zero-one Schubert polynomials},
	eprint = {arXiv:1903.10332},
	date={2019}
}

\bib{FultonHarris}{book}{
   author={Fulton, William},
   author={Harris, Joe},
   title={Representation theory},
   series={Graduate Texts in Mathematics},
   volume={129},
   note={A first course;
   Readings in Mathematics},
   publisher={Springer-Verlag, New York},
   date={1991},
}

\bib{FultonFlags}{article}{
   author={Fulton, William},
   title={Flags, Schubert polynomials, degeneracy loci, and determinantal
   formulas},
   journal={Duke Math. J.},
   volume={65},
   date={1992},
   number={3},
   pages={381--420},
}

\bib{FultonYoung}{book}{
   author={Fulton, William},
   title={Young tableaux},
   series={London Mathematical Society Student Texts},
   volume={35},
   note={With applications to representation theory and geometry},
   publisher={Cambridge University Press, Cambridge},
   date={1997},
}

\bib{FultonIntersection}{book}{
   author={Fulton, William},
   title={Intersection theory},
   series={Ergebnisse der Mathematik und ihrer Grenzgebiete. 3. Folge. A
   Series of Modern Surveys in Mathematics [Results in Mathematics and
   Related Areas. 3rd Series. A Series of Modern Surveys in Mathematics]},
   volume={2},
   edition={2},
   publisher={Springer-Verlag, Berlin},
   date={1998},
}	

\bib{Graham}{article}{
   author={Graham, William},
   title={Logarithmic convexity of push-forward measures},
   journal={Invent. Math.},
   volume={123},
   date={1996},
   number={2},
   pages={315--322},
}

\bib{Gurvits}{article}{
   author={Gurvits, Leonid},
   title={On multivariate Newton-like inequalities},
   conference={
      title={Advances in Combinatorial Mathematics},
   },
   book={
      publisher={Springer, Berlin},
   },
   date={2009},
   pages={61--78},
}

\bib{Heckman}{article}{
   author={Heckman, Gert J.},
   title={Projections of orbits and asymptotic behavior of multiplicities
   for compact connected Lie groups},
   journal={Invent. Math.},
   volume={67},
   date={1982},
   number={2},
   pages={333--356},
}

\bib{Humphreys}{book}{
   author={Humphreys, James E.},
   title={Representations of semisimple Lie algebras in the BGG category
   $\scr{O}$},
   series={Graduate Studies in Mathematics},
   volume={94},
   publisher={American Mathematical Society, Providence, RI},
   date={2008},
   pages={xvi+289},
}

\bib{Karshon}{article}{
   author={Karshon, Yael},
   title={Example of a non-log-concave Duistermaat-Heckman measure},
   journal={Math. Res. Lett.},
   volume={3},
   date={1996},
   number={4},
   pages={537--540},
}

\bib{Kirillov}{article}{
   author={Kirillov, Anatol N.},
   title={An invitation to the generalized saturation conjecture},
   journal={Publ. Res. Inst. Math. Sci.},
   volume={40},
   date={2004},
   number={4},
   pages={1147--1239},
   issn={0034-5318},
   review={\MR{2105706}},
}

\bib{KnutsonMiller}{article}{
   author={Knutson, Allen},
   author={Miller, Ezra},
   title={Gr\"{o}bner geometry of Schubert polynomials},
   journal={Ann. of Math. (2)},
   volume={161},
   date={2005},
   number={3},
   pages={1245--1318},
}

\bib{Kostka}{article}{
   author={Kostka, Carl},
   title={Ueber den Zusammenhang zwischen einigen Formen von symmetrischen
   Functionen},
   journal={J. Reine Angew. Math.},
   volume={93},
   date={1882},
   pages={89--123},
}

\bib{KP}{article}{
   author={Kra\'{s}kiewicz, Witold},
   author={Pragacz, Piotr},
   title={Foncteurs de Schubert},
   language={French, with English summary},
   journal={C. R. Acad. Sci. Paris S\'{e}r. I Math.},
   volume={304},
   date={1987},
   number={9},
   pages={209--211},
}

\bib{LS2}{article}{
   author={Lascoux, Alain},
   author={Sch\"{u}tzenberger, Marcel-Paul},
   title={Symmetry and flag manifolds},
   conference={
      title={Invariant theory},
      address={Montecatini},
      date={1982},
   },
   book={
      series={Lecture Notes in Math.},
      volume={996},
      publisher={Springer, Berlin},
   },
   date={1983},
   pages={118--144},
}

\bib{LS}{article}{
   author={Lascoux, Alain},
   author={Sch\"{u}tzenberger, Marcel-Paul},
   title={Keys \& standard bases},
   conference={
      title={Invariant theory and tableaux},
      address={Minneapolis, MN},
      date={1988},
   },
   book={
      series={IMA Vol. Math. Appl.},
      volume={19},
      publisher={Springer, New York},
   },
   date={1990},
   pages={125--144},
}

\bib{Lazarsfeld}{book}{
   author={Lazarsfeld, Robert},
   title={Positivity in algebraic geometry. I},
   series={Ergebnisse der Mathematik und ihrer Grenzgebiete. 3. Folge. A
   Series of Modern Surveys in Mathematics [Results in Mathematics and
   Related Areas. 3rd Series. A Series of Modern Surveys in Mathematics]},
   volume={48},
   note={Classical setting: line bundles and linear series},
   publisher={Springer-Verlag, Berlin},
   date={2004},
}

\bib{Macdonald}{book}{
   author={Macdonald, Ian G.},
   title={Symmetric functions and Hall polynomials},
   series={Oxford Classic Texts in the Physical Sciences},
   edition={2},
   publisher={The Clarendon Press, Oxford University Press, New York},
   date={2015},
}

\bib{Magyar}{article}{
   author={Ma{gy}ar, Peter},
   title={Schubert polynomials and Bott-Samelson varieties},
   journal={Comment. Math. Helv.},
   volume={73},
   date={1998},
   number={4},
   pages={603--636},
}

\bib{Manivel}{book}{
   author={Manivel, Laurent},
   title={Symmetric functions, Schubert polynomials and degeneracy loci},
   series={SMF/AMS Texts and Monographs},
   volume={6},
   note={Translated from the 1998 French original by John R. Swallow;
   Cours Sp\'{e}cialis\'{e}s [Specialized Courses], 3},
   publisher={American Mathematical Society, Providence, RI; Soci\'{e}t\'{e}
   Math\'{e}matique de France, Paris},
   date={2001},
}

\bib{AK}{article}{
	author={M\'esz\'aros, Karola},
	author={{St. Dizier}, Avery},
	title={From generalized permutahedra to Grothendieck polynomials via flow polytopes},
	eprint={arXiv:1705.02418},
	date={2017}
}

\bib{MillerSturmfels}{book}{
   author={Miller, Ezra},
   author={Sturmfels, Bernd},
   title={Combinatorial commutative algebra},
   series={Graduate Texts in Mathematics},
   volume={227},
   publisher={Springer-Verlag, New York},
   date={2005},
   pages={xiv+417},
   isbn={0-387-22356-8},
   review={\MR{2110098}},
}

\bib{MTY}{article}{
	author={Monical, Cara},
	author={Tokcan, Neriman},
	author={Yong, Alexander},
	title={Newton polytopes in algebraic combinatorics},
	eprint={arXiv:1703.02583},
	date={2017}
}

\bib{Murota}{book}{
   author={Murota, Kazuo},
   title={Discrete convex analysis},
   series={SIAM Monographs on Discrete Mathematics and Applications},
   publisher={Society for Industrial and Applied Mathematics (SIAM),
   Philadelphia, PA},
   date={2003},
}

\bib{OkounkovWhy}{article}{
   author={Okounkov, Andrei},
   title={Why would multiplicities be log-concave?},
   conference={
      title={The orbit method in geometry and physics},
      address={Marseille},
      date={2000},
   },
   book={
      series={Progr. Math.},
      volume={213},
      publisher={Birkh\"{a}user Boston, Boston, MA},
   },
   date={2003},
   pages={329--347},
}

\bib{Oxley}{book}{
   author={Oxley, James},
   title={Matroid theory},
   series={Oxford Graduate Texts in Mathematics},
   volume={21},
   edition={2},
   publisher={Oxford University Press, Oxford},
   date={2011},
   pages={xiv+684},
}

\bib{Postnikov}{article}{
   author={Postnikov, Alexander},
   title={Permutohedra, associahedra, and beyond},
   journal={Int. Math. Res. Not. IMRN},
   date={2009},
   number={6},
   pages={1026--1106},
}

\bib{degreepolynomials}{article}{
   author={Postnikov, Alexander},
   author={Stanley, Richard P.},
   title={Chains in the Bruhat order},
   journal={J. Algebraic Combin.},
   volume={29},
   date={2009},
   number={2},
   pages={133--174},
}

\bib{keypolynomials}{article}{
   author={Reiner, Victor},
   author={Shimozono, Mark},
   title={Key polynomials and a flagged Littlewood-Richardson rule},
   journal={J. Combin. Theory Ser. A},
   volume={70},
   date={1995},
   number={1},
   pages={107--143},
}

\bib{Schneider}{book}{
   author={Schneider, Rolf},
   title={Convex bodies: the Brunn-Minkowski theory},
   series={Encyclopedia of Mathematics and its Applications},
   volume={151},
   edition={Second expanded edition},
   publisher={Cambridge University Press, Cambridge},
   date={2014},
}

\bib{Schur}{article}{
   author={Schur, Issai},
   title={\"Uber eine Klasse von Matrizen, die sich einer gegebenen Matrix zuordnen lassen},
   journal={Dieterich, Berlin; G\"ottingen},
   date={1901},
}

\bib{Wagner}{article}{
   author={Wagner, David G.},
   title={Multivariate stable polynomials: theory and applications},
   journal={Bull. Amer. Math. Soc. (N.S.)},
   volume={48},
   date={2011},
   number={1},
   pages={53--84},
}

\bib{White}{collection}{
   title={Theory of matroids},
   series={Encyclopedia of Mathematics and its Applications},
   volume={26},
   editor={White, Neil},
   publisher={Cambridge University Press, Cambridge},
   date={1986},
}

\end{biblist}
\end{bibdiv}

\end{document}